\def\makeheadbox{{%
\hbox to0pt{\vbox{\baselineskip=10dd\hrule\hbox
to\hsize{\vrule\kern3pt\vbox{\kern3pt
\hbox{\bfseries Draft for discussion }
\hbox{Date of this version: 13.04.21}
\kern3pt}\hfil\kern3pt\vrule}\hrule}%
\hss}}}
\def\cequiv{\raisebox{-1.5mm}{$\;\stackrel{\raisebox{-3.9mm}{=}}{{\sim}}\;$}}
\newtheorem{theorem}{Theorem}[section]
\newtheorem{remark}[theorem]{Remark}
\newtheorem{proposition}[theorem]{Proposition}
\newtheorem{lemma}[theorem]{Lemma}
\newtheorem{definition}[theorem]{Definition}
\newtheorem{assumption}{Assumption}
\newcounter{mnote}
\let\oldmarginpar\marginpar
\renewcommand\marginpar[1]{\-\oldmarginpar[\raggedleft\footnotesize #1]
  {\raggedright\footnotesize #1}}
\numberwithin{equation}{section}
\setlist[enumerate]{nosep}
\def\uu{\undertilde{u}}
\def\uw{\undertilde{w}}
\def\uv{\undertilde{v}}
\def\uV{\undertilde{V}}
\def\uZ{\undertilde{Z}}
\def\uH{\undertilde{H}}
\def\uP{\undertilde{P}}
\def\uphi{\undertilde{\varphi}}
\def\upsi{\undertilde{\psi}}
\def\uf{\undertilde{f}}
\def\uy{\undertilde{y}}
\def\rot{{\rm rot}}
\def\curl{{\rm curl}}
\def\dv{{\rm div}}
\begin{document}
\title[A lowest-degree strictly conservative finite element scheme]{A lowest-degree strictly conservative finite element scheme for incompressible Stokes	problem on general triangulations}

\author{Wenjia Liu}
%\address{LSEC, Institute of Computational Mathematics and Scientific/Engineering Computing, Academy of Mathematics and System Sciences, Chinese Academy of Sciences, Beijing 100190; University of Chinese Academy of Sciences, Beijing, 100049; People's Republic of China}
%\email{wjliu@lsec.cc.ac.cn}

\author{Shuo Zhang}
\address{LSEC, Institute of Computational Mathematics and Scientific/Engineering Computing, Academy of Mathematics and System Sciences, Chinese Academy of Sciences, Beijing 100190; University of Chinese Academy of Sciences, Beijing, 100049; People's Republic of China}
\email{\{wjliu,szhang\}@lsec.cc.ac.cn}

\thanks{The research is supported by NSFC (11871465) and CAS (XDB 41000000).}

\subjclass[2000]{Primary 65N12, 65N15, 65N22, 65N30, 76D05} %%

\keywords{Incompressible Stokes equations, inf-sup condition, conservative scheme, pressure-robust discretization, lowest degree}

\begin{abstract}
In this paper, we propose a finite element pair for incompressible Stokes problem. The pair uses a slightly enriched piecewise linear polynomial space for velocity and piecewise constant space for pressure, and is illustrated to be a lowest-degree conservative stable pair for the Stokes problem on general triangulations.
\end{abstract}

\maketitle

%\tableofcontents

%
%
%
\section{Introduction}
\label{sec;introduction}

For the Stokes problem, if a stable finite element pair can inherit the mass conservation, the approximation of the velocity can be independent of the pressure and the method does not suffer from the locking effect with respect to large Reynolds' numbers (cf., e.g.,~\cite{Brezzi;Fortin1991}). Over the past decade, the conservative schemes have been recognized more clearly as \emph{pressure robustness} and widely studied and surveyed in, e.g.,~\cite{Nicolas;Alexander;Philipp2019,Guzman;NeilanGeneral2014,Neilan2017,Schroeder;Lube2018}.  
This conservation is also connected to other key features like ``viscosity-independent" ~\cite{Uchiumi2019}, ``gradient-robustness" ~\cite{Linke;Merdon2020}, etc for numerical schemes. The importance of conservative schemes is also significant in, e.g., the nonlinear mechanics~\cite{Auricchio;Veiga;Lovadina;Reali2010,Auricchio2013} and the magnetohydrodynamics~\cite{Hu;Ma;Xu2017,Hiptmair;Li;Mao;Zheng2018,Hu;Xu2019}. Wide interests have been drawn to conservative schemes.  
~\\

Various conservative finite element pairs have been designed for the Stokes problem. Conforming examples include conforming elements designed for special meshes, such as $\undertilde{P}_{k}-P_{k-1}$ triangular elements for $k\geqslant 4$ on singular-vertex-free meshes~\cite{Scott;Vogelius2009} and for smaller $k$ constructed on composite grids~\cite{Arnold;Qin1992,Scott;Vogelius2009,Qin;Zhang2007,Zhang2008,XuZhang2010} and the pairs given in \cite{FALK;NEILAN;2013,Guzman;NeilanGeneral2014} which work for general triangulations and with extra smoothness requirement. An alternative method is to use $H({\rm div})$-conforming but $H^1$-nonconforming space for the velocity. A systematic approach is to add bubble-like functions onto $H(\dv)$ finite element spaces for the tangential weak continuity for the velocity. Examples along this line can be found in, e.g., \cite{Madal.K;Tai.X;Winther.R2002,Guzman.J;Neilan.M2012,Tai;Winther2006} and \cite{Xie;Xu;Xue2008}. Generally, to construct a conservative pair that works on general triangulations without special structures, cubic and higher-degree polynomials are used for the velocity. 
~\\

Recently, a new $\undertilde{P}_2-P_1$ finite element pair is proposed on general triangulations; for the velocity field, it uses piecewise quadratic $H(\dv)$ functions with enhanced tangential continuity, and for the pressure, it uses discontinuous piecewise linear functions. The pair is stable and immediately strictly conservative on general triangulations, and is of the lowest degree ever known. Meanwhile, as is pointed out in \cite{Zeng.H;Zhang.C;Zhang.S2021}, this $\undertilde{P}_2-P_1$ pair can be viewed as a smoothened reduction from the famous Brezzi-Douglas-Marini pair, and this idea can be carried on for other $H(\dv)$ pairs so that the degree of finite element pairs may be reduced further. 
~\\

In this paper, we study how low can the degree of polynomials be to construct a stable conservative pair that works on general triangulations. We begin with the reduction of the 2nd order Brezzi-Douglas-Fortin-Marini element pair to construct an auxiliary finite element pair $\uV^{\rm sBDFM}_{h0}-\mathbb{P}^{1}_{h0}$, and then a further reduction of the $\uV^{\rm sBDFM}_{h0}-\mathbb{P}^{1}_{h0}$ pair leads to a $\uV^{\rm el}_{h0}-\mathbb{P}^0_{h0}$ pair. The finally proposed pair, as the centerpiece of this paper, uses a slightly enriched linear polynomial space for the velocity and piecewise constant for the pressure, and is stable and conservative. A further reduction of this pair leads to a $\undertilde{P}_1-P_0$ pair which is constructed naturally but not stable on general triangulations, and this way we find the newly designed $\uV^{\rm el}_{h0}-\mathbb{P}^0_{h0}$ pair is one of lowest degree. We note that this $\uV^{\rm el}_{h0}-\mathbb{P}^0_{h0}$ pair is of the type ``nonconforming spline'' and can not be represented by Ciarlet's triple. However, the velocity space does admit a set of basis functions with quite tight local supports, which are clearly stated in Section \ref{sec:elpair}.
~\\ 

The main technical ingredients of the paper are two folded. One is to figure out the basis functions, the supports of which are quite different from existing finite elements. Another is to prove the stability (inf-sup condition). We mainly utilize a two-step argument. For the auxiliary pair $\uV^{\rm sBDFM}_{h0}-\mathbb{P}^{1}_{h0}$, we mainly utilize Stenberg's macroelement argument by following the procedures of \cite{Zeng.H;Zhang.C;Zhang.S2021}; then the stability of the pair $\uV^{\rm el}_{h0}-\mathbb{P}^0_{h0}$, which is a sub-pair of $\uV^{\rm sBDFM}_{h0}-\mathbb{P}^{1}_{h0}$, is proved just by inheriting the stability of the $\uV^{\rm sBDFM}_{h0}-\mathbb{P}^{1}_{h0}$. This ``reduce and inherit'' procedure can be found in, e.g., \cite{Zhang.S2020ima,Zhang.S2021+scm} where some low degree optimal schemes are designed for other problems. It can be a natural idea to generalize all technical ingredients here to other applications. 
~\\

The rest of the paper is organized as follows. In the remaining of this section, we present some standard notations. Some preliminaries on finite elements are surveyed in Section \ref{sec:pre}.  In Section~\ref{sec:sbdfm}, a smoothened BDFM element and an auxiliary stable conservative pair $\uV^{\rm sBDFM}_{h0}-\mathbb{P}^{1}_{h0}$, are established. In Section \ref{sec:bh}, a low-degree continuous nonconforming scheme for the biharmonic equation is presented.  In Section \ref{sec:elpair}, a low-degree stable conservative pair $\uV^{\rm el}_{h0}-\mathbb{P}^0_{h0}$ is constructed, while it is verified numerically in Appendix~\ref{sec:app} that a further reduction of the degree leads to an unstable pair. In Section \ref{sec:num}, some numerical experiments are reported to illustrate the effect of the schemes given in the present paper. Finally, in Section \ref{sec:conc}, some concluding remarks are given.

\subsection{Notations}

In what follows, we use $\Omega$ to denote a simply connected polygonal domain. We use $\nabla$, $\curl$, $\dv$, $\rot$, and $\nabla^2$ to denote the gradient operator, curl operator, divergence operator, rot operator, and Hessian operator, respectively. As usual, we use $H^2(\Omega)$, $H^2_0(\Omega)$, $H^1(\Omega)$, $H^1_0(\Omega)$, $H(\rot,\Omega)$, $H_0(\rot,\Omega)$, and $L^2(\Omega)$ to denote certain Sobolev spaces, and specifically, denote $\displaystyle L^2_0(\Omega):=\{w\in L^2(\Omega):\int_\Omega w dx=0\}$, $\undertilde{H}{}^1_0(\Omega):=(H^1_0(\Omega))^2$. Furthermore, we denote vector-valued quantities by $``\undertilde{~}"$, while $\uv{}^1$ and $\uv{}^2$ denote the two components of the function $\uv$. We use $(\cdot,\cdot)$ to represent $L^2$ inner product, and $\langle\cdot,\cdot\rangle$ to denote the duality between a space and its dual. Without ambiguity, we use the same notation $\langle\cdot,\cdot\rangle$ for different dualities, and it can occasionally be treated as $L^2$ inner product for certain functions. We use the subscript $``\cdot_h"$ to denote the dependence on triangulation. In particular, an operator with the subscript $``\cdot_h"$ indicates that the operation is performed cell by cell. Finally, $\cequiv$ denotes equality up to a constant. The hidden constants depend on the domain, and when triangulation is involved, they also depend on the shape regularity of the triangulation, but they do not depend on $h$ or any other mesh parameter.

The two complexes below are well known.
\begin{equation}\label{eq:derhamc}
	\{0\} \xrightarrow{\rm inc} H^1_0(\Omega) \xrightarrow{{\curl}} H_0(\dv,\Omega) \xrightarrow{{\dv}} L^2_0(\Omega) \xrightarrow{\int_\Omega\cdot} \{0\},
\end{equation}

\begin{equation}\label{eq:stokesc}
	\{0\} \xrightarrow{\rm inc} H^2_0(\Omega) \xrightarrow{{\curl}} \uH^1_0(\Omega) \xrightarrow{{\dv}} L^2_0(\Omega) \xrightarrow{\int_\Omega\cdot} \{0\}.
\end{equation}
We refer to, e.g., \cite{Arnold.D2018book,Arnold.D;Falk.R;Winther.R2006} for related discussion on more complexes and finite elements. 

The fundamental incompressible Stokes problem reads:
\begin{equation}\label{eq:Stokes eq}
	\left\{
	\begin{split}
		-\varepsilon^{2} \Delta\,\uu  + \nabla\, p & = \uf,  \quad \mbox{in} \ \Omega, \\
		\dv\, \uu & =  0, \quad \mbox{in} \ \Omega,  \\
		\uu  & = \undertilde{0}, \quad \mbox{on} \ \partial\Omega.
	\end{split}
	\right.
\end{equation}
Here $\uu$ stands for the velocity field and $p$ for the pressure field of the incompressible flow, and $\varepsilon^2$ stands for the inverse of the Reynold's number, which can be very small. Its variational formulation is to find $(\uu,p)\in \uH^1_0(\Omega)\times L^2_0(\Omega)$, such that 
\begin{equation}\label{eq:Stokes vf}
	\left\{
	\begin{aligned}
		&\varepsilon^{2}\big(\nabla\,\uu, \nabla\,\uv\big)  -( \dv\,\uv, p) && = ( \uf,\uv ), & \forall\, \uv\in \uH^1_0(\Omega), \\
		&(\dv\,\uu, q )&& = 0, & \forall\, q\in L^2_0(\Omega).
	\end{aligned}
	\right.
\end{equation}

\section{Preliminaries}
\label{sec:pre}

\subsection{Triangulations}

Let $\mathcal{T}_h$ be a shape-regular triangular subdivision of $\Omega$ with mesh size $h$, such that $\overline\Omega=\cup_{T\in\mathcal{T}_h}\overline T$. Denote by $\mathcal{T}_h$, $\mathcal{T}^i_h$, $\mathcal{E}_h$, $\mathcal{E}_h^i$, $\mathcal{E}_h^b$, $\mathcal{X}_h$, $\mathcal{X}_h^i$ and $\mathcal{X}_h^b$ the set of cells, cells with three interior edges, edges, interior edges, boundary edges, vertices, interior vertices and boundary vertices, respectively. For any edge $e\in\mathcal{E}_h$, denote by $\mathbf{n}_e$ and $\mathbf{t}_e$ the unit normal and tangential vectors of $e$, respectively. The subscript ${\cdot}_e$ can be dropped when there is no ambiguity.% brought in.

Denote
$$
\mathcal{X}_h^{b,+1}:=\{a\in\mathcal{X}_h^i,\ a\ \mbox{is\ connected\ to}\ \mathcal{X}_h^b\ by\ e\in\mathcal{E}_h^i\},\ \mbox{and}\ \ \mathcal{X}_h^{i,-1}:=\mathcal{X}_h^i\setminus\mathcal{X}_h^{b,+1};
$$
further, denote with $\mathcal{X}^{i,-(k-1)}_h\neq\emptyset$,
$$
\mathcal{X}_h^{b,+k}:=\{a\in\mathcal{X}_h^{i,-(k-1)},\ a\ \mbox{is\ connected\ to}\ \mathcal{X}_h^{b,+(k-1)}\ by\ e\in\mathcal{E}_h^i\}, \ \mbox{and}\ \ \mathcal{X}_h^{i,-k}:=\mathcal{X}_h^{i,-(k-1)}\setminus\mathcal{X}_h^{b,+k}.
$$
The smallest $k$ such that $\mathcal{X}_h^{i,-(k-1)}=\mathcal{X}_h^{b,+k}$ is called the number of layers of the triangulation.

\tikzset{
	dot/.style={
		circle, fill=black, inner sep=.6pt, outer sep=0pt
	},
	dot label/.style={
		circle, inner sep=0pt, outer sep=1pt
	},
	% style for every pics named "right angle"
	pics/right angle/.append style={
		/tikz/draw, /tikz/angle radius=5pt
	}
}
\begin{figure}[hp]
	\centering
	\includegraphics[width=6cm]{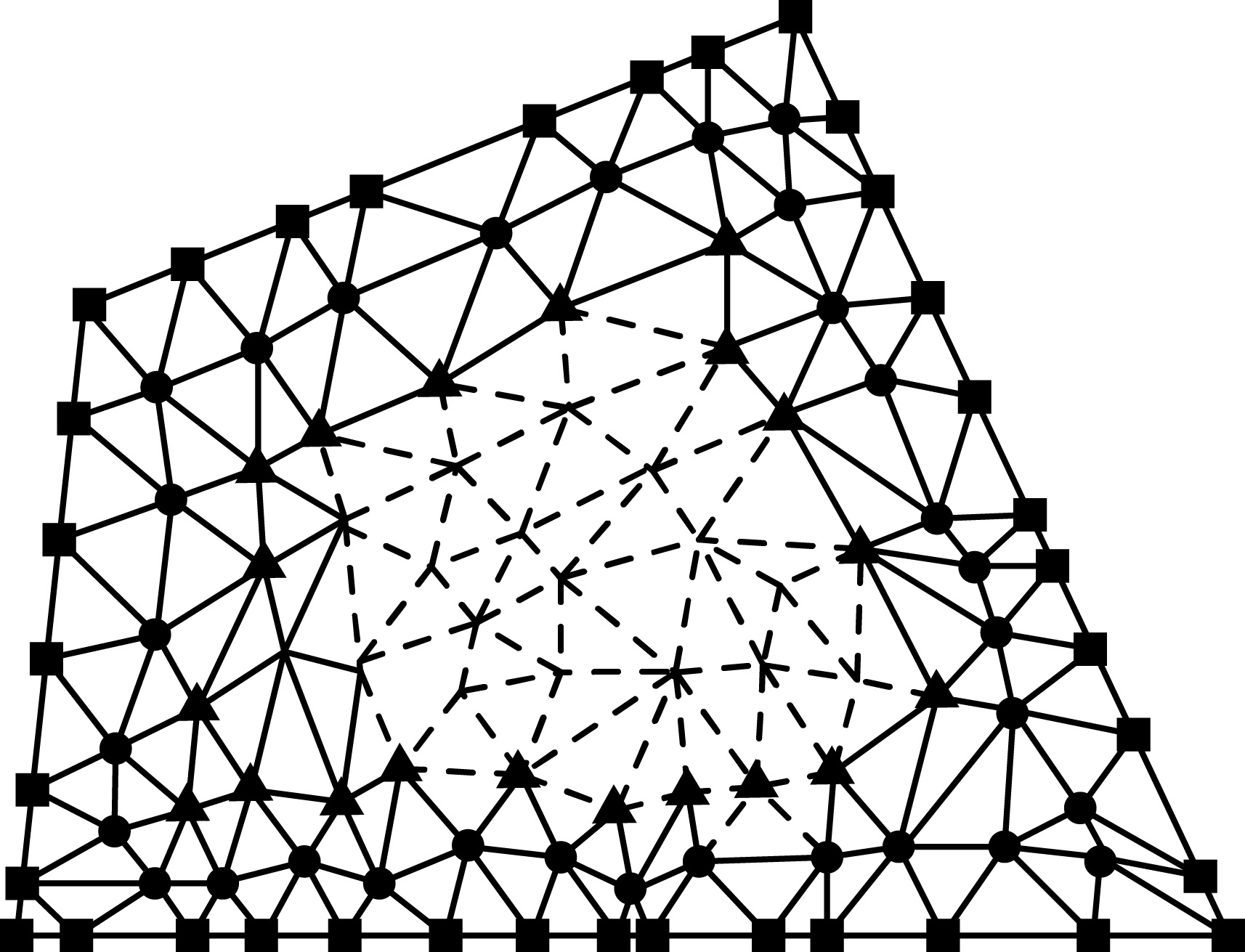}
	\hspace{0.25in}
	\label{fig:refT}
	\begin{tikzpicture}[scale=0.9]
		\path coordinate (B) at (0,0)
		coordinate (A) at (4,3)
		coordinate (C) at (5,0);
		
		\path coordinate (A1) at ($ (B)!.5!(C) $)
		coordinate (B1) at ($ (A)!.5!(C) $)
		coordinate (C1) at ($ (A)!.5!(B) $)
		coordinate (A2) at ($ (A1)!1cm!-90:(C) $)
		coordinate (B2) at ($ (B1)!1cm!-90:(A) $)
		coordinate (C2) at ($ (C1)!1cm!-90:(B) $)
		coordinate (A3) at ($ (A1)!1cm!0:(C) $)
		coordinate (B3) at ($ (B1)!1cm!0:(A) $)
		coordinate (C3) at ($ (C1)!1cm!0:(B) $)
		coordinate (A4) at ($ (A)!0.66cm!41:(C1) $)
		coordinate (B4) at ($ (B)!0.42cm!24:(A1) $)
		coordinate (C4) at ($ (C)!0.2cm!-18:(B1) $)
		coordinate (O) at ($ (A)!.6!(A1) $);
		
		\draw[line width=.4pt]  (A) -- (B) -- (C) -- cycle;
		\draw [->]    (A1) -- (A2);
		\draw [->]    (B1) -- (B2);
		\draw [->]    (C1) -- (C2);
		
		\draw [->]    (A1) -- (A3);
		\draw [->]    (B1) -- (B3);
		\draw [->]    (C1) -- (C3);
		
		\node [below] at (O) {$T$};
		
		%		\node[below] at (A3) {\textrm{\textbf{$t_1$}}};
		%        \node[below] at (A3) {\textsf{\bf $t_1$}};
		\node[below] at (A3) {$\mathbf{t}_1$};
		\node[below] at (A2) {$\mathbf{n}_1$};
		
		\node[right] at (B3) {$\mathbf{t}_2$};
		\node[right] at (B2) {$\mathbf{n}_2$};
		
		\node[left] at (C3) {$\mathbf{t}_3$};
		\node[left] at ($(B)!0.58!21:(A)$) {$\mathbf{n}_3$};

		\draw[fill] (A) circle [radius=0.02];
		\draw[fill] (B) circle [radius=0.02];
		\draw[fill] (C) circle [radius=0.02];
		
		\node[above] at (A) {$a_1$};
		\node[left] at (B) {$a_2$};
		\node[right] at (C) {$a_3$};
		
		\node[below left] at (A1) {$e_1$};
		\node[below right] at (B1) {$e_2$};
		%		\node[above] at (C1) {$e_3$};
		\node[above] at ($(B)!0.53!(A)$) {$e_3$};
		%		
		%		\node[above] at (A4) {$\alpha_i$};
		%		\node[right] at (B4) {$\alpha_j$};
		%		\node[left] at (C4) {$\alpha_k$};
	\end{tikzpicture}
	\caption{Left: vertex layers, where the $\blacksquare$'s denote boundary vertices, the $\bullet$'s denote vertices of $\mathcal{X}_h^{b,+1}$, the $\blacktriangle$'s denote vertices of $\mathcal{X}_h^{b,+2}$, and so forth. Right: a reference triangle.}\label{vertex layers and ref triangle}
\end{figure}

On a triangle $T$, locally we use $\{a_1,a_2,a_3\}$ to denote its three vertices and $\{e_1,e_2,e_3\}$ to denote three edges with unit outward normal vectors $\{\mathbf{n}_1, \mathbf{n}_2, \mathbf{n}_3\}$ and unit tangential vectors $\{\mathbf{t}_1, \mathbf{t}_2, \mathbf{t}_3\}$ such that $\mathbf{n}_i \times \mathbf{t}_i >0,i\in \{1,2,3\}$; see Figure \ref{vertex layers and ref triangle}(right) for an illustration. In addition $\{\lambda_1, \lambda_2, \lambda_3\}$ are the barycentric coordinates with respect to the three corners of $T$. Also denote the lengths of edges by $\{d_1,d_2,d_3\}$, and the area of $T$ by $S_T$ and drop the subscript when no ambiguity is brought in.

\begin{figure}[H]
	\centering
	\begin{tikzpicture}[scale=0.6]
		\path coordinate (O) at (0,0)
		coordinate (A) at (-90:3)
		coordinate (B) at (-160:3)
		coordinate (C) at (-20:3)
		coordinate (D) at (150:3.5)
		coordinate (E) at (30:3.5)
		coordinate (F) at (108:3.5)
		coordinate (G) at (72:3.5)
		coordinate (V0) at (5:0.2);
		
		\node[below,font=\small] at (V0) {$O$};
		
		\draw[line width=.4pt]  (O) -- (A) ;
		\draw[line width=.4pt]  (O) -- (B) ;
		\draw[line width=.4pt]  (O) -- (C) ;
		\draw[line width=.4pt]  (O) -- (D) -- (B) -- (A) -- (C) -- (E) -- cycle ;
		\draw[line width=.4pt]  (O) -- (F) -- (G) -- cycle ;
		\draw[dashed] (D) -- (F) ;
		\draw[dashed] (E) -- (G) ;
		
		\path coordinate (B1) at ($(A)!.5!(B)$)
		coordinate (B2) at ($(A)!.4!(C)$)
		coordinate (B3) at ($(B)!.5!(D)$)
		coordinate (B4) at ($(C)!.5!(E)$)
		coordinate (B5) at ($(F)!.5!(G)$);
		
		\path coordinate (C1) at (-93:1.6)
		coordinate (C2) at (-153:1.4)
		coordinate (C3) at (-23:1.6)
		coordinate (C4) at (150:1.8)
		coordinate (C5) at (30:2.2)
		coordinate (C6) at (106:2.1)
		coordinate (C7) at (74:2.1);

		\path coordinate (A1) at (-130:2)
		coordinate (A2) at (-40:2)
		coordinate (A3) at (-182:2)
		coordinate (A4) at (0:2)
		coordinate (A5) at (90:2.5)
		coordinate (A6) at (134:1.8)
		coordinate (A7) at (62:1.8);
		
		\node[above] at (A6) {$ \cdots $};
		\node[right] at (A7) {$ \cdots $};
		
		\path coordinate (D1) at (-92:2.5)
		coordinate (D2) at (-88:2.45)
		coordinate (D3) at (-159:2.4)
		coordinate (D4) at (-174:2.55)
		coordinate (D5) at (-10:2.6)
		coordinate (D6) at (-30:2.2)
		coordinate (D7) at (151:2.9)
		coordinate (D8) at (28:2.7)
		coordinate (D9) at (88.5:3.1)
		coordinate (D10) at (109:3.25);
	\end{tikzpicture}
	\hspace{0.5in}
	%  \label{fig:Tpatch}
	\begin{tikzpicture}[scale=0.6]
		\path coordinate (A) at (0,0)
		coordinate (B) at (60:3)
		coordinate (C) at (120:3)
		coordinate (E) at (180:3)
		coordinate (F) at (0:3)
		coordinate (D) at ($ (C)!-1!(E) $);
		
		\draw[line width=.4pt]  (A) -- (B) -- (C) -- cycle;
		\draw[line width=.4pt]  (A) -- (F) -- (B) -- (D) -- (C) -- (E)-- cycle;
		
		\path coordinate (A1) at ($ (B)!.5!(C) $)
		coordinate (B1) at ($ (A)!.5!(C) $)
		coordinate (C1) at ($ (A)!.5!(B) $)
		coordinate (A2) at ($ (A1)!.4!(C) $)
		coordinate (B2) at ($ (B1)!.4!(A) $)
		coordinate (C2) at ($ (C1)!.4!(B) $)
		coordinate (A3) at ($ (A1)!0.6cm!90:(B) $)
		coordinate (B3) at ($ (B1)!0.6cm!90:(C) $)
		coordinate (C3) at ($ (C1)!0.6cm!90:(A) $)
		coordinate (O) at ($ (A1)!0.66cm!-90:(B) $);
		
		\path   coordinate (C4) at ($ (C)!0.25cm!-10:(A) $)
		coordinate (B4) at ($ (B)!0.5cm!-10:(C) $)
		coordinate (A4) at ($ (A)!0.7cm!-10:(B) $)
		coordinate (D1) at ($ (D)!.35!(A1) $)
		coordinate (E1) at ($ (E)!.35!(B1) $)
		coordinate (F1) at ($ (F)!.35!(C1) $);
		
		\node [below,font=\small] at (O) {$T_0$};
	\end{tikzpicture}
	\caption{Illustration of an interior vertex patch(left) and an interior cell patch(right).}
	\label{fig:basissupp simple}
\end{figure}
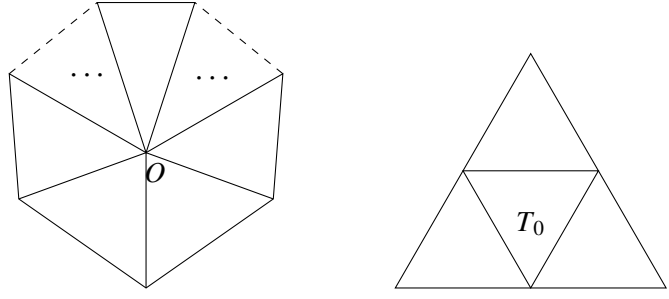
Next, we figure out two types of patches.
\begin{description}
	\item[{\bf interior vertex patch:}] for an interior vertex $O$, the cells that connects to $O$ form a (closed) interior vertex patch, denoted by $P_O$; see Figure \ref{fig:basissupp simple}(left) for an illustration;
	\item[{\bf interior cell patch:}] for an interior cell $T_0$, three neighbored cells and $T_0$ form an interior cell patch, denoted by $P_{T_0}$; see Figure \ref{fig:basissupp simple}(right) for an illustration.
\end{description} 

The number of interior vertex patches is $\# \mathcal{X}_h^i$ and the number of interior cell patches is $\# \mathcal{T}^i_h ( = 2 \# \mathcal{X}_h^i - 2 ) $. 

In the sequel, we impose a mild assumption on the grid.
\begin{assumption}\label{assumption grid}
	Every boundary vertex is connected to at least one interior vertex. 
\end{assumption}
This assumption assures every cell is covered by at least one interior vertex patch. 

\subsection{Polynomial spaces on a triangle}

For a triangle $T$, we use $P_k(T)$ to denote the set of polynomials on $K$ of degrees not higher than $k$. In a similar manner, $P_k(e)$ is defined on an edge $e$. We define $\uP{}_k(T)=(P_k(T))^2$ and similarly is $\uP{}_k(e)$ defined. 

Following \cite{Madal.K;Tai.X;Winther.R2002}, we introduce the shape function space:
$$
\uP^{\rm MTW}(T):=\{\uv\in\uP_3(T):\uv\cdot\mathbf{n}|_{e_i}\in P_1(e_i),\ i=1:3,\ \dv\, \uv\ \mbox{is\ a\ constant\ on}\ T\}.
$$
It can be verified (cf. \cite{Guzman.J;Neilan.M2012}) that
$$
\uP^{\rm MTW}(T)=\uP{}_1(T)\oplus{\rm span}\{\curl(\lambda_i^2\lambda_j\lambda_k)\}_{\{i,j,k\}=\{1,2,3\}}.
$$

Following \cite{Guzman.J;Neilan.M2012}, we introduce the shape functions space
$$
\uP^{\rm GN-1}(T)=\uP_1(T)\oplus\{\curl(\lambda_i^2\lambda_j^2\lambda_k)\}_{\{i,j,k\}=\{1,2,3\}}.
$$

We further denote
$$
\displaystyle
\uP{}^{2-}(T):=\uP{}_1(T)\oplus{\rm span}\{\lambda_i\lambda_j\mathbf{t}_k\}_{\{i,j,k\}=\{1,2,3\}},\ \ \ \mbox{and}\ \ \ 
\uP{}^{1+}(T):=\uP{}_1(T)\oplus{\rm span}\{\curl (\lambda_1\lambda_2\lambda_3)\}.
$$
It can be verified that $\uP{}^{1+}(T)\subset\uP{}^{2-}(T)$, and
$$
\uP{}^{2-}(T)=\{\uv\in\uP{}_2(T):\uv\cdot\mathbf{n}|_{e_i}\in P_1(e_i),\ i=1:3\}, \ \ \ \mbox{and} \ \ \ \uP{}^{1+}(T)=\{\uv\in \uP{}^{2-}(T):\dv\,\uv\ \mbox{is\ a\ constant\ on}\ T\}.
$$

Further we denote
$$
P^{2+}(T):=P_2(T)\oplus{\rm span}\{\lambda_1\lambda_2\lambda_3\}.
$$

\begin{lemma}\label{lem:locales}
	The two exact sequences hold:
	\begin{equation}\label{eq:localesp2-}
		\mathbb{R}  \rightarrow P^{2+}(T) \xrightarrow{{\curl}}  \uP{}^{2-}(T) \xrightarrow{{\dv}} P_1(T),
	\end{equation}
	and
	\begin{equation}\label{eq:localesp1+}
		\mathbb{R}  \rightarrow P^{2+}(T) \xrightarrow{{\curl}}  \uP{}^{1+}(T) \xrightarrow{{\dv}} P_0(T).
	\end{equation}
\end{lemma}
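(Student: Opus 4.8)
The plan is to establish each sequence from three ingredients: exactness at $P^{2+}(T)$ (the kernel of $\curl$ is exactly the constants, i.e.\ the image of $\mathbb{R}$), surjectivity of the final $\dv$ map, and a dimension count that then forces exactness in the middle. First I would record the dimensions $\dim P^{2+}(T)=7$, $\dim\uP{}^{2-}(T)=9$, $\dim\uP{}^{1+}(T)=7$, $\dim P_1(T)=3$, and $\dim P_0(T)=1$, all of which follow at once from the direct-sum descriptions, once one notes that the cubic bubble $\lambda_1\lambda_2\lambda_3$ and the three edge functions $\lambda_i\lambda_j\mathbf{t}_k$ are genuinely independent of the lower-order pieces (equivalently, the characterization $\uP{}^{2-}(T)=\{\uv\in\uP{}_2(T):\uv\cdot\mathbf{n}|_{e_i}\in P_1(e_i)\}$ removes the three quadratic normal moments from the $12$-dimensional $\uP{}_2(T)$).

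Next I would check that the $\curl$ maps are well defined into the claimed targets. Since $\curl(P_2(T))\subset\uP{}_1(T)$ and $\curl(\lambda_1\lambda_2\lambda_3)$ is by definition a generator of $\uP{}^{1+}(T)$, we obtain $\curl(P^{2+}(T))\subseteq\uP{}^{1+}(T)\subseteq\uP{}^{2-}(T)$, so both curl arrows land where claimed. Exactness at $P^{2+}(T)$ and injectivity of $\mathbb{R}\hookrightarrow P^{2+}(T)$ are then immediate: on the connected triangle $T$ a polynomial with vanishing $\curl$ is constant, so $\ker\curl=\mathbb{R}$ and $\dim\curl(P^{2+}(T))=7-1=6$ in both sequences. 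The inclusion $\mathrm{image}(\curl)\subseteq\ker(\dv)$ is automatic from $\dv\circ\curl=0$.

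The core of the argument is the surjectivity of $\dv$ onto the right-hand space. For the second sequence this is easy, since $\dv(\uP{}_1(T))$ already equals the constants $P_0(T)$ while $\dv\,\curl(\lambda_1\lambda_2\lambda_3)=0$, so $\dv(\uP{}^{1+}(T))=P_0(T)$. The first sequence is where the real work lies. I would compute $\dv(\lambda_i\lambda_j\mathbf{t}_k)=(\nabla\lambda_i\cdot\mathbf{t}_k)\lambda_j+(\nabla\lambda_j\cdot\mathbf{t}_k)\lambda_i$ and use that, along the edge $e_k=\overline{a_ia_j}$, the tangential derivatives satisfy $\nabla\lambda_i\cdot\mathbf{t}_k=\mp d_k^{-1}$ and $\nabla\lambda_j\cdot\mathbf{t}_k=\pm d_k^{-1}$, giving $\dv(\lambda_i\lambda_j\mathbf{t}_k)=\pm d_k^{-1}(\lambda_i-\lambda_j)$. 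The three resulting functions span the two-dimensional space of mean-zero linear polynomials $\mathrm{span}\{\lambda_1-\lambda_2,\lambda_1-\lambda_3\}$, and adjoining the constants coming from $\dv(\uP{}_1(T))$ yields all of $P_1(T)$; hence $\dv:\uP{}^{2-}(T)\to P_1(T)$ is onto.

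Finally I would close both sequences by dimension counting: surjectivity gives $\dim\ker(\dv)=9-3=6$ in the first sequence and $7-1=6$ in the second, matching $\dim\curl(P^{2+}(T))=6$; combined with the inclusion $\mathrm{image}(\curl)\subseteq\ker(\dv)$, equality of dimensions forces $\mathrm{image}(\curl)=\ker(\dv)$, which is exactness in the middle. The main obstacle is the explicit divergence computation for the edge bubbles together with the verification that they span the complement of the constants in $P_1(T)$; the delicate points are fixing the orientation and sign of $\mathbf{t}_k$ correctly and confirming that the three differences $\lambda_i-\lambda_j$, which satisfy one linear relation, still span a full two-dimensional space. Everything else is routine bookkeeping once the dimensions and the identity $\dv\circ\curl=0$ are in hand.
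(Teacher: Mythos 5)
Your proof is correct and follows essentially the same route as the paper: both arguments rest on the inclusion $\curl\,P^{2+}(T)\subseteq\{\uv:\dv\,\uv=0\}$ together with the dimension count $\dim(P^{2+}(T))-1=\dim(\uP{}^{2-}(T))-\dim(P_1(T))$ (and its analogue for $\uP{}^{1+}(T)$). The only difference is that where the paper cites $\dv\,\uP{}^{2-}(T)=P_1(T)$ as a well-known property of the quadratic BDFM shape-function space, you verify it directly via $\dv(\lambda_i\lambda_j\mathbf{t}_k)=(\lambda_i-\lambda_j)/d_k$, which is a correct and self-contained substitute.
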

\begin{proof}
	Noting that $\uP{}^{2-}(T)$ is exactly the local shape functions space of the quadratic Brezzi-Douglas-Fortin-Marini element, that $\dv\,\uP^{2-}(T)=P_1(T)$ is well known. Evidently $\curl\, P^{2+}(T)\subset \{\uv\in\uP^{2-}(T):\dv\,\uv=0\}$, and $\dim(\curl\, P^{2+}(T))=\dim(P^{2+}(T))-1=\dim(\uP^{2-}(T))-\dim(P_1(T))=\dim(\{\uv\in\uP^{2-}(T):\dv\,\uv=0\})$, thus $\curl\, P^{2+}(T)= \{\uv\in\uP^{2-}(T):\dv\,\uv=0\}$. The proof of \eqref{eq:localesp2-} is completed. Similarly, that $\dv\,\uP{}^{1+}(T) = P_0(T)$ follows by the definition of $\uP^{1+}(T)$, and \eqref{eq:localesp1+} can be proved the same way.
\end{proof}

Define for $i=1:3,\,\undertilde{w}_{T,e_i} := \curl(\lambda_j \lambda_k (3\lambda_i-1))$, $\undertilde{w}_{T,e_j,e_k} := \curl(\lambda_i^2)$ and $\uy_{T,e_j,e_k} := -\frac{2}{d_i} \lambda_i \mathbf{n}_i $. It holds trivially that $\dv\, \uw_{T,e_i} = 0$, $\dv\, \uw_{T,e_j,e_k} = 0$ and $\dv\, \uy_{T,e_j,e_k} = \frac{1}{S}$. 
It also indicates that $\undertilde{w}_{T,e_i}$ is a function with vanishing normal components and tangential integral on the edges $e_j,e_k$ and similar is $\undertilde{w}_{T,e_j,e_k}$ on the edge $e_i$. For instance, we refer to Figure \ref{T_ei and T_ejek} for an illustration of $\uw_{T,e_1}$ and $\uw_{T,e_2,e_3}$.
\begin{figure}[H]
	\centering
	\begin{tikzpicture}[scale=0.9]
		\path coordinate (O) at (0,0)
		coordinate (A) at (90:2)
		coordinate (B) at (-162:3)
		coordinate (C) at (-18:3);
		\path coordinate (D) at ($(B)!.5!(C)$)
		coordinate (E) at ($(A)!.45!(C)$)
		coordinate (F) at ($(A)!.43!(B)$);
		
		\draw[dashed]  (B) -- (A) -- (C) ;
		\draw[line width=.4pt] (B) -- (C) ;
		
		\node[above] at (A) {$ a_1 $};
		\node[left] at (B) {$ a_2 $};
		\node[right] at (C) {$ a_3 $};
		\node[below] at (D) {$ e_1 $};
		\node[right] at (E) {$ e_2 $};
		\node[left] at (F) {$ e_3 $};
	\end{tikzpicture}
	\hspace{0.25in}
	\begin{tikzpicture}[scale=0.9]
		\path coordinate (O) at (0,0)
		coordinate (A) at (90:2)
		coordinate (B) at (-162:3)
		coordinate (C) at (-18:3);
		
		\path coordinate (D) at ($(B)!.5!(C)$)
		coordinate (E) at ($(A)!.45!(C)$)
		coordinate (F) at ($(A)!.43!(B)$);
		
		\draw[line width=.4pt]  (B) -- (A) -- (C) ;
		\draw[dashed] (B) -- (C) ;
		
		\node[above] at (A) {$ a_1 $};
		\node[left] at (B) {$ a_2 $};
		\node[right] at (C) {$ a_3 $};
		\node[below] at (D) {$ e_1 $};
		\node[right] at (E) {$ e_2 $};
		\node[left] at (F) {$ e_3 $};
	\end{tikzpicture}
	\caption{Degrees of freedom vanish on dotted edges.}\label{T_ei and T_ejek}
%		\caption{Illustration of $w_{T,e_1}$ and $w_{T,e_2,e_3}$.}\label{T_ei and T_ejek}
\end{figure}
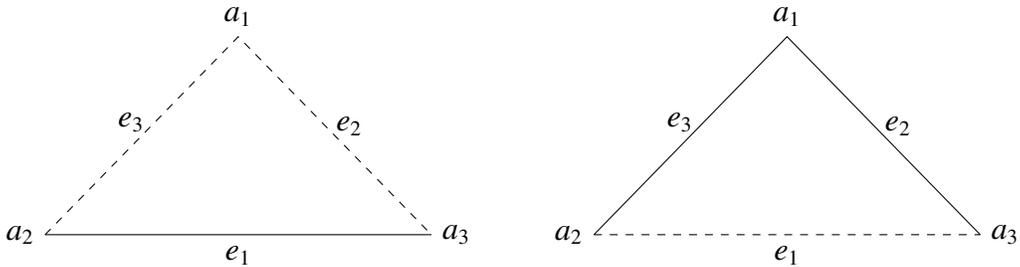
Then
\begin{multline}
	\uZ{}_T:=\{\uv\in\uP{}^{2-}(T):\dv\,\uv=0\}=\{\uv\in\uP{}^{1+}(T):\dv\,\uv=0\}
	\\
	={\rm span}\{\uw{}_{T,e_1},\uw{}_{T,e_2},\uw{}_{T,e_3},  \uw{}_{T,e_2,e_3},\uw{}_{T,e_3,e_1},\uw{}_{T,e_1,e_2} \}
\end{multline}
and
\begin{equation}\label{set P_tilde^1+}
	\uP^{1+}(T)={\rm span}\{\uw{}_{T,e_1},\uw{}_{T,e_2},\uw{}_{T,e_3},  \uw{}_{T,e_2,e_3},\uw{}_{T,e_3,e_1},\uw{}_{T,e_1,e_2}, \uy{}_{T,e_2,e_3},\uy{}_{T,e_3,e_1},\uy{}_{T,e_1,e_2}\}.
\end{equation}
Indeed, the functions of the set in (\ref{set P_tilde^1+}) are not linearly independent. Any one among $\{ \uy{}_{T,e_2,e_3},\uy{}_{T,e_3,e_1},\uy{}_{T,e_1,e_2} \}$ together with $\{\uw{}_{T,e_1},\uw{}_{T,e_2},\uw{}_{T,e_3},  \uw{}_{T,e_2,e_3},\uw{}_{T,e_3,e_1},\uw{}_{T,e_1,e_2}\}$ forms a set of independent basis of $\uP{}^{1+}(T)$. 

\subsection{Some known finite elements}

The Madal-Tai-Winther element (see \cite{Madal.K;Tai.X;Winther.R2002}) is defined by
\begin{enumerate}[(1)]
	\item $T$ is a triangle;
	\item $P_T=\uP{}^{MTW}(T)$;
	\item for any $\undertilde{v}\in (H^1(T))^2$, the nodal parameters on $T$, denoted by $D_T$, are\\
	$\{\fint_{e_i} \undertilde{v}\cdot \textbf{n}_{T,e_i} d\tau,
	\fint_{e_i} \undertilde{v}\cdot \textbf{n}_{T,e_i} (\lambda_j-\lambda_k)d\tau,
	\fint_{e_i} \undertilde{v}\cdot \textbf{t}_{T,e_i} d\tau\}_{i=1:3}$.
\end{enumerate}
Following \cite{Madal.K;Tai.X;Winther.R2002}, introduce
\begin{equation}
	\uV^{\rm MTW}_h:=\{\uv_h\in H(\dv,\Omega):\uv_h|_{T}\in \uP^{\rm MTW}(T),\ \int_{e}\uv\cdot\mathbf{t}\ \mbox{is\ continuous\ across\ interior\ edge}\ e\},
\end{equation}
and
\begin{equation}
	\uV^{\rm MTW}_{h0}:=\{\uv_h\in \uV^{\rm MTW}_h\cap H_0(\dv,\Omega):\ \int_{e}\uv\cdot\mathbf{t}=0\ \mbox{on\ boundary\ edge}\ e\}.
\end{equation}
The lowest-degree Guzman-Neilan element (see \cite{Guzman.J;Neilan.M2012}) is defined as
\begin{enumerate}[(1)]
	\item $T$ is a triangle;
	\item $P_T=\uP{}^{\rm GN-1}(T)$;
	\item for any $\undertilde{v}\in (H^1(T))^2$, the nodal parameters on $T$, denoted by $D_T$, are\\
	$\{\fint_{e_i} \undertilde{v}\cdot \textbf{n}_{T,e_i} d\tau,
	\fint_{e_i} \undertilde{v}\cdot \textbf{n}_{T,e_i} (\lambda_j-\lambda_k)d\tau,
	\fint_{e_i} \undertilde{v}\cdot \textbf{t}_{T,e_i} d\tau\}_{i=1:3}$.
\end{enumerate}
Following \cite{Guzman.J;Neilan.M2012}, introduce
\begin{equation}
	\uV^{\rm GN-1}_h:=\{\uv_h\in H(\dv,\Omega):\uv_h|_{T}\in P^{\rm GN-1}(T),\ \int_{e}\uv\cdot\mathbf{t}\ \mbox{is\ continuous\ across\ interior\ edge}\ e\},
\end{equation}
and
\begin{equation}
	\uV^{\rm GN-1}_{h0}:=\{\uv_h\in \uV^{\rm GN-1}_h\cap H_0(\dv,\Omega):\ \int_{e}\uv\cdot\mathbf{t}=0\ \mbox{on\ boundary\ edge}\ e\}.
\end{equation}
Following Zeng-Zhang-Zhang\cite{Zeng.H;Zhang.C;Zhang.S2021}, introduce
\begin{equation}
	\uV^{\rm ZZZ}_h:=\{\uv_h\in H(\dv,\Omega):\uv_h|_{T}\in \uP_2(T),\ \int_{e}\uv\cdot\mathbf{t}\ \mbox{is\ continuous\ across\ interior\ edge}\ e\},
\end{equation}
and
\begin{equation}
	\uV^{\rm ZZZ}_{h0}:=\{\uv_h\in \uV^{\rm ZZZ}_h\cap H_0(\dv,\Omega):\ \int_{e}\uv\cdot\mathbf{t}=0\ \mbox{on\ boundary\ edge}\ e\}.
\end{equation}
As revealed by \cite{Zeng.H;Zhang.C;Zhang.S2021}, the space can be viewed as a reduced Brezzi-Douglas-Marini element space with enhanced smoothness.

\subsection{Stenberg's macroelement technique for inf-sup condition (cf. \cite{Stenberg1990technique})}
A macroelement partition of $\mathcal{T}_h$, denoted by $\mathcal{M}_h$, is a set of macroelements satisfying that each triangle of $\mathcal{T}_h$ is covered by at least one macroelement in $\mathcal{M}_h$.
\begin{definition}
	Two macroelements $M_1$ and $M_2$ are said to be equivalent if there exists a continuous one-to-one mapping $G:M_1  \rightarrow M_2$, such that
	\begin{enumerate}[(a)]
		\item $G(M_1) = M_2$
		\item if $M_1=\bigcup_{i=1:m}^m T_i^1$, then $T_i^2=G(T_i^1)$ with $i=1:m$ are the cells of $M_2$.
		\item $G|_{T_i^1} = F_{T_i^2}\circ F_{T_i^1}^{-1}, i=1:m,$ where $F_{T_i^1}$ and $F_{T_i^2}$ are the mappings from a reference element $\hat{T}$ onto $T_i^1$ and $T_i^2$, respectively.
	\end{enumerate}
\end{definition}
A class of equivalent macroelements is a set of which any two macroelements are equivalent to each other.  Given a macroelement $M$, $\undertilde{V}_{h0,M}$, a subspace of $\uV_h$, consists of functions in $\uV_h$ that are equal to zero outside $M$; continuity constraints of $\uV_h$ enable corresponding nadal parameters of functions in $\undertilde{V}_{h0,M}$ to be zero on $\partial M$. 
Similarly, $Q_{h,M}$ is a subspace of $Q_h$ and it consists of functions that are equal to zero outside $M$. 
Denote
\begin{equation}\label{N_M}
	N_M:=\{q_h \in Q_{h,M}:\int_M div\ \undertilde{v}_h \ q_h \,dM=0, \forall\, \undertilde{v}_h \in \undertilde{V}_{h0,M}\}.
\end{equation}
Stenberg's macroelement technique can be summarized as the following proposition.
\begin{proposition}\label{macroelement technique}
	Suppose there exist a macroelement partitioning $\mathcal{M}_h$ with a fixed set of equivalence classes $\mathbb{E}_i$ of macroelements, $i=1,2,...,n$, a positive integer $N$ ($n$ and $N$ are independent of $h$), and an operator $\Pi:H_0^1(\Omega) \rightarrow \undertilde{V}_{h0}$, such that
	\begin{enumerate}[($C_1$)]
		\item for each $M\in \mathbb{E}_i,i=1,2,...,n$, the space $N_M$ defined in (\ref{N_M}) is one-dimensional, which consists of functions that are constant on M;
		\item each $M\in \mathcal{M}_h$ belongs to one of the classes $\mathbb{E}_i, i=1,2,...,n$;
		\item each $e \in \mathcal{E}_h^i$ is an interior edge of at least one and no more than $N$ macroelements;
		\item for any $\undertilde{w} \in \undertilde{H}_0^1(\Omega)$, it holds that\\
		$$\sum_{T\in \mathcal{T}_h} h_T^{-2} ||\undertilde{w} - \Pi \undertilde{w}||_{0,T}^2
		+ \sum_{e\in \mathcal{E}_h^i} h_e^{-1} ||\undertilde{w} - \Pi \undertilde{w}||_{0,e}^2
		\leqslant C ||\undertilde{w}||_{1,\Omega}^2 \quad and\quad ||\Pi \undertilde{w}||_{1,h} \leqslant C ||\undertilde{w}||_{1,\Omega}.$$
	\end{enumerate}
	Then the uniform inf-sup condition holds for the finite element pair.
\end{proposition}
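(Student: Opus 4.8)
The plan is to verify the uniform inf-sup condition directly, producing for each $q_h\in Q_{h0}$ a test field $\uv_h\in\uV_{h0}$ with $(\dv\,\uv_h,q_h)\gtrsim\|q_h\|_0^2$ and $\|\uv_h\|_{1,h}\lesssim\|q_h\|_0$. I would build $\uv_h$ by Verf\"urth's trick, as $\uv_h=\uv_h^{(0)}+\delta\,\uv_h^{(f)}$ with a fixed, $h$-independent $\delta>0$: a global piece $\uv_h^{(0)}$ that already captures the full norm $\|q_h\|_0^2$ up to a fine-scale defect, and a sum of local macroelement pieces $\uv_h^{(f)}$ that absorbs that defect. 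For the global piece, the exactness of \eqref{eq:stokesc} furnishes $\uw\in\uH^1_0(\Omega)$ with $\dv\,\uw=q_h$ and $\|\uw\|_{1,\Omega}\lesssim\|q_h\|_0$; setting $\uv_h^{(0)}:=\Pi\uw$ and using the stability bound in $(C_4)$ gives $\|\uv_h^{(0)}\|_{1,h}\lesssim\|q_h\|_0$, while
\[
(\dv\,\uv_h^{(0)},q_h)=\|q_h\|_0^2-\big(\dv(\uw-\Pi\uw),q_h\big).
\]
Integrating the residual by parts cell by cell (boundary contributions vanish since $\uw\in\uH^1_0(\Omega)$ and $\Pi\uw\in H_0(\dv,\Omega)$, and the interior ones collapse onto the jumps of $q_h$ because $(\uw-\Pi\uw)\cdot\mathbf{n}$ is single valued), the approximation bound in $(C_4)$ controls it by $C\|q_h\|_0\,|q_h|_\ast$, where $|q_h|_\ast^2:=\sum_{T\in\mathcal{T}_h}h_T^2\|\nabla q_h\|_{0,T}^2+\sum_{e\in\mathcal{E}_h^i}h_e\|\jump{q_h}\|_{0,e}^2$ is the natural fine-scale seminorm of the discontinuous pressure.

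The heart of the argument is a uniform local inf-sup on each macroelement. Let $P_{N_M}$ denote the $L^2(M)$-projection onto $N_M$ and set $G^2:=\sum_{M\in\mathcal{M}_h}\|q_h-P_{N_M}q_h\|_{0,M}^2$. I would show there is $c_0>0$, independent of $h$ and of $M$, such that for each $M$ there is $\uv_{h,M}\in\uV_{h0,M}$ with
\[
\int_M \dv\,\uv_{h,M}\,(q_h-P_{N_M}q_h)\ \geqslant\ c_0\,\|q_h-P_{N_M}q_h\|_{0,M}^2,\qquad \|\uv_{h,M}\|_{1,M}\ \lesssim\ \|q_h-P_{N_M}q_h\|_{0,M}.
\]
On a single $M$ this is exactly the nondegeneracy of $(\dv\,\cdot,\cdot)$ on $\uV_{h0,M}\times(Q_{h,M}/N_M)$, which holds by $(C_1)$ since $N_M$ is precisely the one-dimensional kernel. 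To make $c_0$ uniform I would invoke $(C_2)$: there are only finitely many equivalence classes, so transporting the local problem to one of finitely many reference macroelements by the affine maps $F_T$ and using shape-regularity to bound the incurred norm-equivalence constants, it suffices that the reference constant is positive, a statement about a fixed finite-dimensional pair. By the same scaling, and since $|\cdot|_\ast$ is a genuine norm on $Q_{h,M}/N_M$ (it vanishes only on functions constant throughout the connected $M$, because every interior edge of $M$ enters the seminorm), I would obtain the companion local equivalence $\sum_{T\subset M}h_T^2\|\nabla q_h\|_{0,T}^2+\sum_{e\,\mathrm{int}\,M}h_e\|\jump{q_h}\|_{0,e}^2\lesssim\|q_h-P_{N_M}q_h\|_{0,M}^2$.

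I would then assemble. Extending each $\uv_{h,M}$ by zero and summing, put $\uv_h^{(f)}:=\sum_M\uv_{h,M}$; the bounded-overlap in $(C_3)$ gives $\|\uv_h^{(f)}\|_{1,h}^2\lesssim G^2$, and since $\int_M\dv\,\uv_{h,M}=0$ the projected parts drop, leaving $(\dv\,\uv_h^{(f)},q_h)\geqslant c_0\,G^2$. Crucially, because by $(C_3)$ every interior edge is interior to at least one macroelement, summing the local equivalences of the previous step yields the global bound $|q_h|_\ast^2\lesssim G^2$, while trivially $G\lesssim\|q_h\|_0$. Testing now against $\uv_h=\uv_h^{(0)}+\delta\,\uv_h^{(f)}$,
\[
(\dv\,\uv_h,q_h)\ \geqslant\ \|q_h\|_0^2-C\|q_h\|_0\,|q_h|_\ast+\delta c_0\,G^2\ \geqslant\ \|q_h\|_0^2-C'\|q_h\|_0\,G+\delta c_0\,G^2,
\]
so Young's inequality on the cross term followed by a fixed choice of $\delta$ leaves $(\dv\,\uv_h,q_h)\gtrsim\|q_h\|_0^2$ together with $\|\uv_h\|_{1,h}\lesssim\|q_h\|_0$, which is the asserted inf-sup.

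I expect the main obstacle to be the uniform local inf-sup of the middle step: $(C_1)$ only asserts qualitative nondegeneracy on each fixed $M$, and the whole force of the macroelement technique lies in upgrading this to an $h$- and $M$-independent constant. This is where $(C_2)$ is indispensable, and the delicate point is to check that the norm equivalences incurred under the affine transport to the finitely many reference macroelements depend only on shape-regularity, so that reference-level positivity transfers without degradation as $h\to0$.
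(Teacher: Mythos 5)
The paper does not prove this proposition; it is quoted as a known result and attributed to Stenberg \cite{Stenberg1990technique}. Your argument is a correct reconstruction of the standard proof from that source: the Verf\"urth-type splitting $\uv_h=\Pi\uw+\delta\sum_M\uv_{h,M}$, the elementwise integration by parts that converts the consistency defect into the mesh-dependent seminorm $|q_h|_\ast$ controlled via $(C_4)$ (note this step uses that $\Pi\uw$ has single-valued normal traces, which holds here since $\uV_{h0}\subset H_0(\dv,\Omega)$), the local inf-sup on $\uV_{h0,M}\times(Q_{h,M}/N_M)$ from $(C_1)$, and the assembly via the covering and bounded-overlap properties $(C_2)$--$(C_3)$. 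The only point where your sketch is lighter than the original is the uniformity of the local constant: macroelements in one equivalence class are only cell-by-cell affine images of the reference, so one cannot conclude by a single scaling; Stenberg instead parametrizes each normalized equivalence class by a compact set of geometric data (using shape regularity) and argues that the local inf-sup constant is a continuous, positive function on that set, hence uniformly bounded below. You correctly identify this as the crux, and with that compactness argument supplied your proof is complete.
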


\section{An auxiliary stable pair for the Stokes problem}
\label{sec:sbdfm}
\subsection{A smoothened Brezzi-Douglas-Fortin-Marini ({\rm sBDFM}) element}
We define sBDFM element by
\begin{enumerate}[(1)]
	\item $T$ is a triangle;
	\item $P_T=\uP{}^{2-}(T)$;
	\item for any $\undertilde{v}\in (H^1(T))^2$, the nodal parameters on $T$, denoted by $D_T$, are\\
	$\{\fint_{e_i} \undertilde{v}\cdot \textbf{n}_{T,e_i} d\tau,
	\fint_{e_i} \undertilde{v}\cdot \textbf{n}_{T,e_i} (\lambda_j-\lambda_k)d\tau,
	\fint_{e_i} \undertilde{v}\cdot \textbf{t}_{T,e_i} d\tau\}_{i=1:3}$.
\end{enumerate}
The above triple is $P_T-$unisolvent. We use $\undertilde{\varphi}_{\textbf{n}_{T,e_i},0}$, $\undertilde{\varphi}_{\textbf{n}_{T,e_i},1}$, and $\undertilde{\varphi}_{\textbf{t}_{T,e_i},0}$ to represent the corresponding nodal basis functions, and then
\begin{equation}\label{FE basis}
	\left\{
	\begin{aligned}
		& \undertilde{\varphi}_{\textbf{n}_{T,e_i},0} =
		\lambda_j (3 \lambda_j - 2) \frac{\textbf{t}_k}{(\textbf{n}_i,\textbf{t}_k)}
		+ \lambda_k (3 \lambda_k - 2) \frac{\textbf{t}_j}{(\textbf{n}_i,\textbf{t}_j)}
		+ 6 \lambda_j \lambda_k \textbf{n}_i;\\
		& \undertilde{\varphi}_{\textbf{n}_{T,e_i},1} =
		3\lambda_j (3 \lambda_j - 2) \frac{\textbf{t}_k}{(\textbf{n}_i,\textbf{t}_k)}
		- 3\lambda_k (3 \lambda_k - 2) \frac{\textbf{t}_j}{(\textbf{n}_i,\textbf{t}_j)};\\
		& \undertilde{\varphi}_{\textbf{t}_{T,e_i},0} =
		6 \lambda_j \lambda_k \textbf{t}_i.
	\end{aligned}
	\right.
\end{equation}
%\paragraph{Construction of spaces}

We use $\uV^{\rm sBDFM}_h$ and $\uV^{\rm sBDFM}_{h0}$ for the corresponding finite element spaces, where the subscript $\cdot_{h0}$ implies that the nodal parameters along boundary of the domain are all zero. Evidently, $\uV_h{}^{\rm sBDFM}$ is a {\bf s}moothened subspace of the famous Brezzi-Douglas-Fortin-Marini element space. Indeed $\undertilde{V}^{\rm sBDFM}_h \subset \undertilde{H}(div,\Omega)$ but $\undertilde{V}^{\rm sBDFM}_h \nsubset \undertilde{H}^1(\Omega)$, and similar is $\uV^{\rm sBDFM}_{h0}$.

Define a nodal interpolation operator $\Pi_h:\undertilde{H}^1(\Omega)\rightarrow \undertilde{V}^{\rm sBDFM}_h$ such that for any $e \subset \mathcal{E}_h$,
\begin{equation}\label{Pi_h}\nonumber
	\fint_e (\Pi_h \undertilde{v} \cdot \textbf{n}_e) p = \fint_e (\undertilde{v} \cdot \textbf{n}_e) p, \ \forall\,p\in P_1(e)\ \ \ \mbox{and}\ \  \fint_e \Pi_h \undertilde{v} \cdot \textbf{t}_e  = \fint_e \undertilde{v} \cdot \textbf{t}_e .
\end{equation}
The operator $\Pi_h$ is locally defined on each triangle, and it preserves linear functions locally. Furthermore, the local space $\undertilde{V}_h(T)$ restricted on $T$ is invariant under the Piola's transformation, i.e., it maps $\undertilde{V}_h(T)$ onto $\undertilde{V}_h(\hat{T})$. Therefore, approximation estimates of $\Pi_h$ can be derived from standard scaling arguments and the Bramble-Hilbert lemma.

\begin{proposition}
	It holds for $0 \leq k \leq 1 \leq s \leq 3$ that
	\begin{equation}
		|\undertilde{v} - \Pi_h \undertilde{v}|_{k,h}\leqslant C h^{s-k} |\undertilde{v}|_{s,\Omega}, \quad \forall\, \undertilde{v}\in \undertilde{H}^s(\Omega).
	\end{equation}
\end{proposition}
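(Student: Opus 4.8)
The plan is to prove the estimate cell by cell and then sum over $\mathcal{T}_h$. Fix $T\in\mathcal{T}_h$ with affine map $F_T(\hat x)=B_T\hat x+b_T$ from the reference triangle $\hat T$, and let $\mathcal{P}_T$ denote the contravariant Piola transform $\mathcal{P}_T\hat{\uv}=(\det B_T)^{-1}B_T(\hat{\uv}\circ F_T^{-1})$. Since $\uP{}^{2-}(T)$ is Piola-invariant (as recorded in the excerpt), $\mathcal{P}_T$ carries $\uV_h(\hat T)$ onto $\uV_h(T)$, and under shape-regularity I will use the standard bounds $\|B_T\|\cequiv h_T$, $\|B_T^{-1}\|\cequiv h_T^{-1}$, $|\det B_T|\cequiv h_T^2$, which are what convert reference-element estimates into the correct powers of $h_T$.

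On $\hat T$ the nodal functionals are edge integrals of the normal and tangential components of $\uv$ together with one weighted normal moment; by the trace theorem each is a bounded linear functional on $\uH^1(\hat T)$, and since $\uV_h(\hat T)$ is finite dimensional the local interpolation $\hat\Pi:\uH^1(\hat T)\to\uV_h(\hat T)$ is bounded in every norm. The operator reproduces $\uP{}_1(\hat T)$ (preservation of linear functions), so for any $\hat{\uv}\in\uH^s(\hat T)$ and any $\hat{\up}\in\uP{}_1(\hat T)$ I can write $\hat{\uv}-\hat\Pi\hat{\uv}=(I-\hat\Pi)(\hat{\uv}-\hat{\up})$ and obtain $|\hat{\uv}-\hat\Pi\hat{\uv}|_{k,\hat T}\leqslant C\inf_{\hat{\up}\in\uP{}_1}\|\hat{\uv}-\hat{\up}\|_{1,\hat T}$ for $0\leqslant k\leqslant 1$. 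The Bramble--Hilbert (Deny--Lions) lemma then bounds the infimum by $C|\hat{\uv}|_{s,\hat T}$, with $s$ ranging up to one more than the degree of the polynomials that $\hat\Pi$ reproduces exactly.

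It remains to transport this reference estimate back to $T$ and assemble. The care here is that, while the normal degrees of freedom transform covariantly under $\mathcal{P}_T$, the tangential average does not, so $\Pi_h$ does not commute with the Piola transform exactly; nevertheless each functional scales with a definite power of $h_T$, and combining the transformation rules for the $H^k$ seminorms of Piola-mapped fields with the bounds on $B_T$ yields $|\uv-\Pi_h\uv|_{k,T}\leqslant C h_T^{s-k}|\uv|_{s,T}$. Summing over $T\in\mathcal{T}_h$, with shape-regularity making the constant uniform, gives the asserted global bound. I expect the bookkeeping of the Piola scaling --- tracking the factors $\det B_T$ and $B_T$ through the seminorms so that the $h_T$ powers assemble into exactly $h_T^{s-k}$, and checking that the non-commutativity of the tangential functional does not spoil the reproduction --- to be the main technical obstacle; a secondary point to pin down is precisely which polynomials $\Pi_h$ reproduces, since this fixes the admissible upper range of $s$.
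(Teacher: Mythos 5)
Your route is the one the paper itself intends: the paper's entire ``proof'' is the remark preceding the proposition (local definition, preservation of linear functions, Piola invariance of the local space, then ``standard scaling arguments and the Bramble--Hilbert lemma''), and your write-up is a faithful expansion of exactly that sketch, including the correct observation that the tangential-average functional does not commute with the Piola map and must therefore be handled through uniform boundedness of the pulled-back functionals rather than exact commutation. For $1\leqslant s\leqslant 2$ the argument is complete.

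The genuine gap is the endpoint $s=3$, and you have located it yourself without closing it: you bound the reference-element error by the $H^1(\hat T)$-distance to $\uP{}_1(\hat T)$ and invoke Deny--Lions, which yields the second-order seminorm and nothing better; to reach $s=3$ you would need $\Pi_h$ to reproduce all of $\uP{}_2(T)$. But the local space is $\uP{}^{2-}(T)=\{\uv\in\uP{}_2(T):\uv\cdot\mathbf{n}|_{e_i}\in P_1(e_i)\}$, of dimension $9<12=\dim\uP{}_2(T)$, so quadratics are not reproduced. In fact the $s=3$ case is not merely out of reach of this argument but false: take $\uv=(x^2,0)$ on $\Omega$, so that $|\uv|_{3,\Omega}=0$; on a generic triangle $\uv\cdot\mathbf{n}$ is genuinely quadratic along an edge, hence $\uv\notin\uP{}^{2-}(T)$, $\Pi_h\uv\neq\uv$, and $|\uv-\Pi_h\uv|_{1,h}>0$, contradicting the claimed bound with $s=3$, $k=1$. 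The defect lies in the proposition's stated range rather than in your reasoning; the correct range is $1\leqslant s\leqslant 2$, which is all that is used downstream (Lemma \ref{lem:kernelappr}, Theorem \ref{thm:convsBDFM} and the biharmonic estimates all invoke only $s=2$).
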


\subsection{Structure of the kernel of  $\dv$ on a closed patch}

For an $m-$cell interior vertex patch $P_O$, we label cells of it sequentially as $T_i,i=1:m$, and label $e_i = \overline{T_i}\cap \overline{T_{i+1}},i=1:m-1, e_m = \overline{T_m}\cap \overline{T_{1}}$. Also, we label $e_{m+i},i=1:m$, the edge opposite $O$ in $T_i$; see Figure \ref{fig:patchillu} (left) for an illustration.

Viewing $P_O$ as a special grid, we construct $\uV{}^{\rm sBDFM}_{h0}(P_O)$ thereon, and denote 
$$
\uZ{}_{O}:=\{\uv\in \uV{}^{\rm sBDFM}_{h0}(P_O):\dv\,\uv=0\}.
$$

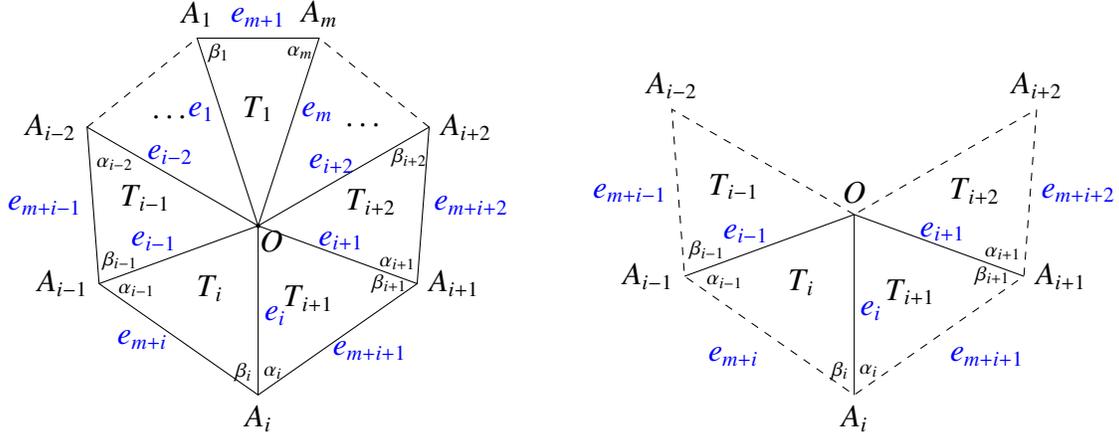
\begin{figure}[htbp]
	\centering
	\begin{tikzpicture}[scale=0.75]
		\path coordinate (O) at (0,0)
		coordinate (A) at (-90:3)
		coordinate (B) at (-160:3)
		coordinate (C) at (-20:3)
		coordinate (D) at (150:3.5)
		coordinate (E) at (30:3.5)
		coordinate (F) at (108:3.5)
		coordinate (G) at (72:3.5);
		
		\foreach \i/\angle in {O/-80} {
			\node[dot, label={[dot label]\angle:$\i$}] at (\i) {};
		}

		\node [below] at (A) {$A_i$};
		\node [left] at (B) {$A_{i-1}$};
		\node [right] at (C) {$A_{i+1}$};
		\node [left] at (D) {$A_{i-2}$};
		\node [right] at (E) {$A_{i+2}$};
		\node [above] at (F) {$A_1$};
		\node [above] at (G) {$A_m$};
		
		\draw[line width=.4pt]  (O) -- (A) ;
		\draw[line width=.4pt]  (O) -- (B) ;
		\draw[line width=.4pt]  (O) -- (C) ;
		\draw[line width=.4pt]  (O) -- (D) -- (B) -- (A) -- (C) -- (E) -- cycle ;
		\draw[line width=.4pt]  (O) -- (F) -- (G) -- cycle ;
		\draw[dashed] (D) -- (F) ;
		\draw[dashed] (E) -- (G) ;
		
		\path coordinate (B1) at ($(A)!.5!(B)$)
		coordinate (B2) at ($(A)!.4!(C)$)
		coordinate (B3) at ($(B)!.5!(D)$)
		coordinate (B4) at ($(C)!.5!(E)$)
		coordinate (B5) at ($(G)!.5!(F)$);
		
		\node [left,blue] at (B1) {$ e_{m+i} $};
		\node [right,blue] at (B2) {$ e_{m+i+1} $};
		\node [left,blue] at (B3) {$ e_{m+i-1} $};
		\node [right,blue] at (B4) {$ e_{m+i+2} $};
		\node [above,blue] at (B5) {$ e_{m+1} $};
		
		\path coordinate (C1) at (-93:1.6)
		coordinate (C2) at (-153:1.4)
		coordinate (C3) at (-23:1.6)
		coordinate (C4) at (150:1.8)
		coordinate (C5) at (30:2.2)
		coordinate (C6) at (106:2.1)
		coordinate (C7) at (74:2.1);
		
		\node[right,blue] at (C1) {$ e_i $};
		\node[above left,blue] at (C2) {$ e_{i-1} $};
		\node[above,blue] at (C3) {$ e_{i+1} $};
		\node[above,blue] at (C4) {$ e_{i-2} $};
		\node[left,blue] at (C5) {$ e_{i+2} $};
		\node[left,blue] at (C6) {$ e_1 $};
		\node[right,blue] at (C7) {$ e_{m} $};

		\path coordinate (A1) at (-130:2)
		coordinate (A2) at (-40:2)
		coordinate (A3) at (-182:2)
		coordinate (A4) at (0:2)
		coordinate (A5) at (90:2.5)
		coordinate (A6) at (134:2.2)
		coordinate (A7) at (52:2.2);
		
		\node[above right] at (A1) {$ T_i $};
		\node[left] at (A2) {$ T_{i+1} $};
		\node[above] at (A3) {$ T_{i-1} $};
		\node[above] at (A4) {$ T_{i+2} $};
		\node[below] at (A5) {$ T_1 $};
		\node[above] at (A6) {$ \cdots $};
		\node[right] at (A7) {$ \cdots $};
		
		\path coordinate (D1) at (-92:2.6)
		coordinate (D2) at (-88:2.6)
		coordinate (D3) at (-159:2.3)
		coordinate (D4) at (-174:2.45)
		coordinate (D5) at (-8:2.5)
		coordinate (D6) at (-29:2.1)
		coordinate (D7) at (151:2.9)
		coordinate (D8) at (29:2.5)
		coordinate (D9) at (84:3.1)
		coordinate (D10) at (109:3.25);
		
		\node [right,font=\tiny] at (D1) {$\alpha_i$};
		\node [left,font=\tiny] at (D2) {$\beta_i$};
		\node [below,font=\tiny] at (D3) {$\alpha_{i-1}$};
		\node [below,font=\tiny] at (D4) {$\beta_{i-1}$};
		\node [below,font=\tiny] at (D5) {$\alpha_{i+1}$};
		\node [right,font=\tiny] at (D6) {$\beta_{i+1}$};
		\node [below,font=\tiny] at (D7) {$\alpha_{i-2}$};
		\node [right,font=\tiny] at (D8) {$\beta_{i+2}$};
		\node [right,font=\tiny] at (D9) {$\alpha_{m}$};
		\node [right,font=\tiny] at (D10) {$\beta_1$};
	\end{tikzpicture}
	\hspace{0.25in}
	%  \label{fig:Tpatch}
	\begin{tikzpicture}[scale=0.8]
		\path coordinate (O) at (0,0)
		coordinate (A) at (-90:3)
		coordinate (B) at (-160:3)
		coordinate (C) at (-20:3)
		coordinate (D) at (150:3.5)
		coordinate (E) at (30:3.5);
		\node[above] at (O) {$O$};
		\node [below] at (A) {$A_i$};
		\node [left] at (B) {$A_{i-1}$};
		\node [right] at (C) {$A_{i+1}$};
		\node [above] at (D) {$A_{i-2}$};
		\node [above] at (E) {$A_{i+2}$};
		
		\draw[line width=.4pt]  (O) -- (A) ;
		\draw[line width=.4pt]  (O) -- (B) ;
		\draw[line width=.4pt]  (O) -- (C) ;
		\draw[dashed]  (O) -- (D) -- (B) -- (A) -- (C) -- (E) -- cycle ;
		
		\path coordinate (B1) at ($(A)!.5!(B)$)
		coordinate (B2) at ($(A)!.5!(C)$)
		coordinate (B3) at ($(B)!.5!(D)$)
		coordinate (B4) at ($(C)!.5!(E)$);
		
		\node [below left,blue] at (B1) {$ e_{m+i} $};
		\node [below right,blue] at (B2) {$ e_{m+i+1} $};
		\node [left,blue] at (B3) {$ e_{m+i-1} $};
		\node [right,blue] at (B4) {$ e_{m+i+2} $};

		\path coordinate (C1) at (-93:1.6)
		coordinate (C2) at (-152:1.4)
		coordinate (C3) at (-23:1.6);
		
		\node[right,blue] at (C1) {$ e_i $};
		\node[above left,blue] at (C2) {$ e_{i-1} $};
		\node[above,blue] at (C3) {$ e_{i+1} $};

		\path coordinate (A1) at (-130:2)
		coordinate (A2) at (-40:2)
		coordinate (A3) at (-182:2)
		coordinate (A4) at (0:2);
		
		\node[above right] at (A1) {$ T_i $};
		\node[left] at (A2) {$ T_{i+1} $};
		\node[above] at (A3) {$ T_{i-1} $};
		\node[above] at (A4) {$ T_{i+2} $};
		
		\path coordinate (D1) at (-92:2.6)
		coordinate (D2) at (-88:2.6)
		coordinate (D3) at (-159:2.3)
		coordinate (D4) at (-174:2.45)
		coordinate (D5) at (-8:2.5)
		coordinate (D6) at (-29:2.1);
		
		\node [right,font=\tiny] at (D1) {$\alpha_i$};
		\node [left,font=\tiny] at (D2) {$\beta_i$};
		\node [below,font=\tiny] at (D3) {$\alpha_{i-1}$};
		\node [below,font=\tiny] at (D4) {$\beta_{i-1}$};
		\node [below,font=\tiny] at (D5) {$\alpha_{i+1}$};
		\node [right,font=\tiny] at (D6) {$\beta_{i+1}$};	
	\end{tikzpicture}
	\caption{Illustration of a patch around $O$(left) and its part amplification(right).}\label{fig:patchillu}
\end{figure}
\begin{lemma}\label{lem:dimker=1}
	\label{dim_P_O=1}
	$\dim(\uZ{}_{O})=1$.
\end{lemma}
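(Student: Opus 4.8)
The plan is to compute $\dim(\uZ_O)$ by a rank--nullity argument for the divergence operator on the patch, and to recognize the result as the statement that Stenberg's space $N_M$ from \eqref{N_M} is exactly the constants, which is what condition $(C_1)$ of Proposition \ref{macroelement technique} ultimately needs. Let $V:=\uV^{\rm sBDFM}_{h0}(P_O)$ and let $Q$ be the piecewise-$P_1$ space on $P_O$, so that $\dv:V\to Q$. Since all sBDFM nodal parameters are edge integrals and the parameters on $\partial P_O$ vanish, only the $m$ interior spokes $e_1,\dots,e_m$ carry freedom (two normal moments and one tangential mean each), whence $\dim V=3m$; likewise $\dim Q=3m$. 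Because $N_M=(\dv\,V)^{\perp}$ in $Q$, rank--nullity gives $\dim(\uZ_O)=\dim\ker(\dv)=3m-\operatorname{rank}(\dv)=\dim N_M$. Thus it suffices to prove $\operatorname{rank}(\dv)=3m-1$, equivalently that $N_M=\mathbb{R}$.

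The easy half is the lower bound $\dim(\uZ_O)\ge 1$. Each $\uv\in V$ has vanishing normal trace on $\partial P_O$, so the divergence theorem forces $\int_{P_O}\dv\,\uv=0$; hence $\dv\,V$ lies in the mean-zero subspace of $Q$, of dimension $3m-1$, and $\dim\ker(\dv)\ge 1$. Conceptually I would identify this one-dimensional kernel through the local exact sequence \eqref{eq:localesp2-}: on each $T_i$ a divergence-free $\uv|_{T_i}\in\uP{}^{2-}(T_i)$ equals $\curl\phi_i$ with $\phi_i\in P^{2+}(T_i)$, and the $H(\dv)$-continuity of $\uv\cdot\mathbf{n}=\partial_t\phi$ across the spokes together with the vanishing boundary traces lets one glue the $\phi_i$ into a single continuous potential $\phi$ that is clamped on $\partial P_O$. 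This realizes $\uZ_O$ as $\curl$ of a scalar biharmonic-type potential space on $P_O$ (the object treated in Section \ref{sec:bh}); since $\curl$ kills only constants and the clamped boundary condition excludes nonzero constants, $\dim(\uZ_O)$ equals the dimension of that potential space.

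For the upper bound I would show $N_M=\mathbb{R}$ directly. Take $q\in Q$ with $(\dv\,\uv,q)=0$ for all $\uv\in V$; integrating by parts cell by cell and using $\uv\cdot\mathbf{n}=0$ on $\partial P_O$ gives
\[
0=(\dv\,\uv,q)=-\sum_{i=1}^m\int_{T_i}\uv\cdot\nabla q+\sum_{i=1}^m\int_{e_i}(\uv\cdot\mathbf{n}_{e_i})\,\jump{q},\qquad\forall\,\uv\in V .
\]
The strategy is to exploit the two families of spoke degrees of freedom and then couple them: the normal-moment freedoms on each spoke drive the interface term and are meant to force $\jump{q}=0$ on every spoke, so that $q$ is continuous on the connected patch, while the remaining freedoms, via the local surjectivity of $\dv$ onto $P_1(T_i)$ from \eqref{eq:localesp2-}, are meant to force $\nabla_h q=0$ on each cell; a continuous piecewise-affine function with vanishing gradient is a global constant.

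The main obstacle is precisely this last coupling. Unlike a standard element, $V$ has no interior bubbles, so no test function is supported on a single cell; every basis function lives on a pair of adjacent cells sharing a spoke, and the interface term $\int_{e_i}(\uv\cdot\mathbf{n})\jump{q}$ cannot be switched off independently of the volume term $\int_{T_i}\uv\cdot\nabla q$. I therefore expect the heart of the proof to be a careful ordering of the cells $T_1,\dots,T_m$ around $O$ and a propagation (telescoping) argument along the fan, showing that the cyclic constraint system has exactly one linear dependency, so that the solution space is one-dimensional rather than trivial or larger. The delicate point is the closure of the loop at $e_m=\overline{T_m}\cap\overline{T_1}$: the combinatorics of a \emph{closed} interior vertex patch must produce precisely the single compatibility relation that both certifies $\dim(\uZ_O)\ge1$ and prevents it from exceeding $1$, uniformly in $m$ and in the shape-regular geometry.
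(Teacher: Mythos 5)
Your lower bound is sound: $\dim\uV^{\rm sBDFM}_{h0}(P_O)=3m$ while $\dv$ maps into the mean-zero piecewise-$P_1$ space of dimension $3m-1$, so the kernel is nontrivial. The problem is the upper bound, which you reduce to the surjectivity statement $N_M=\mathbb{R}$ and then do not prove. You describe a strategy (use the normal moments to kill $\jump{q}$ on the spokes, use local surjectivity of $\dv$ to kill $\nabla_h q$ cell by cell), immediately observe that it cannot be executed as stated because no test function in $\uV^{\rm sBDFM}_{h0}(P_O)$ is supported in a single cell, and then defer the resolution to an unspecified ``propagation (telescoping) argument'' whose existence you only ``expect.'' That deferred step is the entire content of the lemma; as written the proposal is a plan, not a proof. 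Note also that your reduction runs the paper's logic backwards: the paper proves $\dim(\uZ_O)=1$ first and only then deduces $\dv\,\uV^{\rm sBDFM}_{h0}(P_O)=\mathbb{P}^1_{h0}(P_O)$ by dimension count in the proof of Theorem \ref{thm:infsupsBDFM}, so the statement $N_M=\mathbb{R}$ is not available to you here without recreating exactly the missing argument.

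The paper's proof stays on the primal side and is worth contrasting with your plan. A function $\upsi_h\in\uZ_O$ is divergence-free cell by cell, hence $\upsi_h|_{T_i}$ lies in the explicit six-dimensional space $\uZ_{T_i}$; the vanishing boundary degrees of freedom cut this down to the three-dimensional span of $\uw_{T_i,e_{i-1}}$, $\uw_{T_i,e_i}$, $\uw_{T_i,e_{i-1}e_i}$, so the kernel is parametrized by $3m$ coefficients. The continuity of the normal component and of the tangential integral across each spoke $e_i$ then yields, by direct computation, that the coefficients $\gamma^{i-1,i}_{T_i}$ of the functions $\uw_{T_i,e_{i-1}e_i}$ are all equal around the loop and that the remaining $2m$ coefficients are determined by them through explicit geometric factors; the cyclic system is consistent and leaves exactly one free parameter. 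This is precisely the ``single compatibility relation around the closed fan'' you anticipated, but it is obtained where each constraint couples only two adjacent cells and can be solved in closed form, rather than on the dual side where the absence of cell-local test functions blocks the decoupling you proposed. To complete your route you would have to carry out an equivalent explicit computation for $q$, which is no easier than the paper's.
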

\begin{proof}
	Assume $\upsi{}_h\in \uZ{}_{O}$, then $\upsi{}_h|_{T_i}\subset\uZ{}_{T_i}$, $i=1:m$. By the boundary conditions, it follows that
	\begin{equation}\label{eq:restri}
		\upsi{}_h|_{T_i}=\gamma_{T_i}^{i-1}\uw{}_{T_i,e_{i-1}}+\gamma_{T_i}^{i}\uw{}_{T_i,e_{i}}+\gamma_{T_i}^{i-1,i}\uw{}_{T_i,e_{i-1}e_i},
	\end{equation}
	with $\gamma_{T_i}^{i-1}$, $\gamma_{T_i}^{i}$ and $\gamma_{T_i}^{i-1,i}$ determined such that $\upsi_h$ satisfies the continuity restriction of $\uV{}^{\rm sBDFM}_{h}$.

	For an arbitrary edge $e_i$, $1\leqslant i\leqslant m$, across it the normal component of $\upsi_h$ and integration of the tangential component of $\upsi_h$ are continuous; see Figure \ref{fig:patchillu}(right) for an illustration. Based on the continuity conditions, a direct calculation shows that
	\begin{equation}
		\left\{
		\begin{aligned}
			&\gamma^{i-1,i}_{T_i}=\gamma_{T_{i+1}}^{i,i+1},\\
			& \gamma_{T_i}^{i} = \frac{d_{m+i} d_{m+i+1} \sin{(\alpha_i + \beta_i)}}{2(S_i+S_{i+1})}   \gamma^{i-1,i}_{T_i},\\
			&\gamma_{T_{i+1}}^{i} = \frac{d_{m+i} d_{m+i+1} \sin{(\alpha_i + \beta_i)}}{2(S_i+S_{i+1})}  \gamma_{T_{i+1}}^{i,i+1}.
		\end{aligned}
		\right.
	\end{equation}
	By checking all edges $e_i$, $i=1:m$, we have
	\begin{equation}\label{eq2 patch dominant}
		\gamma^{m,1}_{T_1}=\gamma^{1,2}_{T_2}=...=\gamma^{m-1,m}_{T_m},
	\end{equation}
	and
	\begin{equation}\label{dominanted coes}
		\left\{
		\begin{aligned}
			&  \gamma_{T_i}^{i-1} = \frac{d_{m+i-1} d_{m+i} \sin{(\alpha_{i-1} + \beta_{i-1})}}{2(S_{i-1}+S_i)} \gamma^{i-1,i}_{T_i},\\
			&  \gamma_{T_i}^{i}= \frac{d_{m+i} d_{m+i+1} \sin{(\alpha_i + \beta_i)}}{2(S_i+S_{i+1})} \gamma^{i-1,i}_{T_i}.
		\end{aligned}
		\right.
	\end{equation}
	In other words,
	\begin{equation}\label{dominanted coes1}
		\left\{
		\begin{aligned}
			& \gamma_{T_1}^{m} = \frac{d_{2m} d_{m+1} \sin{(\alpha_{m} + \beta_{m})}}{2(S_{m}+S_1)} \gamma^{m,1}_{T_1} , \\
			& \gamma_{T_i}^{i-1} = \frac{d_{m+i-1} d_{m+i} \sin{(\alpha_{i-1} + \beta_{i-1})}}{2(S_{i-1}+S_i)} \gamma^{i-1,i}_{T_i}  \ (i=2:m),
		\end{aligned}
		\right.
	\end{equation}
	and
	\begin{equation}\label{dominanted coes2}
		\left\{
		\begin{aligned}
			& \gamma_{T_i}^{i}= \frac{d_{m+i} d_{m+i+1} \sin{(\alpha_i + \beta_i)}}{2(S_i+S_{i+1})} \gamma^{i-1,i}_{T_i}  \ (i=1:m-1) ,\\
			& \gamma_{T_m}^{m}= \frac{d_{2m} d_{m+1} \sin{(\alpha_m + \beta_m)}}{2(S_m+S_1)} \gamma^{m-1,m}_{T_m}. \\
		\end{aligned}
		\right.
	\end{equation}
	
	By the relations \eqref{eq2 patch dominant}, \eqref{dominanted coes1} and \eqref{dominanted coes2}, we can choose $\gamma^{m,1}_{T_1}=1$, and other coefficients accordingly, then $\upsi{}_h\in \uZ{}_O$, and moreover $\uZ{}_O={\rm span}\{\upsi{}_h\}$. The proof is completed.
\end{proof}

\subsection{A stable conservative pair for the Stokes problem}

Denote
$$
\mathbb{P}{}^1_h(\mathcal{T}_h):=\{q_h\in L^2(\Omega):q_h|_T\in P_1(T), \forall\, T \in \mathcal{T}_h\} \ \mbox{and}\ \ \mathbb{P}^1_{h0}(\mathcal{T}_h):=\mathbb{P}{}^1_h(\mathcal{T}_h)\cap L^2_0(\Omega).
$$
Then $\uV^{\rm sBDFM}_{h0}\times \mathbb{P}^1_{h0}$ forms a stable pair for the Stokes problem.

\begin{theorem}[Inf-sup conditions]\label{thm:infsupsBDFM}
	Let $\{\mathcal{T}_h\}$ be a family of triangulations of $\Omega$ satisfying \emph{\textbf{Assumption\,\ref{assumption grid}}}. Then
	\begin{equation}\label{inf-sup homogeneous}
		\sup_{\undertilde{v}_h \in \undertilde{V}^{\rm sBDFM}_{h0}} \frac{( div \ \undertilde{v}_h , q_h )}{||\undertilde{v}_h||_{1,h}} \geqslant C ||q_h||_{0,\Omega},
		\forall\, q_h\in \mathbb{P}^1_h(\mathcal{T}_h).
	\end{equation}
\end{theorem}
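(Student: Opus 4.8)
The plan is to verify the four hypotheses $(C_1)$--$(C_4)$ of Stenberg's macroelement technique, Proposition~\ref{macroelement technique}, with the macroelement partition $\mathcal{M}_h$ chosen to be the family of all interior vertex patches $\{P_O : O\in\mathcal{X}_h^i\}$. By Assumption~\ref{assumption grid} every cell is covered by at least one such patch, so $\mathcal{M}_h$ is a legitimate macroelement partition. Since shape regularity bounds the number of cells meeting any interior vertex, two patches are Stenberg-equivalent as soon as they carry the same cell number $m$, and $m$ ranges over a finite, $h$-independent set; this supplies the finite list of equivalence classes demanded by $(C_2)$. For $(C_3)$ I would note that an interior edge incident to an interior vertex $O$ is a radial edge of the patch $P_O$, hence interior to it, and to at most the two patches built around its (at most two interior) endpoints, giving the bound $N=2$.

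The crux is condition $(C_1)$: I claim that for every patch $M=P_O$ the space $N_M$ of \eqref{N_M} is one-dimensional and consists of the constants. This is exactly where Lemma~\ref{lem:dimker=1} enters, through a dimension count. For an $m$-cell patch the pressure space $Q_{h,M}$ consists of discontinuous $P_1$ functions on the $m$ cells with no coupling, so $\dim Q_{h,M}=3m$. The velocity space $\undertilde{V}_{h0,M}$ carries only edge nodal parameters; those on the $m$ boundary edges of the patch vanish, leaving the three parameters on each of the $m$ radial edges, whence $\dim\undertilde{V}_{h0,M}=3m$ as well. Because $\dv$ maps $\undertilde{V}_{h0,M}$ into $Q_{h,M}$ (locally $\dv\,\uP{}^{2-}(T)=P_1(T)$ by Lemma~\ref{lem:locales}) and $N_M$ is precisely the $L^2(M)$-annihilator of $\dv\,\undertilde{V}_{h0,M}$ inside $Q_{h,M}$, the rank--nullity theorem gives
\begin{equation*}
	\dim N_M=\dim Q_{h,M}-\dim\dv\,\undertilde{V}_{h0,M}=\dim Q_{h,M}-\dim\undertilde{V}_{h0,M}+\dim\uZ{}_O=\dim\uZ{}_O,
\end{equation*}
which equals $1$ by Lemma~\ref{lem:dimker=1}. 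Finally, any constant $c$ on $M$ lies in $N_M$, since $\int_M\dv\,\uv_h\,c=c\int_{\partial M}\uv_h\cdot\mathbf{n}=0$ for every $\uv_h\in\undertilde{V}_{h0,M}\subset H_0(\dv,M)$; as the constants form a one-dimensional subspace of the one-dimensional $N_M$, they exhaust it, proving $(C_1)$.

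For the Fortin-type requirement $(C_4)$ I would take $\Pi$ to be the nodal interpolation $\Pi_h$ introduced above. For $\uv\in\undertilde{H}^1_0(\Omega)$ all boundary nodal parameters vanish, so $\Pi_h\uv\in\undertilde{V}^{\rm sBDFM}_{h0}$; the element-wise and edge-wise bounds in $(C_4)$ then follow from the approximation estimate already established for $\Pi_h$ together with the scaled trace inequality, while the $H^1_h$-stability $\|\Pi_h\uv\|_{1,h}\le C\|\uv\|_{1,\Omega}$ follows from the same scaling argument since $\Pi_h$ preserves piecewise linears. With $(C_1)$--$(C_4)$ verified, Proposition~\ref{macroelement technique} delivers \eqref{inf-sup homogeneous}; the lone global kernel direction assembled from the patched local constants is the global constant, which is quotiented out against mean-zero pressures.

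I expect the principal obstacle to be condition $(C_1)$ --- namely the exact dimension bookkeeping that converts the kernel statement $\dim\uZ{}_O=1$ into the identity $N_M=\mathrm{span}\{1\}$, and the accompanying combinatorial verification that adjacent interior vertex patches overlap in whole cells rather than merely along edges, so that the local constants glue into a single global constant across the connected adjacency graph of interior vertices. Confirming that the edge-coverage hypothesis $(C_3)$ genuinely holds under Assumption~\ref{assumption grid} alone is the delicate combinatorial point to watch.
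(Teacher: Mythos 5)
Your proposal follows essentially the same route as the paper: Stenberg's macroelement technique with the interior vertex patches as macroelements, condition $(C_1)$ reduced by a dimension count to the kernel statement of Lemma~\ref{lem:dimker=1}, and the Fortin condition handled by the interpolation operator $\Pi_h$. The paper dismisses $(C_2)$--$(C_4)$ as ``direct,'' so your more explicit bookkeeping (including the caveat that an interior edge with both endpoints on the boundary would escape every vertex patch in $(C_3)$) only supplies detail the paper's own proof omits.
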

\begin{proof}
	Firstly, for any interior vertex $O$ and its patch $P_O$, we can construct $\uV^{\rm sBDFM}_{h0}(P_O)$ and $\mathbb{P}^1_{h0}(P_O)$. Obviously $\dv\, \uV^{\rm sBDFM}_{h0}(P_O)\subset\mathbb{P}^1_{h0}(P_O)$. Thus by counting the dimension, we obtain $\dv\, \uV^{\rm sBDFM}_{h0}(P_O)=\mathbb{P}^1_{h0}(P_O)$ by Lemma \ref{lem:dimker=1}. This verifies the condition (C${}_1$) of Proposition \ref{macroelement technique}. 
	
	The other conditions of Proposition \ref{macroelement technique} are direct, and the inf-sup condition holds by Proposition \ref{macroelement technique}. The proof is completed. 
\end{proof}
%\paragraph{Korn's inequality}

Now we consider the finite element discretization: Find $(\uphi{}_h,p_h)\in \uV^{\rm sBDFM}_{h0}\times \mathbb{P}^1_{h0}$, such that
\begin{equation}\label{eq:dissbdfm}
	\left\{
	\begin{array}{lcll}
		\varepsilon^2(\nabla_h \, \uphi{}_h,\nabla_h\, \upsi{}_h)+(\dv\,\upsi{}_h,p_h)&=&(\uf,\upsi{}_h),&\forall\,\upsi{}_h\in \uV^{\rm sBDFM}_{h0}
		\\
		(\dv\,\uphi{}_h,q_h)&=&0,&\forall\,q_h\in \mathbb{P}^1_{h0}.
	\end{array}
	\right.
\end{equation}
The well-posedness of \eqref{eq:dissbdfm} is immediate. 
\begin{lemma}\label{lem:kernelappr}
	Given $\uphi\in \uH^1_0(\Omega)\cap \uH^2(\Omega)$ such that $\dv\,\uphi=0$, it holds that
	\begin{equation}
		\inf_{\upsi_h\in \uV^{\rm sBDFM}_{h0},\,\dv\,\upsi_h=0}\|\uphi-\upsi{}_h\|_{1,h} \leqslant C h \|\uphi\|_{2,\Omega}.
	\end{equation}
\end{lemma}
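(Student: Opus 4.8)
The plan is to build the divergence-free approximant by a Fortin-type correction of the nodal interpolant $\Pi_h\uphi$. The guiding observation is that $\Pi_h\uphi$ already lies in $\uV^{\rm sBDFM}_{h0}$ and approximates $\uphi$ to optimal order, while its divergence, although not exactly zero, is of size $O(h)$; a small correction furnished by the already-established inf-sup condition then removes the spurious divergence without spoiling the approximation order.

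First I would verify that $\Pi_h\uphi\in\uV^{\rm sBDFM}_{h0}$. Since $\uphi\in\uH^1_0(\Omega)\cap\uH^2(\Omega)$, its trace vanishes on every boundary edge $e$, so all nodal parameters $\fint_e\uphi\cdot\mathbf{n}_e\,p$ with $p\in P_1(e)$ and $\fint_e\uphi\cdot\mathbf{t}_e$ located on $\partial\Omega$ vanish; hence the boundary nodal parameters of $\Pi_h\uphi$ are zero. In particular $\Pi_h\uphi\cdot\mathbf{n}=0$ pointwise on $\partial\Omega$, so $r_h:=\dv\,\Pi_h\uphi$ has zero mean over $\Omega$ and thus belongs to $\mathbb{P}^1_{h0}$. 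Using $\dv\,\uphi=0$ I would then write $r_h=\dv(\Pi_h\uphi-\uphi)$ and bound $\|r_h\|_{0,\Omega}\leqslant\sqrt2\,|\Pi_h\uphi-\uphi|_{1,h}\leqslant C h\|\uphi\|_{2,\Omega}$, invoking the interpolation estimate $|\uphi-\Pi_h\uphi|_{k,h}\leqslant C h^{s-k}|\uphi|_{s,\Omega}$ with $s=2$, $k=1$.

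Next I would invoke Theorem \ref{thm:infsupsBDFM}, read as the surjectivity of $\dv:\uV^{\rm sBDFM}_{h0}\to\mathbb{P}^1_{h0}$ together with a bounded right inverse, to obtain $\uzeta_h\in\uV^{\rm sBDFM}_{h0}$ with $\dv\,\uzeta_h=r_h$ and $\|\uzeta_h\|_{1,h}\leqslant C\|r_h\|_{0,\Omega}\leqslant C h\|\uphi\|_{2,\Omega}$. Setting $\upsi_h:=\Pi_h\uphi-\uzeta_h\in\uV^{\rm sBDFM}_{h0}$ gives $\dv\,\upsi_h=r_h-r_h=0$, so $\upsi_h$ is admissible in the infimum, and the triangle inequality yields $\|\uphi-\upsi_h\|_{1,h}\leqslant\|\uphi-\Pi_h\uphi\|_{1,h}+\|\uzeta_h\|_{1,h}\leqslant C h\|\uphi\|_{2,\Omega}$, where the first term is again controlled by the interpolation estimate (combining $s=2$ with $k=0$ and $k=1$).

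The argument is essentially routine once these pieces are assembled, so I do not expect a deep obstacle; the two points that need care are the passage from the vanishing boundary traces of $\uphi$ to membership $\Pi_h\uphi\in\uV^{\rm sBDFM}_{h0}$, and the use of Theorem \ref{thm:infsupsBDFM} as a stable right inverse of the divergence on the mean-zero space $\mathbb{P}^1_{h0}$. One might hope to bypass the correction $\uzeta_h$ by proving a commuting relation forcing $\dv\,\Pi_h\uphi=0$ outright whenever $\dv\,\uphi=0$; however, the sBDFM degrees of freedom carry no interior (cell-average) moment, so the commuting diagram onto the full space $P_1(T)$ is not immediate, and the Fortin correction above is the cleaner and self-contained route given that only the results of the present section are available.
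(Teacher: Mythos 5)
Your proof is correct, but it takes a genuinely different route from the paper's. The paper constructs the divergence-free approximant as the velocity component of an auxiliary \emph{discrete Stokes problem}: it notes that $(\uphi,0)$ solves the continuous Stokes system with data $\curl\,\rot\,\uphi=-\Delta\uphi$, solves the corresponding discretization in $\uV^{\rm sBDFM}_{h0}\times\mathbb{P}^1_{h0}$ to get $\uphi^*_h$ with $\dv\,\uphi^*_h=0$, and then cites the $O(h)$ convergence of that scheme. You instead use the classical ``approximation in the discrete kernel'' mechanism: interpolate, measure the spurious divergence $r_h=\dv\,\Pi_h\uphi=\dv(\Pi_h\uphi-\uphi)\in\mathbb{P}^1_{h0}$, and remove it with the bounded right inverse of $\dv$ furnished by Theorem \ref{thm:infsupsBDFM}. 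Both arguments ultimately rest on the same two inputs (the interpolation estimate for $\Pi_h$ and the inf-sup condition), but yours is the more self-contained and arguably cleaner one: the paper's version leans on the error estimate for the discrete Stokes scheme, which is only stated afterwards (Theorem \ref{thm:convsBDFM}) and whose standard proof itself requires precisely the kernel-approximation property being established here, so your route avoids any appearance of circularity. Your two ``points that need care'' are handled correctly: the boundary nodal parameters of $\Pi_h\uphi$ do vanish since $\uphi\in\uH^1_0(\Omega)$, and the right-inverse reading of the inf-sup condition on the mean-zero space $\mathbb{P}^1_{h0}$ (note the theorem's quantifier over all of $\mathbb{P}^1_h$ is a slight overstatement that must be restricted to $\mathbb{P}^1_{h0}$, which is where your $r_h$ lives) is exactly how the paper itself uses it later in Lemma \ref{lem:stalep0}.
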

\begin{proof}
	Let $(\uphi^*,p^*)\in \uH^1_0(\Omega)\times L^2_0(\Omega)$ be such that
	\begin{equation}
		\left\{
		\begin{array}{lcll}
			(\nabla\, \uphi^*,\nabla\,\upsi)+(p^*,\dv\,\upsi)&=&(\curl\,\rot\,\uphi,\upsi),&\forall\,\upsi\in \uH{}^1_0(\Omega),
			\\
			(\dv\,\uphi^*,q)&=&0,&\forall\,q\in L^2_0(\Omega).
		\end{array}
		\right.
	\end{equation}
	Then $\uphi^*=\uphi$ and $p=0$.
	Now let $(\uphi^*_h,p^*_h)\in \uV{}^{\rm sBDFM}_{h0}\times \mathbb{P}^1_{h0}$ be such that
	\begin{equation}
		\left\{
		\begin{array}{lcll}
			(\nabla_h \,  \uphi^*_h,\nabla_h\,\upsi_h)+(\dv\,\upsi_h,p^*_h)&=&(\curl\,\rot\,\uphi,\upsi_h),&\forall\,\upsi_h\in \uV{}^{\rm sBDFM}_{h0},
			\\
			(\dv\,\uphi^*_h,q_h)&=&0,&\forall\,q_h\in \mathbb{P}^1_{h0}.
		\end{array}
		\right.
	\end{equation}
	Then $\dv\,\uphi^*_h=0$ and $\|\uphi^*-\uphi^*_h\| \leqslant C h\|\uphi\|_{2,\Omega}$. The proof is completed.
\end{proof}

The convergence estimate robust in $\varepsilon$ can be obtained in a standard way. 
\begin{theorem}\label{thm:convsBDFM}
	Let $(\uphi,p)$ and $(\uphi{}_h,p_h)$ be the solutions of \eqref{eq:Stokes vf} and \eqref{eq:dissbdfm}, respectively. If $(\uphi,p)\in \uH^2(\Omega)\times H^1(\Omega)$, then
	\begin{equation}
		|\uu - \uu{}_{h}|_{1,h}  \leqslant C h|\uu|_{2,\Omega},\ \ \ \mbox{and}\ \ \   \|p-p_{h}\|_{0,\Omega} \leqslant C (h|p|_{1,\Omega} + \varepsilon^{2} h|\uu|_{2,\Omega}). 
	\end{equation}
\end{theorem}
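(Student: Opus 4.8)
The plan is to run a standard mixed-plus-nonconforming error analysis, exploiting the strict conservation of the pair to make the velocity estimate independent of both $\varepsilon$ and the pressure (pressure robustness). Throughout I write $\uw:=\uphi_h-\upsi_h$ and introduce the viscous consistency functional $E_h(\uu,\uv_h):=\varepsilon^2\sum_{e\in\mathcal{E}_h}\int_e(\partial\uu/\partial n_e)\cdot\jump{\uv_h}$, which measures the nonconformity $\uV^{\rm sBDFM}_{h0}\not\subset\uH^1(\Omega)$. The workhorses are Lemma~\ref{lem:kernelappr} (divergence-free approximation) for the velocity part and Theorem~\ref{thm:infsupsBDFM} (inf-sup) for the pressure part, together with the approximation property of $\Pi_h$, standard trace and Bramble--Hilbert estimates, and a single consistency bound $|E_h(\uu,\uv_h)|\leqslant C\varepsilon^2 h|\uu|_{2,\Omega}|\uv_h|_{1,h}$ that I would prove once and reuse.

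First I would establish that consistency bound. The key structural input is the weak continuity built into $\uV^{\rm sBDFM}_{h0}$: across every interior edge the normal component of any $\uv_h$ is single-valued while the tangential component has continuous mean, and on $\partial\Omega$ all boundary nodal parameters vanish. Hence $\jump{\uv_h}$ has vanishing normal component and vanishing tangential mean on each $e$, so $\int_e\mathbf{c}_e\cdot\jump{\uv_h}=0$ for every edge-constant $\mathbf{c}_e$. Choosing $\mathbf{c}_e:=\fint_e\partial\uu/\partial n_e$ and applying Cauchy--Schwarz, the scaled trace inequality and the Bramble--Hilbert lemma edge by edge gives $\|\partial\uu/\partial n_e-\mathbf{c}_e\|_{0,e}\leqslant Ch_e^{1/2}|\uu|_{2,T_e}$ and $\|\jump{\uv_h}\|_{0,e}\leqslant Ch_e^{1/2}|\uv_h|_{1,h}$ (local seminorms over the elements adjacent to $e$), which combine, after summation over $\mathcal{E}_h$, to the claimed bound.

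Next, the velocity estimate. Since $\dv\,\uV^{\rm sBDFM}_{h0}=\mathbb{P}^1_{h0}$, any discretely divergence-free field is in fact pointwise divergence-free; in particular $\dv\,\uphi_h=0$. Let $\upsi_h$ be the divergence-free approximation of Lemma~\ref{lem:kernelappr}, so $\dv\,\upsi_h=0$ and $|\uu-\upsi_h|_{1,h}\leqslant Ch|\uu|_{2,\Omega}$; then $\uw$ is pointwise divergence-free and admissible in \eqref{eq:dissbdfm}. Testing the elementwise integration-by-parts identity for the Stokes equation against $\uw$, the volume pressure term drops because $\dv_h\uw=0$ and the edge pressure term drops because $\uw\in\uH_0(\dv,\Omega)$, leaving only $E_h(\uu,\uw)$ from the nonconformity; the discrete pressure term also vanishes on the divergence-free $\uw$. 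This yields $\varepsilon^2|\uw|_{1,h}^2=\varepsilon^2(\nabla_h(\uu-\upsi_h),\nabla_h\uw)-E_h(\uu,\uw)$. Dividing by $\varepsilon^2|\uw|_{1,h}$, using the consistency bound and the triangle inequality gives $|\uu-\uphi_h|_{1,h}\leqslant C|\uu-\upsi_h|_{1,h}+Ch|\uu|_{2,\Omega}\leqslant Ch|\uu|_{2,\Omega}$, with constants independent of $\varepsilon$ and of $p$.

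Finally, the pressure estimate invokes Theorem~\ref{thm:infsupsBDFM}. Let $I_hp\in\mathbb{P}^1_{h0}$ be the $L^2$-projection of $p$, so $\|p-I_hp\|_{0,\Omega}\leqslant Ch|p|_{1,\Omega}$. For arbitrary $\uv_h\in\uV^{\rm sBDFM}_{h0}$, subtracting the discrete momentum equation from the elementwise integrated continuous one and cancelling the exact-pressure volume terms leaves, up to the sign convention of \eqref{eq:dissbdfm}, the identity $(\dv\,\uv_h,p_h-I_hp)=\varepsilon^2(\nabla_h(\uphi_h-\uu),\nabla_h\uv_h)+E_h(\uu,\uv_h)+(\dv\,\uv_h,p-I_hp)$. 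Bounding the three terms by the velocity estimate, the consistency bound, and the projection error, then dividing by $|\uv_h|_{1,h}$ and taking the supremum, the inf-sup condition gives $\|p_h-I_hp\|_{0,\Omega}\leqslant C(h|p|_{1,\Omega}+\varepsilon^2h|\uu|_{2,\Omega})$, and the triangle inequality completes the bound. The main obstacle is precisely the consistency bound: everything hinges on the weak tangential continuity and exact normal continuity of $\uV^{\rm sBDFM}_{h0}$, which together make the pressure drop out of the velocity estimate and supply the $O(h)$ factor; once it is in place the remaining steps are a routine assembly of Lemma~\ref{lem:kernelappr}, Theorem~\ref{thm:infsupsBDFM} and the approximation estimates.
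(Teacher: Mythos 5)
Your argument is correct and is precisely the ``standard way'' the paper alludes to without writing out: the paper gives no proof of this theorem beyond the remark that the $\varepsilon$-robust estimate is standard, relying implicitly on Lemma \ref{lem:kernelappr}, the inf-sup condition of Theorem \ref{thm:infsupsBDFM}, and the weak normal/tangential continuity of $\uV^{\rm sBDFM}_{h0}$, exactly as you do. Your consistency bound (using that the jump of a discrete function has vanishing normal component and vanishing tangential mean on every edge, so constants can be subtracted) is the key step the paper leaves implicit, and you have supplied it correctly.
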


\section{A continuous nonconforming finite element scheme for the biharmonic equation}
\label{sec:bh}

\subsection{A finite element Stokes complex}

Define
\begin{equation}
	V_h^{2+}:=\{v_h\in H^1(\Omega):v_h|_T\in P^{2+}(T),\ \forall\,T\in\mathcal{T}_h;\ \int_e\frac{\partial v_h}{\partial\mathbf{n}}\ \mbox{is\ continuous\ across\ interior\ edge}\ e\},
\end{equation}
and
\begin{equation}\label{eq:vh02+}
	V^{2+}_{h0}:=\{v_h\in V_h^{2+}\cap H^1_0(\Omega):\int_e\frac{\partial v_h}{\partial\mathbf{n}}=0 \ \mbox{on\ boundary\ edge}\ e\}.
\end{equation}

%\subsubsection{A complex}
\begin{lemma}\label{lem:essbdfm}
	The exact sequence holds
	\begin{equation}
		\{0\} \xrightarrow{\rm inc} V^{2+}_{h0} \xrightarrow{{\curl}} \uV{}^{\rm sBDFM}_{h0} \xrightarrow{{\dv}} \mathbb{P}^1_{h0}  \xrightarrow{\int_\Omega\cdot} \{0\}.
	\end{equation}
\end{lemma}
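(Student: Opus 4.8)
The plan is to prove exactness at each of the three nontrivial nodes separately. Exactness at the two ends will come from elementary arguments plus the inf-sup estimate of Theorem~\ref{thm:infsupsBDFM}, while exactness in the middle I would obtain by an explicit local-potential construction rather than by a dimension count. First I would check that $\curl$ really maps into $\uV^{\rm sBDFM}_{h0}$ and that the complex property holds. For $v_h\in V^{2+}_{h0}$ one has $\curl v_h|_T\in\curl P^{2+}(T)\subset\uP{}^{2-}(T)$ and $\dv\,\curl v_h=0$ by \eqref{eq:localesp2-}. The two continuity requirements transfer correctly because on any edge $e$ one has $\curl v_h\cdot\mathbf{n}_e=\partial_{\mathbf{t}_e}v_h$ and $\curl v_h\cdot\mathbf{t}_e=-\partial_{\mathbf{n}_e}v_h$: continuity of $v_h$ makes its tangential derivative single-valued on $e$, so $\curl v_h\cdot\mathbf{n}_e$ is continuous, while $\int_e\curl v_h\cdot\mathbf{t}_e=-\int_e\partial_{\mathbf{n}_e}v_h$ is continuous exactly by the defining constraint of $V_h^{2+}$. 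The homogeneous conditions pass over the same way: $v_h=0$ on $\partial\Omega$ forces $\partial_{\mathbf{t}}v_h=0$ hence $\curl v_h\cdot\mathbf{n}=0$ on boundary edges, and $\int_e\partial_{\mathbf{n}}v_h=0$ gives $\int_e\curl v_h\cdot\mathbf{t}=0$ there. Thus $\curl V^{2+}_{h0}\subset\uV^{\rm sBDFM}_{h0}\cap\ker(\dv)$.

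Exactness at the two ends is then short. At $V^{2+}_{h0}$, if $\curl v_h=0$ then $\nabla v_h=0$, so $v_h$ is globally constant by its $H^1$-continuity and vanishes by the homogeneous boundary condition; hence $\curl$ is injective. At $\mathbb{P}^1_{h0}$, since every element has zero mean, exactness means $\dv$ is onto $\mathbb{P}^1_{h0}$, which is precisely the content of Theorem~\ref{thm:infsupsBDFM}: were $\dv$ not surjective, a nonzero $q_h\in\mathbb{P}^1_{h0}$ orthogonal to its range would violate the inf-sup estimate.

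The crux is the middle inclusion $\ker(\dv)\subset\curl V^{2+}_{h0}$. Given $\undertilde{v}_h\in\uV^{\rm sBDFM}_{h0}$ with $\dv\,\undertilde{v}_h=0$, \eqref{eq:localesp2-} yields on each $T$ a potential $w_T\in P^{2+}(T)$, unique up to an additive constant, with $\curl w_T=\undertilde{v}_h|_T$. I would then fix these constants so that the $w_T$ glue to a single $H^1$ function. Across an interior edge $e=\overline T\cap\overline{T'}$, continuity of $\undertilde{v}_h\cdot\mathbf{n}_e=\partial_{\mathbf{t}_e}w$ forces $w_T-w_{T'}$ to be a constant $c_e$ on $e$, so one must solve $c_T-c_{T'}=-c_e$ for cell constants. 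This system on the dual graph is solvable exactly when the signed sum of the $c_e$ vanishes around every cycle; since $c_e$ equals the value of $w_T-w_{T'}$ at the shared vertex, the sum telescopes to zero around each interior-vertex loop, and these loops generate the whole cycle space of the dual graph of a simply connected triangulation (its cycle rank $\#\mathcal{E}_h^i-\#\mathcal{T}_h+1$ equals $\#\mathcal{X}_h^i$ by Euler's formula). The glued $v_h$, normalized by its single remaining global constant, then lies in $H^1_0(\Omega)$, satisfies $\int_e\partial_{\mathbf{n}}v_h=-\int_e\undertilde{v}_h\cdot\mathbf{t}$ which is continuous across interior edges and zero on $\partial\Omega$, and obeys $\curl v_h=\undertilde{v}_h$; hence $v_h\in V^{2+}_{h0}$ and the middle node is exact.

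I expect the gluing of the local potentials to be the main obstacle: one must argue carefully that each edge jump is genuinely constant, that the consistency conditions reduce to the telescoping interior-vertex loops, and that these loops span the full cycle space, which is where simple connectedness enters. An alternative to this step would be a pure dimension count, verifying $\dim V^{2+}_{h0}-\dim\uV^{\rm sBDFM}_{h0}+\dim\mathbb{P}^1_{h0}=0$ through Euler's formula and combining it with the injectivity and surjectivity already established; but that route needs a unisolvent set of degrees of freedom for $V_h^{2+}$ together with an independence check for the normal-derivative constraints, so the constructive argument above seems the cleaner path.
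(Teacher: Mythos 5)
Your proof is correct, and its overall skeleton coincides with the paper's: reduce everything to the identity $\{\uv_h\in\uV^{\rm sBDFM}_{h0}:\dv\,\uv_h=0\}=\curl\,V^{2+}_{h0}$, get surjectivity of $\dv$ from Theorem~\ref{thm:infsupsBDFM}, produce a local potential in $P^{2+}(T)$ via Lemma~\ref{lem:locales}, and recover the extra constraint $\int_e\partial_{\mathbf n}v_h$ continuous from the tangential continuity built into $\uV^{\rm sBDFM}_{h0}$. The one place where you genuinely diverge is the crux step of globalizing the potential. The paper does this in one line: it introduces the conforming space $V^{2+,C}_{h0}=\{v_h\in H^1_0(\Omega):v_h|_T\in P^{2+}(T)\}$ and invokes the continuous de Rham complex \eqref{eq:derhamc} on the simply connected domain together with the local exactness to assert the existence of $w_h\in V^{2+,C}_{h0}$ with $\curl\,w_h=\uv_h$; the topology of $\Omega$ is thus outsourced to the continuous complex. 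You instead redo this at the discrete level: constancy of the trace jump $w_T-w_{T'}$ on each interior edge (from $H(\dv)$-continuity of the normal component), a linear system for the cell constants on the dual graph, telescoping of the edge constants around each interior-vertex loop, and the fact that these loops span the cycle space of rank $\#\mathcal{E}_h^i-\#\mathcal{T}_h+1=\#\mathcal{X}_h^i$. Your route is longer but entirely self-contained and makes explicit exactly where simple connectedness is used; the paper's is shorter but leans on the reader accepting that the continuous potential of a piecewise-polynomial divergence-free field is itself piecewise polynomial and globally continuous, which is precisely the content your gluing argument verifies. Both are valid; nothing is missing from your version.
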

\begin{proof}
	Regarding Theorem \ref{thm:infsupsBDFM}, we only have to show
	\begin{equation}
		\{\uv_h\in \uV{}^{\rm sBDFM}_{h0}:\dv\,\uv_h=0\}=\curl\, V^{2+}_{h0}.
	\end{equation}
	Denote $V^{2+,C}_{h0}:=\{v_h\in H^1_0(\Omega):v_h|_T\in P^{2+}(T),\ \forall\,T\in\mathcal{T}_h\}$. Given $\uv{}_h\in \uV{}^{\rm sBDFM}_{h0}\subset H_0(\dv,\Omega)$ such that $\dv\,\uv{}_h=0$, by the local exact sequence Lemma \ref{lem:locales} and the de Rham complex \ref{eq:derhamc}, there exists a $w_h\in V^{2+,C}_{h0}$, such that $\curl\, w_h=\uv{}_h$. Further, by the tangential continuity restriction on $\uv{}_h$, it follows that $w_h\in V^{2+}_{h0}$. The proof is completed.
\end{proof}

\subsection{A low-degree scheme for biharmonic equation}

We consider the biharmonic equation: given $g\in H^{-1}(\Omega)$, find $u\in H^2_0(\Omega)$, such that
\begin{equation}\label{eq:bih}
	(\nabla^2\,u,\nabla^2\,v)=\langle g,v\rangle ,\quad\forall\,v\in H^2_0(\Omega).
\end{equation}
A finite element discretization is to find $u_h\in V^{2+}_{h0}$, such that
\begin{equation}\label{eq:bihdis}
	(\nabla_h^2\,u_h,\nabla_h^2\,v_h)=\langle g,v_h\rangle,\quad\forall\,v_h\in V^{2+}_{h0}.
\end{equation}

The lemma below is an immediate consequence of Lemmas \ref{lem:kernelappr} and \ref{lem:essbdfm}.
\begin{lemma}
	It holds for $w\in H^3(\Omega)\cap H^2_0(\Omega)$ that
	\begin{equation}
		\inf_{v_h\in V^{2+}_{h0}}\|w-v_h\|_{2,h}\leqslant C h\|w\|_{3,\Omega}.
	\end{equation}
\end{lemma}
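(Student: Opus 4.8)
The plan is to lift the scalar approximation problem into the divergence-free vector approximation already settled in Lemma \ref{lem:kernelappr}, using the Stokes complex \eqref{eq:stokesc} at the continuous level and the discrete exact sequence of Lemma \ref{lem:essbdfm} at the discrete level. First I would set $\uphi:=\curl\,w$. Since $w\in H^2_0(\Omega)$, the complex \eqref{eq:stokesc} gives $\uphi\in\uH^1_0(\Omega)$ with $\dv\,\uphi=0$; moreover $w\in H^3(\Omega)$ forces $\uphi\in\uH^2(\Omega)$ with $\|\uphi\|_{2,\Omega}\leqslant C\|w\|_{3,\Omega}$. Thus $\uphi$ is admissible for Lemma \ref{lem:kernelappr}, which supplies a divergence-free $\upsi_h\in\uV^{\rm sBDFM}_{h0}$ satisfying $\|\uphi-\upsi_h\|_{1,h}\leqslant Ch\|\uphi\|_{2,\Omega}\leqslant Ch\|w\|_{3,\Omega}$. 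By the exactness in Lemma \ref{lem:essbdfm}, the divergence-free field $\upsi_h$ equals $\curl\,v_h$ for a (unique, by injectivity of $\curl$ on $V^{2+}_{h0}$) $v_h\in V^{2+}_{h0}$; this $v_h$ is the intended approximant.

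Next I would transfer the estimate from $\upsi_h$ back to $v_h$ through the pointwise identities $|\curl f|=|\nabla f|$ and $|\nabla(\curl f)|=|\nabla^2 f|$. The latter holds componentwise because the entries of the Jacobian $\nabla(\curl f)$ are exactly those of the Hessian $\nabla^2 f$ up to sign, so the two Frobenius norms agree. Applied cell by cell to $f=w-v_h$, these give $|w-v_h|_{1}=\|\uphi-\upsi_h\|_{0,\Omega}$ and $|w-v_h|_{2,h}=|\uphi-\upsi_h|_{1,h}$; both right-hand sides are already bounded by $Ch\|w\|_{3,\Omega}$ since they are controlled by $\|\uphi-\upsi_h\|_{1,h}$.

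It remains to control the $L^2$ contribution to the full broken norm $\|\cdot\|_{2,h}$. Here I would observe that $w-v_h\in H^1_0(\Omega)$ globally: indeed $w\in H^2_0(\Omega)\subset H^1_0(\Omega)$, while $V^{2+}_{h0}$ is $H^1$-conforming with homogeneous boundary values, so the difference inherits global $H^1_0$ membership. A standard Poincaré inequality then yields $\|w-v_h\|_{0,\Omega}\leqslant C|w-v_h|_{1}\leqslant Ch\|w\|_{3,\Omega}$. Summing the three contributions gives $\|w-v_h\|_{2,h}\leqslant Ch\|w\|_{3,\Omega}$, and since $v_h\in V^{2+}_{h0}$ this bounds the infimum.

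The step I expect to carry the real content is the norm transfer: one must remember that $\|\cdot\|_{2,h}$ is a \emph{full} broken norm, so matching only the top-order seminorm via $|\uphi-\upsi_h|_{1,h}$ is insufficient, and the lower-order terms must be recovered. The $H^1$-seminorm comes for free from the $L^2$-part of the $\|\cdot\|_{1,h}$ estimate through $|\curl f|=|\nabla f|$, and the $L^2$-norm is then tamed by Poincaré precisely because $w-v_h$ lands in the conforming space $H^1_0(\Omega)$. This global membership is the single structural fact that makes the reduction to Lemmas \ref{lem:kernelappr} and \ref{lem:essbdfm} genuinely immediate.
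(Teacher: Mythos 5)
Your proposal is correct and follows essentially the same route as the paper: identify $\curl\,w$ with a divergence-free field, use the exactness of Lemma \ref{lem:essbdfm} to rewrite the infimum over $V^{2+}_{h0}$ as an infimum over the divergence-free subspace of $\uV^{\rm sBDFM}_{h0}$, and invoke Lemma \ref{lem:kernelappr}. In fact you are slightly more careful than the paper, whose displayed proof only bounds the seminorm $|w-v_h|_{2,h}$; your explicit recovery of the lower-order terms via $|\curl f|=|\nabla f|$ and the Poincar\'e inequality on $H^1_0(\Omega)$ closes that small gap.
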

\begin{proof}
	By Lemmas \ref{lem:essbdfm} and \ref{lem:kernelappr},
	\begin{multline}
		\qquad\inf_{v_h\in V^{2+}_{h0}}|w-v_h|_{2,h}=\inf_{v_h\in V^{2+}_{h0}}|\curl\, w-\curl\, v_h|_{1,h}
		\\
		=\inf_{\upsi_h\in \uV^{\rm sBDFM}_{h0},\,\dv\,\upsi_h=0}|\curl\, w-\upsi_h|_{1,h}\leqslant C h|\curl\, w|_{2,\Omega}\leqslant C h\|w\|_{3,\Omega}.\qquad
	\end{multline}
	This completes the proof. 
\end{proof}
\begin{theorem}
	Let $u$ and $u_h$ be the solutions of \eqref{eq:bih} and \eqref{eq:bihdis} respectively, and assume $u\in H^3(\Omega)\cap H^2_0(\Omega)$. Then
	\begin{equation}
		\|u-u_h\|_{2,h}\leqslant C h\|u\|_{3,\Omega}.
	\end{equation}
\end{theorem}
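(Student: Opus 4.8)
The plan is to establish the error estimate by the standard Strang-type framework for nonconforming finite element methods, since $V^{2+}_{h0}\not\subset H^2_0(\Omega)$ but the discrete problem \eqref{eq:bihdis} is well-posed (the piecewise Hessian seminorm $|\cdot|_{2,h}$ is a norm on $V^{2+}_{h0}$ by a discrete Poincar\'e--Friedrichs argument, which I would take for granted). The error splits, via the second Strang lemma, into an approximation part and a consistency part:
\begin{equation*}
\|u-u_h\|_{2,h}\leqslant C\Big(\inf_{v_h\in V^{2+}_{h0}}\|u-v_h\|_{2,h}+\sup_{w_h\in V^{2+}_{h0}}\frac{|E_h(u,w_h)|}{\|w_h\|_{2,h}}\Big),
\end{equation*}
where $E_h(u,w_h):=(\nabla_h^2\,u,\nabla_h^2\,w_h)-\langle g,w_h\rangle$ is the consistency error arising from nonconformity.

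The approximation term is already handled: by the immediately preceding lemma, $\inf_{v_h\in V^{2+}_{h0}}\|u-v_h\|_{2,h}\leqslant C h\|u\|_{3,\Omega}$ under the assumed $H^3$-regularity. So the first step is simply to invoke that lemma, which is itself a consequence of the finite element Stokes complex in Lemma \ref{lem:essbdfm} together with the kernel approximation of Lemma \ref{lem:kernelappr}. The real work is controlling the consistency error. Here I would integrate by parts cell by cell: since $u$ solves \eqref{eq:bih}, on each $T$ one has $\nabla^2 u:\nabla^2 w_h$ integrated against the smooth exact solution, and summing the boundary contributions over all edges produces jump terms in the normal derivative of $w_h$ and in $w_h$ itself across interior edges (and on $\partial\Omega$). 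Because $w_h\in V^{2+}_{h0}$ is $H^1$-conforming and has continuous \emph{edge-averaged} normal derivative (with these quantities vanishing on boundary edges, cf. \eqref{eq:vh02+}), the jumps $\jump{w_h}$ vanish entirely and the averaged part of $\jump{\partial w_h/\partial\mathbf{n}}$ vanishes; only the zero-average (higher-moment) part of the normal-derivative jump survives.

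The key estimate is therefore that this surviving consistency functional is of order $h$. I would bound it edge by edge: on each interior edge $e$ the weak continuity built into $V^{2+}_{h0}$ forces $\int_e\jump{\partial w_h/\partial\mathbf{n}}=0$, so one may subtract the edge mean of the corresponding derivative of $u$ without changing the integral, and then apply the trace and Bramble--Hilbert (scaling) arguments to gain a factor $h$ against $\|u\|_{3,\Omega}$ and a factor $\|w_h\|_{2,h}$ from the jump. This is the classical device: weak continuity of exactly the right moment makes the averaged part of the nonconformity error disappear and renders the remainder first order. Combining the $O(h)$ approximation bound with the $O(h)$ consistency bound and dividing by $\|w_h\|_{2,h}$ in the supremum yields $\|u-u_h\|_{2,h}\leqslant Ch\|u\|_{3,\Omega}$.

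The main obstacle I anticipate is the consistency estimate, specifically verifying that the weak continuity encoded in $V^{2+}_{h0}$ (continuity of $\int_e\partial v_h/\partial\mathbf{n}$ across interior edges, vanishing on boundary edges) is \emph{precisely} the condition needed to annihilate the leading-order boundary terms after integration by parts. One must check carefully that the $H^1$-conformity kills the $\jump{w_h}$ contributions and that the single continuous normal-derivative moment suffices to kill the leading normal-derivative jump term, leaving only a remainder whose edge-mean vanishes and can thus be estimated to first order; the tangential-derivative boundary terms require a separate but analogous treatment exploiting the $H^1_0$ boundary condition. Once the patch of continuity conditions is matched against the boundary integrals term by term, the scaling estimates are routine.
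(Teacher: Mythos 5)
Your proposal is correct and is precisely the ``standard argument'' the paper alludes to when it omits the proof: the second Strang lemma, the approximation bound from the immediately preceding lemma, and the classical consistency estimate in which the weak continuity of $\int_e\partial w_h/\partial\mathbf{n}$ (together with $H^1_0$-conformity, which kills the $\jump{w_h}$ and tangential-derivative terms) lets one subtract edge means and gain the factor $h$ via trace and Bramble--Hilbert scaling. The only point to watch is that with $u\in H^3(\Omega)$ one integrates by parts only once per cell and interprets $\langle g,w_h\rangle$ through the distributional pairing of $\Delta^2 u$ with $H^1_0$ functions, so that the surviving boundary terms involve only $\jump{\nabla w_h}$; this is consistent with what you wrote.
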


The proof of the theorem follows from standard arguments, and we omit it here. 

\subsection{Basis functions of $V^{2+}_{h0}$}

For the implementation of the finite element schemes, in this section, we present the explicit formulation of basis functions of certain finite element spaces.

\subsubsection{Basis function of the kernel subspace of {\rm sBDFM} element}
Denote the kernel subspace
\begin{equation}
	\uZ_{h0}:=\{\uv_h\in \uV^{\rm sBDFM}_{h0}:\dv\,\uv_h=0\}.
\end{equation}

%We are going to figure out its basis. 

\begin{figure}[htbp]
	\centering
	\begin{tikzpicture}[scale=0.8]
		\path coordinate (O) at (0,0)
		coordinate (A) at (-90:3)
		coordinate (B) at (-160:3)
		coordinate (C) at (-20:3)
		coordinate (D) at (150:3.5)
		coordinate (E) at (30:3.5)
		coordinate (F) at (108:3.5)
		coordinate (G) at (72:3.5);
		
		\foreach \i/\angle in {O/-80} {
			\node[dot, label={[dot label]\angle:$\i$}] at (\i) {};
		}

		\node [below] at (A) {$A_1$};
		\node [left] at (B) {$A_m$};
		\node [right] at (C) {$A_2$};
		\node [left] at (D) {$A_{m-1}$};
		\node [right] at (E) {$A_3$};
		\node [above] at (F) {$A_i$};
		\node [above] at (G) {$A_{i-1}$};
		
		\draw[line width=.4pt]  (O) -- (A) ;
		\draw[line width=.4pt]  (O) -- (B) ;
		\draw[line width=.4pt]  (O) -- (C) ;
		\draw[line width=.4pt]  (O) -- (D) -- (B) -- (A) -- (C) -- (E) -- cycle ;
		\draw[line width=.4pt]  (O) -- (F) -- (G) -- cycle ;
		\draw[dashed] (D) -- (F) ;
		\draw[dashed] (E) -- (G) ;
		
		\path coordinate (B1) at ($(A)!.5!(B)$)
		coordinate (B2) at ($(A)!.4!(C)$)
		coordinate (B3) at ($(B)!.5!(D)$)
		coordinate (B4) at ($(C)!.5!(E)$)
		coordinate (B5) at ($(F)!.5!(G)$);
		
		\node [left,blue] at (B1) {$ e_{m+1} $};
		\node [right,blue] at (B2) {$ e_{m+2} $};
		\node [left,blue] at (B3) {$ e_{2m} $};
		\node [right,blue] at (B4) {$ e_{m+3} $};
		\node [above,blue] at (B5) {$ e_{m+i} $};
		
		\path coordinate (C1) at (-93:1.6)
		coordinate (C2) at (-153:1.4)
		coordinate (C3) at (-23:1.6)
		coordinate (C4) at (150:1.8)
		coordinate (C5) at (30:2.2)
		coordinate (C6) at (106:2.1)
		coordinate (C7) at (74:2.1);
		
		\node[right,blue] at (C1) {$ e_1 $};
		\node[above left,blue] at (C2) {$ e_m $};
		\node[above,blue] at (C3) {$ e_2 $};
		\node[above,blue] at (C4) {$ e_{m-1} $};
		\node[left,blue] at (C5) {$ e_3 $};
		\node[left,blue] at (C6) {$ e_i $};
		\node[right,blue] at (C7) {$ e_{i-1} $};

		\path coordinate (A1) at (-130:2)
		coordinate (A2) at (-40:2)
		coordinate (A3) at (-182:2)
		coordinate (A4) at (0:2)
		coordinate (A5) at (90:2.5)
		coordinate (A6) at (134:2.2)
		coordinate (A7) at (52:2.2);
		
		\node[above right] at (A1) {$ T_1 $};
		\node[left] at (A2) {$ T_2 $};
		\node[above] at (A3) {$ T_m $};
		\node[above] at (A4) {$ T_3 $};
		\node[below] at (A5) {$ T_i $};
		\node[above] at (A6) {$ \cdots $};
		\node[right] at (A7) {$ \cdots $};
		
		\path coordinate (D1) at (-90:2.5)
		coordinate (D2) at (-89:2.5)
		coordinate (D3) at (-159:2.45)
		coordinate (D4) at (-173:2.5)
		coordinate (D5) at (-10:2.6)
		coordinate (D6) at (-29:2.3)
		coordinate (D7) at (151:2.95)
		coordinate (D8) at (28:2.7)
		coordinate (D9) at (88:3.1)
		coordinate (D10) at (108:3.25);
		
		\node [right,font=\tiny] at (D1) {$\alpha_1$};
		\node [left,font=\tiny] at (D2) {$\beta_1$};
		\node [below,font=\tiny] at (D3) {$\alpha_m$};
		\node [below,font=\tiny] at (D4) {$\beta_m$};
		\node [below,font=\tiny] at (D5) {$\alpha_2$};
		\node [right,font=\tiny] at (D6) {$\beta_2$};
		\node [below,font=\tiny] at (D7) {$\alpha_{m-1}$};
		\node [right,font=\tiny] at (D8) {$\beta_3$};
		\node [right,font=\tiny] at (D9) {$\alpha_{i-1}$};
		\node [right,font=\tiny] at (D10) {$\beta_i$};
	\end{tikzpicture}
	\hspace{0.3in}
	%  \label{fig:Tpatch}
	\begin{tikzpicture}[scale=0.8]
		\path coordinate (A) at (0,0)
		coordinate (B) at (60:3)
		coordinate (C) at (120:3)
		coordinate (E) at (180:3)
		coordinate (F) at (0:3)
		coordinate (D) at ($ (C)!-1!(E) $);
		
		\draw[line width=.4pt]  (A) -- (B) -- (C) -- cycle;
		\draw[line width=.4pt]  (A) -- (F) -- (B) -- (D) -- (C) -- (E)-- cycle;
		
		\path coordinate (A1) at ($ (B)!.5!(C) $)
		coordinate (B1) at ($ (A)!.5!(C) $)
		coordinate (C1) at ($ (A)!.5!(B) $)
		coordinate (A2) at ($ (A1)!.4!(C) $)
		coordinate (B2) at ($ (B1)!.4!(A) $)
		coordinate (C2) at ($ (C1)!.4!(B) $)
		coordinate (A3) at ($ (A1)!0.6cm!90:(B) $)
		coordinate (B3) at ($ (B1)!0.6cm!90:(C) $)
		coordinate (C3) at ($ (C1)!0.6cm!90:(A) $)
		coordinate (O) at ($ (A1)!0.66cm!-90:(B) $);
		
		\path   coordinate (C4) at ($ (C)!0.25cm!-10:(A) $)
		coordinate (B4) at ($ (B)!0.5cm!-2:(C) $)
		coordinate (A4) at ($ (A)!0.67cm!1:(B) $)
		coordinate (D1) at ($ (D)!.35!(A1) $)
		coordinate (E1) at ($ (E)!.35!(B1) $)
		coordinate (F1) at ($ (F)!.35!(C1) $);

		\node [below] at (O) {$T_0$};
		\node [below] at (D1) {$T_1$};
		\node [right] at (E1) {$T_2$};
		\node [left] at (F1) {$T_3$};

		\node[below] at (A) {$A_1$};
		\node[right] at (B) {$A_2$};
		\node[left] at (C) {$A_3$};
		\node[above] at (D) {$A_4$};
		\node[left] at (E) {$A_5$};
		\node[right] at (F) {$A_6$};
		
		\node[above,blue] at (A1) {$e_1$};
		\node[left,blue] at (B1) {$e_2$};
		\node[right,blue] at (C1) {$e_3$};
		
		\node[right,font=\tiny] at (C4) {$\alpha_3$};
		\node[below,font=\tiny] at (B4) {$\alpha_2$};
		\node[left,font=\tiny] at (A4) {$\alpha_1$};
	\end{tikzpicture}
	\caption{Illustration of a patch around $O$(left) and a patch around $T_0$(right).}
	\label{fig:basissupp}
\end{figure}
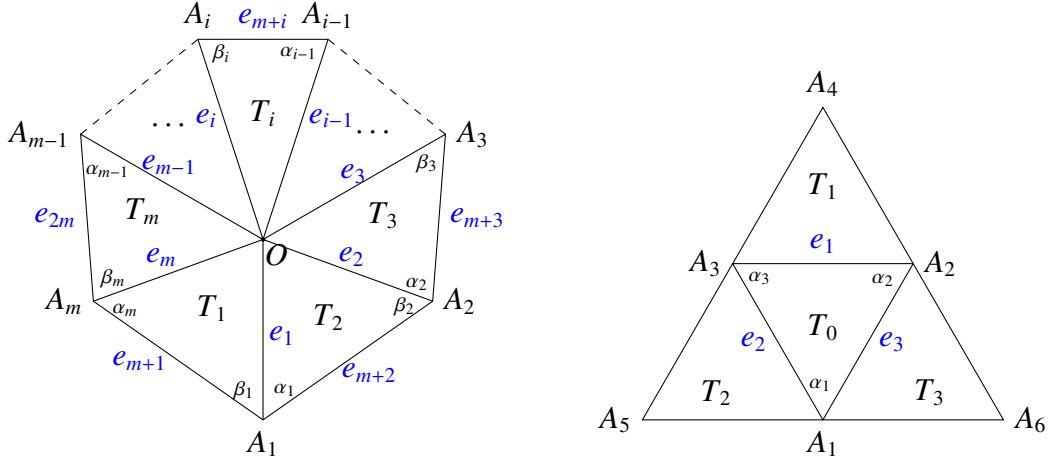

Firstly, associated with the interior vertex patch around an interior vertex $O$(cf. Figure \ref{fig:basissupp}, left), denote $\upsi^O$ as
\begin{equation}\label{math expr:vertex-centered basis}
	\upsi^O=
	\left\{
	\begin{aligned}
		& \frac{d_{m+1}d_{m+2} \sin{(\alpha_1 + \beta_1)}}{2(S_1 + S_2)} \uw_{T_1,e_1}
		+ \frac{d_{m+1}d_{2m} \sin{(\alpha_m + \beta_m)}}{2(S_1 + S_m)} \uw_{T_1,e_m}
		+  \uw_{T_1,e_1,e_m}, \ in \ T_1,\\
		& \frac{d_{m+i}d_{m+i+1} \sin{(\alpha_i + \beta_i)}}{2(S_i + S_{i+1})} \uw_{T_i,e_i}
		+ \frac{d_{m+i}d_{m+i-1} \sin{(\alpha_{i-1} + \beta_{i-1})}}{2(S_i + S_{i-1})} \uw_{T_i,e_{i-1}}
		+ \uw_{T_i,e_i,e_{i-1}}, \ in \ T_i(i=2:m-1),\\
		& \frac{d_{2m}d_{m+1} \sin{(\alpha_m + \beta_m)}}{2(S_m + S_1)} \uw_{T_m,e_m}
		+ \frac{d_{2m} d_{2m-1} \sin{(\alpha_{m-1} + \beta_{m-1})}}{2 (S_m+S_{m-1})} \uw_{T_m,e_{m-1}}
		+ \uw_{T_m,e_m,e_{m-1}}, \ in \ T_m.
	\end{aligned}
	\right.
\end{equation}
Secondly, associated with the interior cell patch around an interior cell $T_0$ (cf. Figure \ref{fig:basissupp}, right), denote $\upsi_{T_0}$ as
\begin{equation}\label{math expr:Tpatch}
	\undertilde{\psi}_{T_0} =
	\left\{
	\begin{aligned}
		& \frac{S_1}{S_1+S_0} \undertilde{w}_{T_1, e_1}, \ in \ T_1,\\
		& \frac{S_2}{S_2+S_0} \undertilde{w}_{T_2, e_2}, \ in \ T_2,\\
		& \frac{S_3}{S_3+S_0} \undertilde{w}_{T_3, e_3}, \ in \ T_3,\\
		& \frac{1}{3}(
		\frac{S_1-2S_0}{S_1+S_0} \undertilde{w}_{T_0,e_1}
		+ \frac{S_2-2S_0}{S_2+S_0} \undertilde{w}_{T_0,e_2}
		+ \frac{S_3-2S_0}{S_3+S_0} \undertilde{w}_{T_0,e_3}
		+  \undertilde{w}_{T_0,e_2,e_3}
		+  \undertilde{w}_{T_0,e_3,e_1}
		+  \undertilde{w}_{T_0,e_1,e_2}
		), \ in \ T_0.
	\end{aligned}
	\right.
\end{equation}

Given an interior cell $T_0$ with vertices $A_i$, $i=1:3$, and neighbored cells $T_j$, $j=1:3$, the cell $T_0$ is covered by $\upsi^{A_i}|_{T_0}$ for $i=1:3$ and $\upsi_{T_j}|_{T_0}$ for $j=0:3$; see Figure \ref{fig:Tpatch_grid} for an illustration. It is easy to know $\{\upsi^{A_i}|_{T_0},\ i=1:3,\ \upsi_{T_j}|_{T_0},\ j=0:3\}$ are linearly dependent. However, any six of them are linearly independent. For conciseness, we show the following lemma.

\begin{lemma}\label{independence on boundary-cell}
	For an interior cell $T_0$ with vertices $A_i,i=1:3$, and neighbored cells $T_j,j=1:3$,(cf. Figure \ref{fig:Tpatch_grid}) the functions $\{\upsi^{A_i}|_{T_0},\ i=2:3,\ \upsi_{T_j}|_{T_0},\ j=0:3\}$ are linearly independent.
\end{lemma}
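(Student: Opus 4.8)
The plan is to exploit that each of the six restrictions lies in the six-dimensional local kernel $\uZ_{T_0}$, for which \eqref{set P_tilde^1+} supplies the explicit basis $\{\uw_{T_0,e_1},\uw_{T_0,e_2},\uw_{T_0,e_3},\uw_{T_0,e_2,e_3},\uw_{T_0,e_3,e_1},\uw_{T_0,e_1,e_2}\}$. Linear independence of six vectors in a six-dimensional space is equivalent to nonsingularity of the $6\times 6$ matrix of their coordinates, so I would first read these coordinates off the closed forms \eqref{math expr:vertex-centered basis} and \eqref{math expr:Tpatch}, matching the patch-local indices used there to the edges and vertices of $T_0$.

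The reading-off produces a matrix with a convenient ``staircase'' pattern. First, restricting \eqref{math expr:Tpatch} to a \emph{neighbour} cell shows $\upsi_{T_j}|_{T_0}=\frac{S_{T_0}}{S_{T_0}+S_{T_j}}\,\uw_{T_0,e_j}$ for $j=1,2,3$, i.e. each neighbour patch restricts to a \emph{nonzero} multiple of a single generator $\uw_{T_0,e_j}$ and carries none of the double-edge generators. Second, the two vertex functions each carry exactly one distinguished double-edge generator with coefficient $1$: since the edge of $T_0$ opposite $A_2$ is $e_2$ and the one opposite $A_3$ is $e_3$, the term $\uw_{T_i,e_i,e_{i-1}}$ of \eqref{math expr:vertex-centered basis} becomes $\uw_{T_0,e_3,e_1}$ in $\upsi^{A_2}|_{T_0}$ and $\uw_{T_0,e_1,e_2}$ in $\upsi^{A_3}|_{T_0}$, while neither involves $\uw_{T_0,e_2,e_3}$. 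Third, $\upsi_{T_0}|_{T_0}$ is the \emph{only} one of the six functions whose expansion contains $\uw_{T_0,e_2,e_3}$, and there with the nonzero coefficient $\frac{1}{3}$.

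With this structure the independence follows by a three-step elimination rather than a full determinant expansion. Suppose $\alpha_0\upsi_{T_0}+\sum_{j=1}^{3}\alpha_j\upsi_{T_j}+\beta_2\upsi^{A_2}+\beta_3\upsi^{A_3}=0$ on $T_0$. Comparing the coefficient of $\uw_{T_0,e_2,e_3}$ forces $\alpha_0=0$; then comparing the coefficients of $\uw_{T_0,e_3,e_1}$ and of $\uw_{T_0,e_1,e_2}$ forces $\beta_2=0$ and $\beta_3=0$ respectively; what remains is $\alpha_1\frac{S_{T_0}}{S_{T_0}+S_{T_1}}\uw_{T_0,e_1}+\alpha_2\frac{S_{T_0}}{S_{T_0}+S_{T_2}}\uw_{T_0,e_2}+\alpha_3\frac{S_{T_0}}{S_{T_0}+S_{T_3}}\uw_{T_0,e_3}=0$, whence $\alpha_1=\alpha_2=\alpha_3=0$ because $\uw_{T_0,e_1},\uw_{T_0,e_2},\uw_{T_0,e_3}$ are independent and the area factors are strictly positive.

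The main obstacle is bookkeeping rather than analysis: one must pin down, from the patch-local labelling in \eqref{math expr:vertex-centered basis} and the cyclic conventions of Figure \ref{fig:basissupp}, exactly which double-edge generator each $\upsi^{A_i}$ contributes, and verify that the three flagged coefficients ($\frac{1}{3}$ for $\upsi_{T_0}$, $1$ for the two vertex functions, and $\frac{S_{T_0}}{S_{T_0}+S_{T_j}}$ for the neighbours) are all nonzero. Once the correspondence between the two indexings is fixed, the staircase pattern is exact and the elimination above closes the argument.
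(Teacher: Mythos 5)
Your proposal is correct and follows essentially the same route as the paper: the paper likewise expands the six restrictions in the basis $\{\uw_{T_0,e_2,e_3},\uw_{T_0,e_3,e_1},\uw_{T_0,e_1,e_2},\uw_{T_0,e_1},\uw_{T_0,e_2},\uw_{T_0,e_3}\}$ of $\uZ_{T_0}$, obtaining exactly the coefficient pattern you describe, and concludes by computing $\det(\mathbf{A})=\frac13\prod_{i=1:3}\frac{S_0}{S_0+S_i}\neq 0$. Your three-step staircase elimination is just the cofactor expansion of that same determinant along the three double-edge columns, so the two arguments coincide in substance.
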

\begin{proof}
	A direct calculation leads to
	$$
	\left(
	\upsi^{A_2}|_{T_0},
	\upsi^{A_3}|_{T_0},
	\upsi_{T_0}|_{T_0},
	\upsi_{T_1}|_{T_0},
	\upsi_{T_2}|_{T_0},
	\upsi_{T_3}|_{T_0}
	\right)^\top
	=
	\mathbf{A}
	\left(
	\uw{}_{T_0,e_2,e_3},
	\uw{}_{T_0,e_3,e_1},
	\uw{}_{T_0,e_1,e_2},
	\uw{}_{T_0,e_1},
	\uw{}_{T_0,e_2},
	\uw{}_{T_0,e_3}\right)^\top
	$$
	 
	$$
	\mbox{with}\ \ \mathbf{A}=
	\left[\begin{array}{cccccc}
		0 &  1 &                0 &  \frac{d_2 d_5 \sin{(\alpha_3+\gamma_3)}}{2(S_1+S_0)}   & 0 &  \frac{d_2 d_8 \sin{(\alpha_1+\beta_1)}}{2(S_3+S_0)}   \\                
		0 &                0 &   1  &  \frac{d_3 d_4 \sin{(\alpha_2+\beta_2)}}{2(S_1+S_0)}   &  \frac{d_3 d_7 \sin{(\alpha_1+\gamma_1)}}{2(S_2+S_0)}   & 0 \\
		\frac{1}{3}  &   \frac{1}{3}  &   \frac{1}{3}  &  \frac{S_1-2S_0}{3(S_1+S_0)}   &  \frac{S_2-2S_0}{3(S_2+S_0)}   &  \frac{S_3-2S_0}{3(S_3+S_0)}   \\
		0 &                0 &                0 &  \frac{S_0}{S_1+S_0}   & 0 & 0 \\
		0 &                0 &                0 & 0 &  \frac{S_0}{S_2+S_0}   & 0 \\
		0 &                0 &                0 & 0 & 0 &  \frac{S_0}{S_3+S_0}  
	\end{array}\right].
	$$
	
	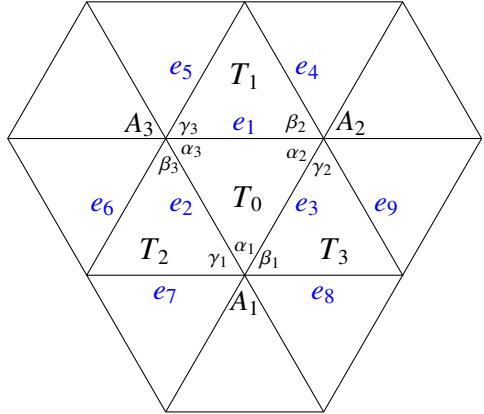
\begin{figure}[htbp]
		\centering
		\begin{tikzpicture}[scale=0.7]
			\path coordinate (A) at (0,0)
			coordinate (B) at (60:3)
			coordinate (C) at (120:3)
			coordinate (E) at (180:3)
			coordinate (F) at (0:3)
			coordinate (D) at ($ (C)!-1!(E) $);
			
			\draw[line width=.4pt]  (A) -- (B) -- (C) -- cycle;
			\draw[line width=.4pt]  (A) -- (F) -- (B) -- (D) -- (C) -- (E)-- cycle;
			
			\path coordinate (A1) at ($ (B)!1.5cm!5:(C) $)
			coordinate (B1) at ($ (A)!.5!(C) $)
			coordinate (C1) at ($ (A)!.5!(B) $)
			coordinate (A2) at ($ (A1)!.4!(C) $)
			coordinate (B2) at ($ (B1)!.4!(A) $)
			coordinate (C2) at ($ (C1)!.4!(B) $)
			coordinate (A3) at ($ (A1)!0.6cm!90:(B) $)
			coordinate (B3) at ($ (B1)!0.6cm!90:(C) $)
			coordinate (C3) at ($ (C1)!0.6cm!90:(A) $)
			coordinate (O) at ($ (A1)!0.56cm!-90:(B) $);
			
			\path   coordinate (C4) at ($ (C)!0.25cm!-10:(A) $)
			coordinate (B4) at ($ (B)!0.5cm!-2:(C) $)
			coordinate (A4) at ($ (A)!0.67cm!-10:(B) $)
			coordinate (D1) at ($ (D)!.35!(A1) $)
			coordinate (E1) at ($ (E)!.35!(B1) $)
			coordinate (F1) at ($ (F)!.35!(C1) $);
			
			\path coordinate (A5) at ($ (A)!0.5cm!-10:(F) $)
			coordinate (A6) at ($ (A)!0.5cm!8:(E) $)
			coordinate (B5) at ($ (B)!0.3cm!-10:(D) $)
			coordinate (B6) at ($ (B)!0.7cm!3:(F) $)
			coordinate (C5) at ($ (C)!0.6cm!-2:(E) $)
			coordinate (C6) at ($ (C)!0.7cm!-10:(D) $);
			
			\path coordinate (C7) at ($ (B)!.5!(D) $)
			coordinate (B7) at ($ (C)!.5!(D) $)
			coordinate (A7) at ($ (C)!.5!(E) $)
			coordinate (C8) at ($ (A)!.5!(E) $)
			coordinate (B8) at ($ (A)!.5!(F) $)
			coordinate (A8) at ($ (B)!.5!(F) $);
			
			\path coordinate (G) at ($(A)!-1!(B)$)
			coordinate (H) at ($(A)!-1!(C)$)
			coordinate (I) at ($(B)!-1!(C)$)
			coordinate (J) at ($(B)!-1!(A)$)
			coordinate (K) at ($(C)!-1!(A)$)
			coordinate (L) at ($(C)!-1!(B)$);
			
			\draw[line width=.4pt]  (A) -- (G)   (A) -- (H)   (B) -- (I)  (B) -- (J)  (C) -- (K)  (C) -- (L);
			\draw[line width=.4pt]  (G) -- (H) -- (F) -- (I) -- (J) -- (D)  -- (K) -- (L)-- (E) -- cycle;
			
			\node [below] at (O) {$T_0$};
			\node [below] at (D1) {$T_1$};
			\node [right] at (E1) {$T_2$};
			\node [left] at (F1) {$T_3$};
			
			\path coordinate (AA) at ($(A)!0.1cm!-145:(B)$)
			coordinate (BB) at ($(B)!0.9cm!174:(A)$)
			coordinate (CC) at ($(C)!0.85cm!185:(A)$);
			\node [below,font=\small] at (AA) {$A_1$};
			\node [below,font=\small] at (BB) {$A_2$};
			\node [below,font=\small] at (CC) {$A_3$};
			
			\node[above,blue,font=\small] at (A1) {$e_1$};
			\node[left,blue,font=\small] at (B1) {$e_2$};
			\node[right,blue,font=\small] at (C1) {$e_3$};
			
			\node[right,font=\tiny] at (C4) {$\alpha_3$};
			\node[below,font=\tiny] at (B4) {$\alpha_2$};
			\node[left,font=\tiny] at (A4) {$\alpha_1$};
			
			\node[above,font=\tiny] at (A5) {$\beta_1$};
			\node[above,font=\tiny] at (A6) {$\gamma_1$};
			\node[left,font=\tiny] at (B5) {$\beta_2$};
			\node[left,font=\tiny] at (B6) {$\gamma_2$};
			\node[right,font=\tiny] at (C5) {$\beta_3$};
			\node[below,font=\tiny] at (C6) {$\gamma_3$};
			
			\node[right,blue,font=\small] at (C7) {$e_4$};
			\node[below,blue,font=\small] at (C8) {$e_7$};
			\node[left,blue,font=\small] at (B7) {$e_5$};
			\node[below,blue,font=\small] at (B8) {$e_8$};
			\node[left,blue,font=\small] at (A7) {$e_6$};
			\node[right,blue,font=\small] at (A8) {$e_9$};
		\end{tikzpicture}
		\caption{Illustration of all kernel basis functions upon one cell}\label{fig:Tpatch_grid}
	\end{figure}
	As  $\displaystyle\det(\mathbf{A})=\frac13 \prod_{i=1:3} \frac{S_0}{S_0+S_i}$ and $\left\{
	\uw{}_{T_0,e_2,e_3},
	\uw{}_{T_0,e_3,e_1},
	\uw{}_{T_0,e_1,e_2},
	\uw{}_{T_0,e_1},
	\uw{}_{T_0,e_2},
	\uw{}_{T_0,e_3}\right\}$ are linearly independent, 
	$\left\{
	\upsi^{A_2}|_{T_0},
	\upsi^{A_3}|_{T_0},
	\upsi_{T_0}|_{T_0},
	\upsi_{T_1}|_{T_0},
	\upsi_{T_2}|_{T_0},
	\upsi_{T_3}|_{T_0}
	\right\}$ are linearly independent. 
\end{proof}

\begin{remark}\label{rem:cell-indep}
	If a cell $T_0$ has one (or more) vertex aligned on the boundary, then it will be covered by no more than two interior vertex patches and be contained in supports of no more than six vertex- or cell-related kernel basis functions; the restriction of these six functions on $T_0$ are linearly independent. 
\end{remark}

\begin{lemma}\label{lemma:kernel basis}
	The functions of $\Phi_h(\mathcal{T}_h):=\{\upsi^A,\ A\in\mathcal{X}_h^i;\ \upsi_T,\ T\in\mathcal{T}_h^i\}$ form a basis of $\uZ_{h0}$. 
\end{lemma}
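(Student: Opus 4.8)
The plan is to turn the statement into a linear-algebra assertion by matching a cardinality count against $\dim\uZ_{h0}$, after which it suffices to prove that the listed functions are linearly independent. Concretely I would argue in three parts: first that every function of $\Phi_h(\mathcal{T}_h)$ indeed lies in $\uZ_{h0}$; second that $\#\Phi_h(\mathcal{T}_h)=\dim\uZ_{h0}$; and third that the functions are linearly independent. Membership is essentially already in hand: the vertex-centred $\upsi^A$ of \eqref{math expr:vertex-centered basis} is, by construction, the generator of the one-dimensional space $\uZ_O$ produced in Lemma \ref{lem:dimker=1}, so it is divergence free, satisfies the tangential continuity of $\uV^{\rm sBDFM}_{h0}$, and extends by zero outside $P_A$; for the cell-centred $\upsi_{T_0}$ of \eqref{math expr:Tpatch} one checks directly that it is divergence free on each cell and that its coefficients match the tangential moments across the three interior edges of $P_{T_0}$ while leaving all nodal parameters on $\partial P_{T_0}$ zero (here one uses that $\uw_{T,e_i}$ has vanishing normal component and tangential integral on the two edges other than $e_i$), so $\upsi_{T_0}\in\uZ_{h0}$ as well.

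For the count, I would read $\dim\uZ_{h0}$ off the exact sequence of Lemma \ref{lem:essbdfm} (equivalently, from the surjectivity of $\dv$ granted by Theorem \ref{thm:infsupsBDFM}), giving $\dim\uZ_{h0}=\dim\uV^{\rm sBDFM}_{h0}-\dim\mathbb{P}^1_{h0}$. Counting continuity parameters of the sBDFM space gives $\dim\uV^{\rm sBDFM}_{h0}=3\#\mathcal{E}_h^i$ (three parameters per interior edge, none on boundary edges), and $\dim\mathbb{P}^1_{h0}=3\#\mathcal{T}_h-1$. Euler's relation for the simply connected $\Omega$, together with $3\#\mathcal{T}_h=2\#\mathcal{E}_h^i+\#\mathcal{E}_h^b$ and $\#\mathcal{X}_h^b=\#\mathcal{E}_h^b$, yields $\#\mathcal{E}_h^i=\#\mathcal{X}_h^i+\#\mathcal{T}_h-1$, hence $\dim\uZ_{h0}=3\#\mathcal{X}_h^i-2$. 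On the other side $\#\Phi_h(\mathcal{T}_h)=\#\mathcal{X}_h^i+\#\mathcal{T}_h^i=\#\mathcal{X}_h^i+(2\#\mathcal{X}_h^i-2)=3\#\mathcal{X}_h^i-2$, so the two numbers coincide and only independence remains.

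For independence, suppose $\sum_{A\in\mathcal{X}_h^i}c_A\upsi^A+\sum_{T\in\mathcal{T}_h^i}d_T\upsi_T\equiv 0$. On a fixed cell $T_0$ the only possibly nonzero terms are $\upsi^A$ for the interior vertices $A$ of $T_0$, the self-function $\upsi_{T_0}$ (present only if $T_0\in\mathcal{T}_h^i$), and the three neighbour functions $\upsi_{T_j}$ (cf. Figure \ref{fig:Tpatch_grid}), at most seven in all. I would sweep from the boundary inward along the vertex layers $\mathcal{X}_h^{b,+k}$. A cell carrying a boundary edge has at most one interior vertex, no self-function, and at most two interior neighbours, so at most three functions are active there; restricting the relation to it and invoking Remark \ref{rem:cell-indep} forces those coefficients to vanish. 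Sweeping to cells of successively larger layer index, each newly visited cell has at least one of its seven associated functions already annihilated (its own $d$-coefficient having been set to zero when an outer neighbour was processed, or an outer neighbour function removed earlier), so at most six functions remain active; by Lemma \ref{independence on boundary-cell} their restrictions to that cell are linearly independent, whence the remaining coefficients — in particular its own $d$-coefficient and the $c$-coefficients of its interior vertices — vanish, and $d=0$ is passed on to its inner neighbours. Since the dual graph is connected and Assumption \ref{assumption grid} guarantees every cell is covered, all coefficients vanish, and together with the count this proves $\Phi_h(\mathcal{T}_h)$ is a basis.

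The delicate point, and the part I expect to be the main obstacle, is precisely the bookkeeping of the sweep in the third part: one must order the visiting of cells so that when a cell is processed its number of still-active functions has already dropped to at most six, which is exactly what lets the local independence lemma bite. The per-cell linear algebra is supplied wholesale by Lemma \ref{independence on boundary-cell} and Remark \ref{rem:cell-indep}; the genuine work lies in verifying that the boundary-to-interior ordering encoded by the layers $\mathcal{X}_h^{b,+k}$ exposes the unknown coefficients one controllable layer at a time, with Assumption \ref{assumption grid} ensuring that this process leaves no cell, and hence no coefficient, untouched.
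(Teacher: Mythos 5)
Your proposal is correct and follows essentially the same route as the paper: the identical dimension count $\#\Phi_h(\mathcal{T}_h)=3\#\mathcal{X}_h^i-2=\dim\uV^{\rm sBDFM}_{h0}-\dim\mathbb{P}^1_{h0}=\dim(\uZ_{h0})$, followed by the same boundary-to-interior sweep over the layers $\mathcal{X}_h^{b,+k}$ using Lemma \ref{independence on boundary-cell} and Remark \ref{rem:cell-indep} to kill the coefficients cell by cell. The only additions are your explicit membership check and the slightly more careful bookkeeping of how many functions remain active on each newly visited cell, both of which are consistent with (and implicit in) the paper's argument.
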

\begin{proof}
	We only have to prove the functions of $\Phi_h(\mathcal{T}_h)$ are linearly independent. Indeed, provided that the set $\Phi_h(\mathcal{T}_h)$ is linearly independent, $\dim({\rm span}(\Phi_h(\mathcal{T}_h))) = \# \mathcal{X}^i_h + \# \mathcal{T}^i_h = 3 \# \mathcal{X}^i_h -2 = 3\# \mathcal{E}^i_h - (3 \# \mathcal{T}_h - 1) =\dim (\uV^{\rm sBDFM}_{h0})-\dim(\mathbb{P}^1_{h0})=\dim(\uV^{\rm sBDFM}_{h0})-\dim(\dv\,\uV^{\rm sBDFM}_{h0})=\dim(\uZ_{h0})$, and thus $\uZ_{h0}={\rm span}\left(\Phi_h(\mathcal{T}_h)\right)$.
	
	Now, given $\displaystyle\upsi_h=\sum_{A\in\mathcal{X}_h^i}c_A\upsi^A+\sum_{T\in\mathcal{T}_h^i}c_T\upsi_T=0$, we are going to show all $c_A$ and $c_T$ are zero. Similar to \cite{Zhang.S2020ima}, we adopt a sweeping process here. Given $a\in\mathcal{X}_h^b$, let $T$ be such that $a$ is a vertex of $T$. Then
	$$
	\upsi_h|_T=\sum_{A\in\mathcal{X}_h^i\cap \overline{T}}c_A\upsi^A|_T+\sum_{T'\in\mathcal{T}_h^i,T'\ \mbox{and}\ T\ \mbox{share\ a\ common\ edge}}c_{T'}\upsi_{T'}|_T=0.
	$$
	By Lemma \ref{independence on boundary-cell} and Remark \ref{rem:cell-indep}, $c_A=0$ for $A\in \mathcal{X}_h^i\cap\overline{T}$ and $c_{T'}=0$ for $T'\in\mathcal{T}_h^i$, such that $T'$ and $T$ share a common edge. Therefore, $c_A=0$ for any vertex $A\in\mathcal{X}_h^i$ that is connected to one boundary vertex $a\in\mathcal{X}_h^b$, and $c_T=0$ for any $T\in\mathcal{T}_h^i$ that connects to a boundary vertex $a\in\mathcal{X}^b_h$. 
	Similarly, we can show 
	$$
	c_A=0\ \forall\,A\in\mathcal{X}_h^{b,+2},\ \ \ c_T=0\ \forall\,T\in\mathcal{T}_h\ \mbox{that\ connects\ to } \mathcal{X}_h^{b,+1}.
	$$
	Repeating the procedure recursively, finally, we obtain
	$$
	c_A=0 \ \forall\, A \in \mathcal{X}_h^{b,+k},\ \ \ 
	c_T=0\ \forall\,T\in\mathcal{T}_h\ \mbox{that\ connects\ to } \mathcal{X}_h^{b,+(k-1)}
	$$
	where $k$ is the number of levels of the triangulation $\mathcal{T}_h$.
	Therefore, $c_A$ and $c_T$ are all zero and the functions of $\Phi_h(\mathcal{T}_h)$ are linearly independent. 
	The proof is completed. 
\end{proof}

\subsubsection{Basis functions of $V^{2+}_{h0}$}
Note that $\curl$ is a bijection from $V^{2+}_{h0}$ onto $\uZ_{h0}$. Therefore, the basis functions of $V^{2+}_{h0}$ are $\{\zeta^A,\ A\in\mathcal{X}_h^i;\ \zeta_T,\ T\in\mathcal{T}_h^i\}$, such that $\curl\,\zeta^A=\upsi^A$ and $\curl\, \zeta_T=\upsi_T$. More precisely(cf. Figure \ref{fig:basissupp}), 
\begin{equation}\label{potential_psiO}
	\zeta^O =
	\left\{
	\begin{aligned}
		& \lambda_0^2
		+ \frac{d_{m+1} d_{m+2} \sin{(\alpha_1+\beta_1)}}{2(S_1+S_2)} \lambda_0 \lambda_1(3\lambda_m-1)
		+ \frac{d_{m+1} d_{2m} \sin{(\alpha_m+\beta_m)}}{2(S_1+S_m)} \lambda_0 \lambda_m(3\lambda_1-1),\ in \ T_1,\\
		& \lambda_0^2
		+ \frac{d_{m+i} d_{m+i+1} \sin{(\alpha_i+\beta_i)}}{2(S_i+S_{i+1})} \lambda_0 \lambda_i(3\lambda_{i-1}-1)
		+ \frac{d_{m+i} d_{m+i-1} \sin{(\alpha_{i-1}+\beta_{i-1})}}{2(S_i+S_{i-1})} \lambda_0 \lambda_{i-1}(3\lambda_i-1),\ in \ T_i,\\
		&\qquad \qquad \qquad \qquad \qquad \qquad \qquad \qquad \qquad \qquad \qquad \qquad \qquad \qquad \qquad   (i=2:m-1)\\
		& \lambda_0^2
		+ \frac{d_{2m} d_{m+1} \sin{(\alpha_m+\beta_m)}}{2(S_m+S_1)} \lambda_0 \lambda_m(3\lambda_{m-1}-1)
		+ \frac{d_{2m} d_{2m-1} \sin{(\alpha_{m-1}+\beta_{m-1})}}{2(S_m+S_{m-1})} \lambda_0 \lambda_{m-1}(3\lambda_m-1),\ in \ T_m,\\
	\end{aligned}
	\right.
\end{equation}
and
\begin{equation}\label{potential_psiO}
	\zeta_{T_0}=
	\left\{
	\begin{aligned}
		& \frac{S_1}{S_1+S_0} \lambda_2\lambda_3(3\lambda_4-1),\ in \ T_1,\\
		& \frac{S_2}{S_1+S_0} \lambda_1\lambda_3(3\lambda_5-1),\ in \ T_2,\\
		& \frac{S_3}{S_1+S_0} \lambda_1\lambda_2(3\lambda_6-1),\ in \ T_3,\\
		& \frac{S_1}{S_1+S_0} \lambda_2\lambda_3(3\lambda_1-1) 
		+ \frac{S_2}{S_1+S_0} \lambda_1\lambda_3(3\lambda_2-1)
		+ \frac{S_3}{S_1+S_0} \lambda_1\lambda_2(3\lambda_3-1)
		- 6 \lambda_1\lambda_2\lambda_3,\ in \ T_0.
	\end{aligned}
	\right.
\end{equation}

\section{An enriched linear -- constant finite element scheme for incompressible flows}
\label{sec:elpair}

\subsection{An enriched linear element space}

Define
$$
\uV^{\rm el}_h:=\{\uv_h\in H(\dv,\Omega):\uv{}_h|_T\in \uP^{1+}(T),\ \int_e\uv{}_h\cdot\mathbf{t}\ \mbox{is\ continuous\ across\ interior\ edge}\ e\},
$$
and
$$
\uV{}^{\rm el}_{h0}:=\{\uv_h\in \uV^{\rm el}_h\cap H_0(\dv,\Omega): \int_e\uv{}_h\cdot\mathbf{t}\ \mbox{vanishes\ on\ boundary\ edge}\ e\}.
$$
\begin{remark}\label{rem:velcontained}
	Evidently, $\uV^{\rm el}_h=\{\uv{}_h\in \uV^{\rm sBDFM}_h:\dv\,\uv{}_h\in\mathbb{P}^0_{h0}\}$, and $\uV^{\rm el}_{h0}=\{\uv{}_h\in \uV^{\rm sBDFM}_{h0}:\dv\,\uv{}_h\in\mathbb{P}^0_{h0}\}$. Particularly, $\{\uv_h\in \uV^{\rm el}_{h0}:\dv\,\uv_h=0\}=\{\uv_h\in \uV^{\rm sBDFM}_{h0}:\dv\,\uv_h=0\}$.
\end{remark}

\begin{lemma}\label{lem:essBDFM}
	The exact sequence holds as
	\begin{equation}
		\{0\} \rightarrow V^{2+}_{h0} \xrightarrow{{\curl}} \uV{}^{\rm el}_{h0} \xrightarrow{{\dv}} \mathbb{P}^0_{h0}  \xrightarrow{\int_\Omega\cdot} \{0\}.
	\end{equation}
\end{lemma}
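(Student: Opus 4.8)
The plan is to obtain the entire sequence as a direct consequence of the analogous {\rm sBDFM} complex (Lemma~\ref{lem:essbdfm}) combined with the characterization of $\uV^{\rm el}_{h0}$ recorded in Remark~\ref{rem:velcontained}. Exactness must be checked at three interior spots: injectivity of $\curl$ on $V^{2+}_{h0}$, exactness at $\uV^{\rm el}_{h0}$, and surjectivity of $\dv$ onto $\mathbb{P}^0_{h0}$; the terminal arrow to $\{0\}$ is automatic because $\mathbb{P}^0_{h0}\subset L^2_0(\Omega)$, so $\int_\Omega$ annihilates the whole space.

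First I would dispose of injectivity. The map $\curl:V^{2+}_{h0}\to\uV^{\rm el}_{h0}$ is merely the map of Lemma~\ref{lem:essbdfm} with its target restricted to the smaller space $\uV^{\rm el}_{h0}$, so injectivity is inherited verbatim. (Equivalently, any $v_h\in V^{2+}_{h0}\subset H^1_0(\Omega)$ with $\curl\,v_h=\undertilde{0}$ has vanishing gradient, hence is a constant in $H^1_0(\Omega)$, hence is zero.)

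For exactness at $\uV^{\rm el}_{h0}$ I would show $\{\uv_h\in\uV^{\rm el}_{h0}:\dv\,\uv_h=0\}=\curl\,V^{2+}_{h0}$. By the last assertion of Remark~\ref{rem:velcontained}, the divergence-free subspace of $\uV^{\rm el}_{h0}$ coincides with that of $\uV^{\rm sBDFM}_{h0}$, and by Lemma~\ref{lem:essbdfm} the latter equals $\curl\,V^{2+}_{h0}$. Exactness at $\uV^{\rm el}_{h0}$ then follows at once.

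The only step demanding a genuine argument is surjectivity of $\dv$ onto $\mathbb{P}^0_{h0}$, and it too becomes painless once Remark~\ref{rem:velcontained} is invoked. Given $q_h\in\mathbb{P}^0_{h0}\subset\mathbb{P}^1_{h0}$, Lemma~\ref{lem:essbdfm} furnishes some $\uv_h\in\uV^{\rm sBDFM}_{h0}$ with $\dv\,\uv_h=q_h$; since $\dv\,\uv_h=q_h\in\mathbb{P}^0_{h0}$, the characterization $\uV^{\rm el}_{h0}=\{\uv_h\in\uV^{\rm sBDFM}_{h0}:\dv\,\uv_h\in\mathbb{P}^0_{h0}\}$ of Remark~\ref{rem:velcontained} places this $\uv_h$ inside $\uV^{\rm el}_{h0}$, whence $\dv\,\uV^{\rm el}_{h0}=\mathbb{P}^0_{h0}$. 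I do not expect a serious obstacle here: the substantive content has been front-loaded into Remark~\ref{rem:velcontained}, namely that $\uV^{\rm el}_{h0}$ and $\uV^{\rm sBDFM}_{h0}$ share the same divergence-free kernel and that the $\mathbb{P}^0_{h0}$-part of the {\rm sBDFM} divergence is already attained within $\uV^{\rm el}_{h0}$. The proof is therefore a short ``reduce and inherit'' deduction rather than a new construction.
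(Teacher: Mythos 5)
Your proof is correct and follows exactly the route the paper intends: the paper states Lemma~\ref{lem:essBDFM} without proof, treating it as an immediate consequence of Lemma~\ref{lem:essbdfm} together with the characterization $\uV^{\rm el}_{h0}=\{\uv_h\in\uV^{\rm sBDFM}_{h0}:\dv\,\uv_h\in\mathbb{P}^0_{h0}\}$ of Remark~\ref{rem:velcontained}, and the surjectivity step you spell out is precisely the argument the authors use later in the proof of Lemma~\ref{lem:stalep0}. You have simply written out the details the paper leaves implicit; nothing is missing.
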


\begin{lemma}
	It can be verified that $\uV^{\rm el}_{h0}=\uV^{\rm ZZZ}_{h0}\cap \uV^{\rm MTW}_{h0}$.
\end{lemma}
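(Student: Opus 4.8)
The plan is to reduce this global identity to a purely local statement about shape-function spaces on a single triangle $T$. Indeed, the three spaces $\uV^{\rm el}_{h0}$, $\uV^{\rm ZZZ}_{h0}$ and $\uV^{\rm MTW}_{h0}$ are built with exactly the same inter-element and boundary constraints, namely $H_0(\dv,\Omega)$-conformity together with continuity of $\int_e\uv\cdot\mathbf{t}$ across interior edges and its vanishing on boundary edges; they differ only in the local shape-function space used on each cell. Therefore I would first prove the local identity
$$
\uP^{1+}(T)=\uP_2(T)\cap\uP^{\rm MTW}(T),
$$
and then observe that intersecting the two global spaces amounts precisely to intersecting their local shape-function spaces cell by cell while retaining the shared constraints, which produces exactly $\uV^{\rm el}_{h0}$.

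For the inclusion $\uP^{1+}(T)\subseteq\uP_2(T)\cap\uP^{\rm MTW}(T)$ I would verify the two memberships separately. The containment $\uP^{1+}(T)\subseteq\uP_2(T)$ is immediate from $\uP^{1+}(T)\subseteq\uP^{2-}(T)\subseteq\uP_2(T)$. For $\uP^{1+}(T)\subseteq\uP^{\rm MTW}(T)$, since $\uP_1(T)\subseteq\uP^{\rm MTW}(T)$ trivially, it suffices to show that the single enriching function $\curl(\lambda_1\lambda_2\lambda_3)$ lies in $\uP^{\rm MTW}(T)$. Using the characterization $\uP^{\rm MTW}(T)=\{\uv\in\uP_3(T):\uv\cdot\mathbf{n}|_{e_i}\in P_1(e_i),\ \dv\,\uv\ \text{constant}\}$, I would note that $\curl(\lambda_1\lambda_2\lambda_3)$ is quadratic (hence in $\uP_3(T)$), is divergence-free (being a curl, so $\dv$ is constant), and has vanishing normal trace on each edge: $\lambda_1\lambda_2\lambda_3$ vanishes identically on $e_i$ (it carries the factor $\lambda_i$), so its tangential derivative along $e_i$ vanishes and hence $\curl(\lambda_1\lambda_2\lambda_3)\cdot\mathbf{n}|_{e_i}=0\in P_1(e_i)$.

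For the reverse inclusion I would simply combine the definitions: any $\uv\in\uP_2(T)\cap\uP^{\rm MTW}(T)$ is a quadratic vector field with linear normal traces on the edges and constant divergence, which is exactly the characterization $\uP^{1+}(T)=\{\uv\in\uP_2(T):\uv\cdot\mathbf{n}|_{e_i}\in P_1(e_i),\ \dv\,\uv\ \text{constant}\}$ obtained from the definitions of $\uP^{2-}(T)$ and $\uP^{1+}(T)$. With the local identity established, the global conclusion follows by bookkeeping: a field in $\uV^{\rm ZZZ}_{h0}\cap\uV^{\rm MTW}_{h0}$ inherits $H_0(\dv,\Omega)$-conformity, the tangential-integral continuity across interior edges, and the boundary conditions from both spaces, while its cell restrictions lie in $\uP_2(T)\cap\uP^{\rm MTW}(T)=\uP^{1+}(T)$, so it belongs to $\uV^{\rm el}_{h0}$; conversely, a field in $\uV^{\rm el}_{h0}$ has cell restrictions in $\uP^{1+}(T)$, which sits inside both $\uP_2(T)$ and $\uP^{\rm MTW}(T)$, subject to the same constraints, and hence lies in both spaces. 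The only genuinely non-routine point is the inclusion $\curl(\lambda_1\lambda_2\lambda_3)\in\uP^{\rm MTW}(T)$, and once that normal-trace-and-divergence check is done, everything else is a matter of matching the shared definitions.
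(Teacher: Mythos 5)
Your proof is correct and follows essentially the same route as the paper: both reduce the identity to the local characterization of $\uP^{1+}(T)$ as quadratic fields with linear normal traces and constant divergence, combined with the observation that all three spaces impose identical inter-element and boundary constraints. The only difference is that you verify the forward inclusion $\uP^{1+}(T)\subset\uP^{\rm MTW}(T)$ explicitly by checking the bubble $\curl(\lambda_1\lambda_2\lambda_3)$, whereas the paper dispatches that direction as immediate from the definitions.
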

\begin{proof}
	By definition, $\uV^{\rm el}_{h0}\subset \uV^{\rm sBDFM}_{h0}\subset \uV^{\rm ZZZ}_{h0}$ and $\uV^{\rm el}_{h0}\subset \uV^{\rm MTW}_{h0}$, namely $\uV^{\rm el}_{h0}\subset\uV^{\rm ZZZ}_{h0}\cap \uV^{\rm MTW}_{h0}$. On the other hand, given $\uv_h\in \uV^{\rm ZZZ}_{h0}\cap \uV^{\rm MTW}_{h0}$, $\uv_h|_T\in \uP_2(T)$, the normal component of $\uv_h|_T$ is piecewise linear, and $\dv\,\uv_h|_T$ is a constant on $T$ for any $T\in \mathcal{T}_h$; namely, $\uv_h|_T\in\uP^{1+}(T)$. Since all these three spaces $\uV^{\rm el}_{h0}$, $\uV^{\rm ZZZ}_{h0}$ and $\uV^{\rm MTW}_{h0}$ possess the same continuity, $\uV^{\rm el}_{h0}\supset\uV^{\rm ZZZ}_{h0}\cap \uV^{\rm MTW}_{h0}$.
\end{proof}

\subsubsection{Basis functions}
Firstly, we present associated with each edge $e\in\mathcal{E}_h^i$ a locally supported function $\upsi_e$. Given $e\in\mathcal{E}_h^i$, it may happen that both ends of $e$ are interior or that one end of $e$ is on the boundary; see Figure \ref{fig:edge-patch illu} below.
\begin{figure}[htbp]
	\centering
	\begin{tikzpicture}[scale=0.7]
		\path coordinate (A1) at (0,0)
		coordinate (A3) at (0:3)
		coordinate (A4) at (60:3)
		coordinate (A6) at (120:3)
		coordinate (A2) at (-60:3)
		coordinate (A5) at (-120:3);

		\node[left] at (A1) {$A_1$};
		\node[below] at (A5) {$A_5$};
		\node[below] at (A2) {$A_2$};
		\node[right] at (A3) {$A_3$};
		\node[right] at (A4) {$A_4$};
		\node[left] at (A6) {$A_6$};
		
		%	\path coordinate (A7) at ($ (A3)!3cm!120:(A6) $)
		%	coordinate (A8) at ($ (A3)!3cm!-120:(A6) $);
		
		\draw[line width=.4pt]  (A1) -- (A2) ;
		\draw[line width=.4pt]  (A1) -- (A3) ;
		\draw[line width=.4pt]  (A1) -- (A4) ;
		\draw[line width=.4pt] (A5) -- (A2) -- (A3) -- (A4) -- (A6) -- (A1) -- cycle ;
		
		\path coordinate (E1) at ($(A2)!1.4cm!-3:(A1)$)
		coordinate (E) at ($(A3)!1.4cm!7:(A1)$)
		coordinate (E4) at ($(A1)!1.6cm!-2:(A4)$)
		coordinate (E2) at ($(A2)!.5!(A3)$)
		coordinate (E3) at ($(A3)!.5!(A4)$);
		
		\node [left,blue,font=\small] at (E1) {$ e_1 $};
		\node [above,blue,font=\small] at (E) {$ e $};
		\node [left,blue,font=\small] at (E4) {$ e_4 $};
		\node [right,blue,font=\small] at (E2) {$ e_2 $};
		\node [right,blue,font=\small] at (E3) {$ e_3 $};
		
		\path coordinate (F1) at ($(A1)!0.22cm!55:(A3)$)
		coordinate (F2) at ($(A2)!0.65cm!3:(A1)$)
		coordinate (F3) at ($(A3)!0.15cm!55:(A1)$);
		
		\node[right,font=\tiny] at (F2) {$ \alpha_2 $};
		\node[above left,font=\tiny] at (F3) {$ \alpha_3 $};
		\node[right,font=\tiny] at (F1) {$ \alpha_1 $};
		
		\path coordinate (B4) at ($(A4)!0.22cm!90:(A6)$)
		coordinate (B3) at ($(A3)!0.15cm!-6:(A1)$)
		coordinate (B1) at ($(A1)!0.35cm!-60:(A3)$);
		
		\node [below left,font=\tiny] at (B3) {$\beta_3$};
		\node [right,font=\tiny] at (B1) {$\beta_1$};
		\node [below,font=\tiny] at (B4) {$\beta_4$};
		
		\path coordinate (T3) at (-90:1.5)
		coordinate (T1) at (-40:1.5)
		coordinate (T2) at (40:1.5)
		coordinate (T4) at (95:1.5);
		
		\node[below] at (T3) {$ T_3 $};
		\node[right] at (T1) {$ T_1 $};
		\node[right] at (T2) {$ T_2 $};
		\node[above] at (T4) {$ T_4 $};		
	\end{tikzpicture}
	\hspace{0.25in}
	\begin{tikzpicture}[scale=0.7]
		\path coordinate (A1) at (0,0)
		coordinate (A3) at (0:3)
		coordinate (A4) at (60:3)
		coordinate (A6) at (120:3)
		coordinate (A2) at (-60:3)
		coordinate (A5) at (-120:3);

		\node[left] at (A1) {$A_1$};
		\node[left] at (A5) {$A_5$};
		\node[below] at (A2) {$A_2$};
		\node[right] at (A3) {$A_3$};
		\node[above] at (A4) {$A_4$};
		\node[left] at (A6) {$A_6$};
		
		%	\path coordinate (A7) at ($ (A3)!3cm!120:(A6) $)
		%	coordinate (A8) at ($ (A3)!3cm!-120:(A6) $);
		
		\draw[line width=.4pt]  (A1) -- (A2) ;
		\draw[line width=.4pt]  (A1) -- (A3) ;
		\draw[line width=.4pt]  (A1) -- (A4) ;
		\draw[line width=.4pt] (A5) -- (A2) -- (A3) -- (A4) -- (A6) -- (A1) -- cycle ;
		
		\path coordinate (E1) at ($(A2)!1.4cm!-3:(A1)$)
		coordinate (E) at ($(A3)!1.4cm!7:(A1)$)
		coordinate (E4) at ($(A1)!1.6cm!-2:(A4)$)
		coordinate (E2) at ($(A2)!.5!(A3)$)
		coordinate (E3) at ($(A3)!.5!(A4)$);
		
		\node [left,blue,font=\small] at (E1) {$ e_1 $};
		\node [above,blue,font=\small] at (E) {$ e $};
		\node [left,blue,font=\small] at (E4) {$ e_4 $};
		\node [right,blue,font=\small] at (E2) {$ e_2 $};
		\node [right,blue,font=\small] at (E3) {$ e_3 $};
		
		\path coordinate (F1) at ($(A1)!0.22cm!55:(A3)$)
		coordinate (F2) at ($(A2)!0.65cm!3:(A1)$)
		coordinate (F3) at ($(A3)!0.15cm!55:(A1)$);
		
		\node[right,font=\tiny] at (F2) {$ \alpha_2 $};
		\node[above left,font=\tiny] at (F3) {$ \alpha_3 $};
		\node[right,font=\tiny] at (F1) {$ \alpha_1 $};
		
		\path coordinate (B4) at ($(A4)!0.22cm!90:(A6)$)
		coordinate (B3) at ($(A3)!0.15cm!-6:(A1)$)
		coordinate (B1) at ($(A1)!0.35cm!-60:(A3)$);
		
		\node [below left,font=\tiny] at (B3) {$\beta_3$};
		\node [right,font=\tiny] at (B1) {$\beta_1$};
		\node [below,font=\tiny] at (B4) {$\beta_4$};
		
		\path coordinate (T3) at (-90:1.5)
		coordinate (T1) at (-40:1.5)
		coordinate (T2) at (40:1.5)
		coordinate (T4) at (95:1.5);
		
		\node[below] at (T3) {$ T_3 $};
		\node[right] at (T1) {$ T_1 $};
		\node[right] at (T2) {$ T_2 $};
		\node[above] at (T4) {$ T_4 $};	
		
		\path coordinate (A8) at ($ (A3)!3cm!120:(A1) $)
		coordinate (A7) at ($ (A3)!3cm!-120:(A1) $);
		
		\draw [line width=.4pt] (A4) -- (A7) -- (A3) -- (A8) -- (A2) ;
		
		\node[right] at (A8) {$A_8$};
		
		\node[right] at (A7) {$A_7$};

		\path coordinate (T6) at ($(A3)!1.7cm!39:(A2)$)
		coordinate (T5) at ($(A3)!1.6cm!35:(A7)$);
		
		\node[below] at (T6) {$T_6$};
		\node[above right] at (T5) {$T_5$};
	\end{tikzpicture}
	\caption{Illustration of basis functions associated with interior edges}\label{fig:edge-patch illu}
\end{figure}
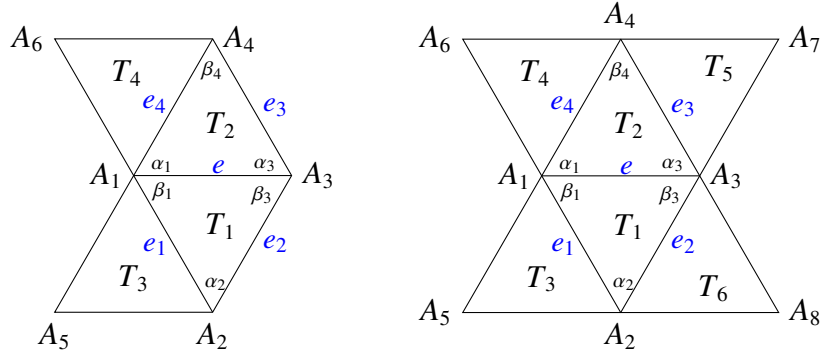

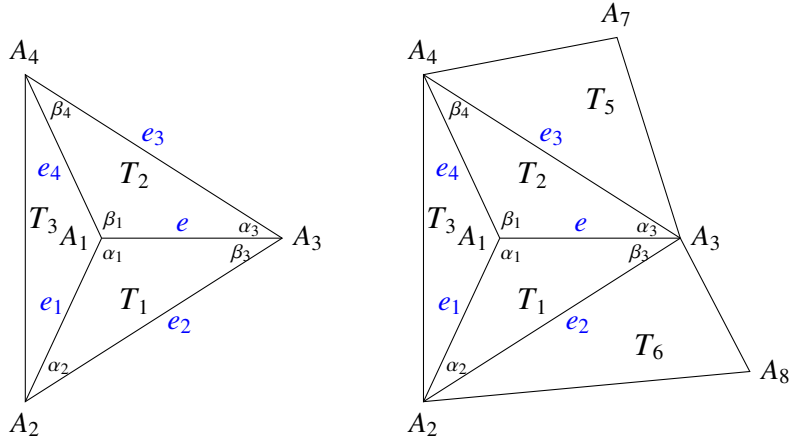
\begin{figure}[thbp]
	\centering
	\begin{tikzpicture}[scale=0.8]
		\path coordinate (A1) at (0,0)
		coordinate (A3) at (0:3)
		coordinate (A2) at (-115:3)
		coordinate (A4) at (115:3);
		
		\node [left,font=\small] at (A1) {$A_1$};
		\node [below,font=\small] at (A2) {$A_2$};
		\node [right,font=\small] at (A3) {$A_3$};
		\node [above,font=\small] at (A4) {$A_4$};

		\path coordinate (T1) at ($(A1)!0.85cm!-50:(A3)$)
		coordinate (T2) at ($(A1)!0.85cm!50:(A3)$)
		coordinate (T3) at ($(A1)!0.6cm!150:(A3)$);
		
		\node [below] at (T1) {$T_1$};
		\node [above] at (T2) {$T_2$};
		\node [left] at (T3) {$T_3$};
		
		\draw[line width=.4pt]  (A1) -- (A2) ;
		\draw[line width=.4pt]  (A1) -- (A3) ;
		\draw[line width=.4pt]  (A1) -- (A4) ;
		\draw[line width=.4pt]  (A2) -- (A3) -- (A4) -- cycle ;

		\path coordinate (E) at ($(A1)!.45!-3:(A3)$)
		coordinate (E1) at ($(A1)!.4!4:(A2)$)
		coordinate (E2) at ($(A2)!.5!-2:(A3)$)
		coordinate (E3) at ($(A3)!.5!(A4)$)
		coordinate (E4) at ($(A1)!.4!-2:(A4)$);

		\node [above,font=\small,blue] at (E) {$e$};
		\node [left,font=\small,blue] at (E1) {$e_1$};
		\node [right,font=\small,blue] at (E2) {$e_2$};
		\node [above,font=\small,blue] at (E3) {$e_3$};
		\node [left,font=\small,blue] at (E4) {$e_4$};
		
		\path coordinate (F1) at ($(A1)!0.2cm!0.5:(A3)$)
		coordinate (B1) at ($(A1)!0.22cm!-10:(A3)$)
		coordinate (F3) at ($(A1)!2.1cm!4:(A3)$)
		coordinate (B3) at ($(A1)!2cm!-7:(A3)$)
		coordinate (F2) at ($(A2)!0.6cm!6:(A1)$)
		coordinate (B4) at ($(A4)!0.65cm!-1:(A1)$);
		
		\node [below,font=\tiny] at (F1) {$\alpha_1$};
		\node [above,font=\tiny] at (B1) {$\beta_1$};
		\node [right,font=\tiny] at (F3) {$\alpha_3$};
		\node [right,font=\tiny] at (B3) {$\beta_3$};
		\node [right,font=\tiny] at (F2) {$\alpha_2$};
		\node [right,font=\tiny] at (B4) {$\beta_4$};
	\end{tikzpicture}
	\hspace{0.25in}
	\begin{tikzpicture}[scale=0.8]
		\path coordinate (A1) at (0,0)
		coordinate (A3) at (0:3)
		coordinate (A2) at (-115:3)
		coordinate (A4) at (115:3)
		coordinate (A7) at ($(A3)!3.5cm!-40:(A4)$)
		coordinate (A8) at ($(A3)!2.5cm!85:(A2)$);
		
		\node [left,font=\small] at (A1) {$A_1$};
		\node [below,font=\small] at (A2) {$A_2$};
		\node [right,font=\small] at (A3) {$A_3$};
		\node [above,font=\small] at (A4) {$A_4$};  	      
		\node [above,font=\small] at (A7) {$A_7$};
		\node [right,font=\small] at (A8) {$A_8$};
		
		\path coordinate (T1) at ($(A1)!0.85cm!-50:(A3)$)
		coordinate (T2) at ($(A1)!0.85cm!50:(A3)$)
		coordinate (T3) at ($(A1)!0.6cm!150:(A3)$)
		coordinate (T5) at ($(A3)!2.3cm!-55:(A1)$)
		coordinate (T6) at ($(A3)!1.5cm!70:(A1)$);
		
		\node [below] at (T1) {$T_1$};
		\node [above] at (T2) {$T_2$};
		\node [left] at (T3) {$T_3$};
		\node [above] at (T5) {$T_5$};
		\node [below] at (T6) {$T_6$};
		
		\draw[line width=.4pt]  (A1) -- (A2) ;
		\draw[line width=.4pt]  (A1) -- (A3) ;
		\draw[line width=.4pt]  (A1) -- (A4) ;
		\draw[line width=.4pt]  (A2) -- (A3) -- (A4) -- cycle ;
		\draw[line width=.4pt]  (A2) -- (A8) -- (A3) -- (A7) -- (A4) ;
		
		\path coordinate (E) at ($(A1)!.45!-3:(A3)$)
		coordinate (E1) at ($(A1)!.4!4:(A2)$)
		coordinate (E2) at ($(A2)!.5!-2:(A3)$)
		coordinate (E3) at ($(A3)!.5!(A4)$)
		coordinate (E4) at ($(A1)!.4!-2:(A4)$);

		\node [above,font=\small,blue] at (E) {$e$};
		\node [left,font=\small,blue] at (E1) {$e_1$};
		\node [right,font=\small,blue] at (E2) {$e_2$};
		\node [above,font=\small,blue] at (E3) {$e_3$};
		\node [left,font=\small,blue] at (E4) {$e_4$};
		
		\path coordinate (F1) at ($(A1)!0.2cm!0.5:(A3)$)
		coordinate (B1) at ($(A1)!0.22cm!-10:(A3)$)
		coordinate (F3) at ($(A1)!2.1cm!4:(A3)$)
		coordinate (B3) at ($(A1)!2cm!-7:(A3)$)
		coordinate (F2) at ($(A2)!0.6cm!6:(A1)$)
		coordinate (B4) at ($(A4)!0.65cm!-1:(A1)$);
		
		\node [below,font=\tiny] at (F1) {$\alpha_1$};
		\node [above,font=\tiny] at (B1) {$\beta_1$};
		\node [right,font=\tiny] at (F3) {$\alpha_3$};
		\node [right,font=\tiny] at (B3) {$\beta_3$};
		\node [right,font=\tiny] at (F2) {$\alpha_2$};
		\node [right,font=\tiny] at (B4) {$\beta_4$};
	\end{tikzpicture}
	\caption{Two cases of degeneration; see Remark \ref{rem:deg} below.}\label{fig:degeneration}
\end{figure}

If $e$ has a boundary vertex(e.g., in Figure \ref{fig:edge-patch illu}(left), $A_1\in \mathcal{X}_h^i, A_3 \in \mathcal{X}_h^b$), denote by
\begin{equation}
	\label{math expr:interior edge one boundary vertex}
	\upsi_e:=
	\left\{
	\begin{aligned}
		& \frac{S_3}{S_3+S_1} \uw_{T_3,e_1}, \ in \ T_3, \\
		& \uy_{T_1,e_1,e}
		+ \frac{d_1 \cos{\alpha_2}}{d_2}\uw_{T_1,e_1,e}
		+ \frac{S_3}{S_3+S_1}\uw_{T_1,e_1}
		+ \frac{\frac12 d_2 d_3 \sin{(\alpha_3 + \beta_3)} - (S_1+S_2)}{S_1+S_2} \uw_{T_1,e}, \ in \ T_1, \\
		& - \uy_{T_2,e_4,e}
		+ \frac{d_4 \cos{\alpha_4}}{d_3}\uw_{T_2,e_4,e}
		+ \frac{S_4}{S_4+S_2}\uw_{T_2,e_4}
		+ \frac{\frac12 d_2 d_3 \sin{(\alpha_3 + \beta_3)} - (S_1+S_2)}{S_1+S_2} \uw_{T_2,e}, \ in \ T_2, \\
		& \frac{S_4}{S_4+S_2} \uw_{T_4,e_4},\ in \ T_4.
	\end{aligned}
	\right.
\end{equation}

If both of the ends of $e$ are interior vertices(e.g., in Figure \ref{fig:edge-patch illu}(right), $A_1, A_3 \in \mathcal{X}_h^i$), denote by
\begin{equation}
	\label{math expr:interior edge zero boundary vertex}
	\upsi_e:=
	\left\{
	\begin{aligned}
		& \frac{S_3}{2(S_3+S_1)} \uw_{T_3,e_1}, \ in \ T_3, \\
		& \frac{S_4}{2(S_4+S_2)} \uw_{T_4,e_4},\ in \ T_4, \\
		& ( \frac{S_3}{2(S_3+S_1)} -1 ) \uw_{T_1,e_1}
		+ (1 - \frac{S_6}{2(S_6+S_1)} ) \uw_{T_1,e_2}
		+ \frac{\frac12 d_2 d_3 \sin{(\alpha_3 + \beta_3)} - \frac12 d_1 d_4 \sin{(\alpha_1 + \beta_1)} }{2(S_1+S_2)} \uw_{T_1,e} \\
		& \qquad \qquad \qquad \qquad \quad
		+ ( \frac{d_2 \cos{\beta_3}}{d} - \frac12 ) \uw_{T_1,e_1,e_2}
		+ \frac12 \uw_{T_1,e_1,e}
		- \frac12 \uw_{T_1,e_2,e}
		+ \uy_{T_1,e_1,e_2} , \ in \ T_1, \\
		& ( \frac{S_4}{2(S_4+S_2)} -1 ) \uw_{T_2,e_4}
		+ (1 - \frac{S_5}{2(S_5+S_2)} ) \uw_{T_2,e_3}
		+ \frac{\frac12 d_2 d_3 \sin{(\alpha_3 + \beta_3)} - \frac12 d_1 d_4 \sin{(\alpha_1 + \beta_1)} }{2(S_1+S_2)} \uw_{T_2,e} \\
		& \qquad \qquad \qquad \qquad \quad
		+ ( \frac12 - \frac{d_4 \cos{\beta_1}}{d} ) \uw_{T_2,e_3,e_4}
		+ \frac12 \uw_{T_2,e_4,e}
		- \frac12 \uw_{T_2,e_3,e}
		- \uy_{T_2,e_3,e_4} , \ in \ T_2, \\
		& - \frac{S_5}{2(S_5+S_2)} \uw_{T_5,e_3}, \ in \ T_5, \\
		& - \frac{S_6}{2(S_6+S_1)} \uw_{T_6,e_2}, \ in \ T_6.
	\end{aligned}
	\right.
\end{equation}

\begin{remark}\label{rem:deg}
	It is still possible that the support of a basis function associated with an interior edge could cover exactly three or five cells. They can be viewed as the degenerated cases, and the function $\upsi_e$ can be defined the same way. To be specific, when $T_3$ and $T_4$ coincide, the pattern in Figure \ref{fig:edge-patch illu}(left) would degenerate to a patch with three cells as shown in Figure \ref{fig:degeneration}(left); 
	moreover, $\upsi_{e}|_{T_3}=\frac{S_3}{S_3+S_1} \uw_{T_3,e_1} + \frac{S_3}{S_3+S_2} \uw_{T_3,e_4}$ 
	and $\upsi_{e}|_{T_i}(i=1,2)$ are same to their counterparts in (\ref{math expr:interior edge one boundary vertex}). 
	Correspondingly, the pattern in Figure \ref{fig:edge-patch illu}(right) would degenerate to a set of five cells as shown in Figure \ref{fig:degeneration}(right); 
	$\upsi_{e}|_{T_3}=\frac{S_3}{(2S_3+S_1)} \uw_{T_3,e_1} + \frac{S_3}{2(S_3+S_2)} \uw_{T_3,e_4}$ and $\upsi_{e}|_{T_i}(i=1,2,5,6)$  keep counterparts as (\ref{math expr:interior edge zero boundary vertex}).
\end{remark}

Now we are going to show all these $\{\upsi_e:\ e\in\mathcal{E}_h^i\}$ and $\{\upsi_T:\ T\in\mathcal{T}_h^i\}$ form a basis of $\uV^{\rm el}_{h0}$.

\begin{lemma}
	$\uV^{\rm el}_{h0}={\rm span}\{\upsi_e,\ e\in\mathcal{E}_h^i;\ \upsi_T,\ T\in\mathcal{T}_h^i\}$.
\end{lemma}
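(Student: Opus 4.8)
The plan is to prove that the listed functions form a basis of $\uV^{\rm el}_{h0}$ in three steps — membership, a dimension count, and linear independence — and, since the count will match $\dim\uV^{\rm el}_{h0}$, to deduce the spanning identity from independence. For membership, each $\upsi_T$ with $T\in\mathcal{T}_h^i$ is one of the kernel basis functions of Lemma \ref{lemma:kernel basis}, hence lies in $\uZ_{h0}\subset\uV^{\rm el}_{h0}$ by Remark \ref{rem:velcontained}. For an edge function $\upsi_e$, every local piece in the formulas (\ref{math expr:interior edge one boundary vertex})–(\ref{math expr:interior edge zero boundary vertex}) is a combination of the functions $\uw_{T,\cdot}$ and $\uy_{T,\cdot}$, so it belongs to $\uP^{1+}(T)$ and has constant divergence on $T$; it then remains to check, directly from those explicit formulas, that $\upsi_e$ has continuous normal component and continuous tangential integral across the interior edges of its support, that both vanish on boundary edges, and that $\upsi_e$ vanishes outside the patch of Figure \ref{fig:edge-patch illu}. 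These are routine computations using $\mathbf{n}_i\times\mathbf{t}_i>0$ and the barycentric identities, and they are exactly what forces the coefficients chosen in (\ref{math expr:interior edge one boundary vertex})–(\ref{math expr:interior edge zero boundary vertex}).

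For the count, the exact sequence of Lemma \ref{lem:essBDFM} gives $\dim\uV^{\rm el}_{h0}=\dim V^{2+}_{h0}+\dim\mathbb{P}^0_{h0}$. The sequence of Lemma \ref{lem:essbdfm} identifies $\curl V^{2+}_{h0}$ with the divergence-free subspace $\uZ_{h0}$, and since $\curl$ is injective on $V^{2+}_{h0}\subset H^1_0(\Omega)$, Lemma \ref{lemma:kernel basis} yields $\dim V^{2+}_{h0}=\dim\uZ_{h0}=\#\mathcal{X}_h^i+\#\mathcal{T}_h^i$; moreover $\dim\mathbb{P}^0_{h0}=\#\mathcal{T}_h-1$. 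Euler's relation $\#\mathcal{X}_h-\#\mathcal{E}_h+\#\mathcal{T}_h=1$ for the simply connected $\Omega$, together with $\#\mathcal{E}_h^b=\#\mathcal{X}_h^b$, gives $\#\mathcal{E}_h^i=\#\mathcal{X}_h^i+\#\mathcal{T}_h-1$. Hence the number of proposed functions $\#\mathcal{E}_h^i+\#\mathcal{T}_h^i$ equals $\dim\uV^{\rm el}_{h0}$, so it suffices to prove linear independence.

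For independence, suppose $\sum_{e\in\mathcal{E}_h^i}c_e\upsi_e+\sum_{T\in\mathcal{T}_h^i}c_T\upsi_T=\undertilde{0}$. Taking the divergence and using that each $\upsi_T$ is divergence free, while $\dv\,\upsi_e$ equals the reciprocal cell area on one of the two cells sharing $e$ and its negative on the other, I find that $\sum_e c_e\upsi_e$ is divergence free, i.e. it lies in $\uZ_{h0}$, so the whole relation becomes an identity inside $\uZ_{h0}$. To force all $c_e=0$ I would sweep from the boundary inward as in the proof of Lemma \ref{lemma:kernel basis}: on a cell $T_0$ carrying a boundary vertex only a fixed, small family of the $\upsi_e$ and $\upsi_T$ is supported, so restricting the relation to $T_0$ and invoking a local independence fact forces the corresponding coefficients to vanish; one then advances layer by layer through the sets $\mathcal{X}_h^{b,+k}$. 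Once all $c_e=0$, the residual relation $\sum_T c_T\upsi_T=\undertilde{0}$ gives $c_T=0$ by Lemma \ref{lemma:kernel basis}.

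The hard part is the local step of this sweep. Unlike the kernel basis, the edge functions $\upsi_e$ have overlapping four- and six-cell supports (Figure \ref{fig:edge-patch illu} and Remark \ref{rem:deg}), so one must first pin down precisely which $\upsi_e$ and $\upsi_T$ are nonzero on a given boundary cell $T_0$ and then verify that their restrictions to $T_0$ are linearly independent. Proving this local statement — the analog of Lemma \ref{independence on boundary-cell} and Remark \ref{rem:cell-indep} adapted to the edge functions, presumably again by exhibiting an explicit transition matrix to the $\uw_{T_0,\cdot}$ and $\uy_{T_0,\cdot}$ and computing a nonzero determinant — is where the genuine work lies; once it is in hand, the sweep and the dimension count reduce the lemma to bookkeeping.
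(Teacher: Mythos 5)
Your overall architecture (membership, dimension count, then linear independence, with spanning deduced from the count) is the mirror image of the paper's proof, and your bookkeeping is correct: $\#\mathcal{E}_h^i+\#\mathcal{T}_h^i=\dim\uV^{\rm el}_{h0}$ does follow from Lemma \ref{lem:essBDFM}, Lemma \ref{lemma:kernel basis} and Euler's relation. But you have chosen the hard direction and then left its crux unproven. The paper proves the \emph{spanning} inclusion directly: first, $\dv\,\upsi_e$ equals $1/S_1$ on one cell adjacent to $e$ and $-1/S_2$ on the other, so $\{\dv\,\upsi_e\}$ spans $\mathbb{P}^0_{h0}$; second, $\uZ_{h0}$ lies in the span because the cell-patch functions $\upsi_T$ are already in the list and each vertex-patch kernel function admits the explicit representation \eqref{eq:repre}, $\upsi^{O}=\sum_{i}\upsi_{e_i}+\frac12\sum_{i:\,A_i\in\mathcal{X}_h^i}(\upsi_{T_i}+\upsi_{T_{i+1}})$, which is itself obtained cheaply from Lemma \ref{lem:dimker=1}: the corrected sum $\sum_i\upsi_{e_i}^*$ is divergence-free and supported in $P_O$, hence a multiple of $\upsi^O$. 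Linear independence is then a free consequence of the dimension count. Your route instead requires a direct independence proof, and you concede that the local step of your sweep is "where the genuine work lies" without supplying it.

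That missing step is not merely laborious; as formulated it would fail. On a single cell $T_0$ the restrictions of the candidate functions live in $\uP^{1+}(T_0)$, which has dimension $7$, whereas the number of functions from $\{\upsi_e\}\cup\{\upsi_T\}$ that are not identically zero on $T_0$ can be as large as about $13$: up to $3$ edge functions for the interior edges of $T_0$, up to $6$ more for edges $e'$ whose four- or six-cell patch contains $T_0$ as an outer cell, and up to $4$ cell functions ($\upsi_{T_0}$ and those of its neighbours). So the analogue of Lemma \ref{independence on boundary-cell} — that all functions supported on $T_0$ restrict to linearly independent elements of $\uP^{1+}(T_0)$ — is false, and a sweep would have to be organized so that at each stage the not-yet-eliminated coefficients on the current cell number at most $7$ and their restrictions are independent; you have not shown that such an ordering exists, and it is not evident that it does. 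Unless you can supply that combinatorial and algebraic argument, you should reverse the logic as the paper does: prove spanning via the divergence computation and the identity \eqref{eq:repre}, and let the dimension count deliver independence.
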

\begin{proof}
	Evidently, $\uV^{\rm el}_{h0}\supset{\rm span}\{\upsi_e,\ e\in\mathcal{E}_h^i;\ \upsi_T,\ T\in\mathcal{T}_h^i\}$. We turn to the other direction. 
	
	Firstly, we show ${\rm span}\{\dv\,\upsi_e,\ e\in\mathcal{E}_h^i\}=\mathbb{P}^0_{h0}.$ For both cases as in Figure \ref{fig:edge-patch illu}, $\dv\, \upsi_e=\frac{1}{S_1}$ on $T_1$ and $-\frac{1}{S_2}$ on $T_2$, and vanishes on all other cells. A simple algebraic argument leads to the assertion. 
	
	Secondly, all functions of $\uZ_{h0}$ can be represented by these functions. We only have to verify it for kernel functions each supported in a vertex patch. 
	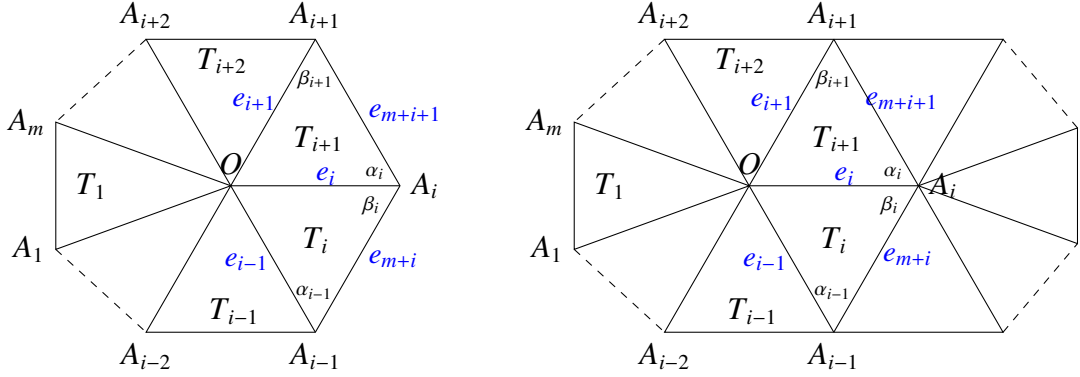
\begin{figure}[thbp]
		%\label{}
		\centering
		\begin{tikzpicture}[scale=0.75]
			\path coordinate (O) at (0,0)
			coordinate (Ai) at (0:3)
			coordinate (Aip1) at (60:3)
			coordinate (Aip2) at (120:3)
			coordinate (Aim1) at (-60:3)
			coordinate (Aim2) at (-120:3)
			coordinate (Am) at (160:3.3)
			coordinate (A1) at (-160:3.3);
			
			\node [above] at (O) {$O$};
			\node [left] at (A1) {$A_1$};
			\node [below] at (Aim2) {$A_{i-2}$};
			\node [below] at (Aim1) {$A_{i-1}$};
			\node [right] at (Ai) {$A_i$};
			\node [above] at (Aip1) {$A_{i+1}$};
			\node [above] at (Aip2) {$A_{i+2}$};
			\node [left] at (Am) {$A_m$};
			
			\path coordinate (Eim1) at ($(O)!1.6cm!3:(Aim1)$)
			coordinate (Ei) at ($(O)!1.7cm!-6:(Ai)$)
			coordinate (Eip1) at ($(O)!1.8cm!-4:(Aip1)$)
			coordinate (Emi1) at ($(Ai)!.5!(Aip1)$)
			coordinate (Emi) at ($(Aim1)!.5!(Ai)$);
			
			\node [left,blue,font=\small] at (Eim1) {$e_{i-1}$};
			\node [above,blue,font=\small] at (Ei) {$e_i$};
			\node [left,blue,font=\small] at (Eip1) {$e_{i+1}$};
			\node [right,blue,font=\small] at (Emi) {$e_{m+i}$};
			\node [right,blue,font=\small] at (Emi1) {$e_{m+i+1}$};

			\path coordinate (T1) at ($(O)!2cm!-20:(A1)$)
			coordinate (Tim1) at ($(O)!1.8cm!-28:(Aim1)$)
			coordinate (Ti) at ($(O)!1.6cm!-20:(Ai)$)
			coordinate (Tip1) at ($(O)!2cm!-21:(Aip1)$)
			coordinate (Tip2) at ($(O)!1.8cm!-25:(Aip2)$);

			\node [left] at (T1) {$T_1$};
			\node [below] at (Tim1) {$T_{i-1}$};
			\node [below] at (Ti) {$T_i$}; 
			\node [below] at (Tip1) {$T_{i+1}$};
			\node [above] at (Tip2) {$T_{i+2}$};
			
			\path coordinate (Fi) at ($(Ai)!0.45cm!10:(O)$)
			coordinate (Bi) at ($(Ai)!0.5cm!-1:(O)$)
			coordinate (Fim1) at ($(Aim1)!0.4cm!-27:(O)$)
			coordinate (Bip1) at ($(Aip1)!0.35cm!32:(O)$);
			
			\node [above,font=\tiny] at (Fi) {$\alpha_i$};
			\node [below,font=\tiny] at (Bi) {$\beta_i$};
			\node [above,font=\tiny] at (Fim1) {$\alpha_{i-1}$};
			\node [below,font=\tiny] at (Bip1) {$\beta_{i+1}$};
			
			\draw[line width=.4pt] (O) -- (Am) -- (A1) -- cycle;
			\draw[line width=.4pt] (O) -- (Aim2) -- (Aim1) -- (Ai) -- (Aip1) -- (Aip2) -- cycle;
			\draw[line width=.4pt] (O) -- (Aim1) ;
			\draw[line width=.4pt] (O) -- (Ai) ;
			\draw[line width=.4pt] (O) -- (Aip1) ;
			
			\draw[dashed] (Aip2) -- (Am) ;
			\draw[dashed] (A1) -- (Aim2) ;
		\end{tikzpicture}
		\hspace{0.25in}
		\begin{tikzpicture}[scale=0.75]
			\path coordinate (O) at (0,0)
			coordinate (Ai) at (0:3)
			coordinate (Aip1) at (60:3)
			coordinate (Aip2) at (120:3)
			coordinate (Aim1) at (-60:3)
			coordinate (Aim2) at (-120:3)
			coordinate (Am) at (160:3.3)
			coordinate (A1) at (-160:3.3);
			
			\node [above] at (O) {$O$};
			\node [left] at (A1) {$A_1$};
			\node [below] at (Aim2) {$A_{i-2}$};
			\node [below] at (Aim1) {$A_{i-1}$};
			\node [right] at (Ai) {$A_i$};
			\node [above] at (Aip1) {$A_{i+1}$};
			\node [above] at (Aip2) {$A_{i+2}$};
			\node [left] at (Am) {$A_m$};
			
			\path coordinate (Eim1) at ($(O)!1.6cm!3:(Aim1)$)
			coordinate (Ei) at ($(O)!1.7cm!-6:(Ai)$)
			coordinate (Eip1) at ($(O)!1.8cm!-4:(Aip1)$)
			coordinate (Emi1) at ($(Ai)!.4!-16:(Aip1)$)
			coordinate (Emi) at ($(Aim1)!.5!3:(Ai)$);
			
			\node [left,blue,font=\small] at (Eim1) {$e_{i-1}$};
			\node [above,blue,font=\small] at (Ei) {$e_i$};
			\node [left,blue,font=\small] at (Eip1) {$e_{i+1}$};
			\node [right,blue,font=\small] at (Emi) {$e_{m+i}$};
			\node [above,blue,font=\small] at (Emi1) {$e_{m+i+1}$};

			\path coordinate (T1) at ($(O)!2cm!-20:(A1)$)
			coordinate (Tim1) at ($(O)!1.8cm!-28:(Aim1)$)
			coordinate (Ti) at ($(O)!1.6cm!-20:(Ai)$)
			coordinate (Tip1) at ($(O)!2cm!-21:(Aip1)$)
			coordinate (Tip2) at ($(O)!1.8cm!-25:(Aip2)$);

			\node [left] at (T1) {$T_1$};
			\node [below] at (Tim1) {$T_{i-1}$};
			\node [below] at (Ti) {$T_i$}; 
			\node [below] at (Tip1) {$T_{i+1}$};
			\node [above] at (Tip2) {$T_{i+2}$};
			
			\path coordinate (Fi) at ($(Ai)!0.45cm!10:(O)$)
			coordinate (Bi) at ($(Ai)!0.5cm!-1:(O)$)
			coordinate (Fim1) at ($(Aim1)!0.4cm!-27:(O)$)
			coordinate (Bip1) at ($(Aip1)!0.35cm!32:(O)$);
			
			\node [above,font=\tiny] at (Fi) {$\alpha_i$};
			\node [below,font=\tiny] at (Bi) {$\beta_i$};
			\node [above,font=\tiny] at (Fim1) {$\alpha_{i-1}$};
			\node [below,font=\tiny] at (Bip1) {$\beta_{i+1}$};
			
			\draw[line width=.4pt] (O) -- (Am) -- (A1) -- cycle;
			\draw[line width=.4pt] (O) -- (Aim2) -- (Aim1) -- (Ai) -- (Aip1) -- (Aip2) -- cycle;
			\draw[line width=.4pt] (O) -- (Aim1) ;
			\draw[line width=.4pt] (O) -- (Ai) ;
			\draw[line width=.4pt] (O) -- (Aip1) ;
			
			\draw[dashed] (Aip2) -- (Am) ;
			\draw[dashed] (A1) -- (Aim2) ;
			
			\path coordinate (D1) at ($(Ai)!3cm!60:(Aim1)$)
			coordinate (D4) at ($(Ai)!3cm!180:(Aim1)$)
			coordinate (D2) at ($(Ai)!3cm!100:(Aim1)$)
			coordinate (D3) at ($(Ai)!3cm!140:(Aim1)$);
			
			\draw[line width=.4pt] (Aim1) -- (D1) -- (Ai) ;
			\draw[line width=.4pt] (Ai) -- (D2) -- (D3) -- cycle;
			\draw[line width=.4pt] (Ai) -- (D4) -- (Aip1);
			\draw[dashed] (D1) -- (D2);
			\draw[dashed] (D3) -- (D4);
		\end{tikzpicture}
		\caption{Illustration of the interior edge $e_i$ with one(left) or two(right) interior vertices}\label{fig:edge with boundary or not}
	\end{figure}
	In fact, for an interior vertex $O$, $P_O=\cup_{i=1:m}T_i$,	$\overline{T}_{i}\cap \overline{T}_{i+1}=e_i$, $T_{m+1}=T_1$ and $e_i$ connects $O$ and $A_i$.	Denote for $i=1:m$ 
	$$
	\upsi_{e_i}^* =
	\left\{
	\begin{aligned}
		& \upsi_{e_i}, \ A_i \in \mathcal{X}_h^b, \\
		& \upsi_{e_i} + \frac12 \upsi_{T_i} + \frac12 \upsi_{T_{i+1}}, \ A_i \in \mathcal{X}_h^i.
	\end{aligned}
	\right.
	$$
	We refer to Figures \ref{fig:basissupp} and \ref{fig:edge-patch illu}, and formula \eqref{math expr:interior edge one boundary vertex}, \eqref{math expr:interior edge zero boundary vertex}, \eqref{math expr:Tpatch} and\eqref{math expr:vertex-centered basis} for the expressions of $\upsi^O$, $\upsi_{T_i}$ and $\upsi_{e_i}$. Then, ${\rm supp}(\upsi_{e_i}^*)=T_{i-1} \cup T_{i} \cup T_{i+1} \cup T_{i+2} \subset P_O$ in any event, and $\dv\,\sum_{i=1:m}\upsi_{e_i}^*=0$. Namely, $\sum_{i=1:m}\upsi_{e_i}^*\in \uZ_O={\rm span}\{\upsi^O\}$. A further calculation leads to $\sum_{i=1:m}\upsi_{e_i}^*=\upsi^O$, namely
	\begin{equation}\label{eq:repre}
		\displaystyle \upsi^{O}=
		\sum_{i=1:m} \upsi_{e_i}
		+ \frac{1}{2}\sum_{i=1:m, \ A_i \in \mathcal{X}_h^i}
		( \upsi_{T_i} +  \undertilde{\psi}_{T_{i+1}} ).
	\end{equation}
	
	Now, $\uV^{\rm el}_{h0}$ and ${\rm span}\{\upsi_e,\ e\in\mathcal{E}_h^i;\ \upsi_T,\ T\in\mathcal{T}_h^i\}$ have the same range under the operator $\dv$, and also $\uZ_{h0}\subset {\rm span}\{\upsi_e,\ e\in\mathcal{E}_h^i;\ \upsi_T,\ T\in\mathcal{T}_h^i\}$. Thus $\uV^{\rm el}_{h0}={\rm span}\{\upsi_e,\ e\in\mathcal{E}_h^i;\ \upsi_T,\ T\in\mathcal{T}_h^i\}$.  
	
	Further, $\dim({\rm span}\{\upsi_e,\ e\in\mathcal{E}_h^i;\ \upsi_T,\ T\in\mathcal{T}_h^i\})=\dim (\uV^{\rm el}_{h0})=\dim (\uZ_{h0}) + \dim (\mathbb{P}^0_{h0}) = \# \mathcal{X}^i_h + \# \mathcal{T}^i_h + \# \mathcal{T}_h - 1 = \# \mathcal{T}^i_h + \# \mathcal{E}^i_h=\#(\{\upsi_e,\ e\in\mathcal{E}_h^i;\ \upsi_T,\ T\in\mathcal{T}_h^i\})$. Therefore, the functions $\{\upsi_e,\ e\in\mathcal{E}_h^i;\ \upsi_T,\ T\in\mathcal{T}_h^i\}$ are linearly independent, and they form a set of basis of $\uV^{\rm el}_{h0}$. The proof is completed.
\end{proof}

\subsection{A lowest degree conservative scheme for the Stokes equation}
Denote 
$$
\mathbb{P}{}^0_h(\mathcal{T}_h):=\{q_h\in L^2(\Omega):q_h|_T\in P_0(T), \forall\, T \in \mathcal{T}_h\} \ \mbox{and}\ \ \mathbb{P}^0_{h0}(\mathcal{T}_h):=\mathbb{P}{}^0_h(\mathcal{T}_h)\cap L^2_0(\Omega).
$$
Based on the new finite element, a discretization scheme of~\eqref{eq:Stokes eq} is: Find $(\uu{}_{h},p_{h})\in \uV{}_{h0}^{\rm el}\times \mathbb{P}^0_{h0}$, such that
\begin{equation}\label{eq:discre Stokes eq}
	\left\{
	\begin{split}
		&\varepsilon^{2}\big(\nabla_{h}\,\uu{}_{h}, \nabla_{h}\,\uv{}_{h}\big)  -( \dv\,\uv{}_{h}, p_{h}) && = ( \uf,\uv{}_{h} ), & \forall\, \uv{}_{h}\in \uV{}_{h0}^{\rm el}, \\
		&(\dv\,\uu{}_{h}, q_{h} )&& = 0, & \forall\, q_{h}\in \mathbb{P}_{h0}^0.
	\end{split}
	\right.
\end{equation}
\begin{lemma}[Stability of $\uV^{\rm el}_{h0}-\mathbb{P}^0_{h0}$]\label{lem:stalep0} 
	It holds uniformly that
	\begin{equation}
		\inf_{q_h\in\mathbb{P}^0_{h0}}\sup_{\uv_h\in\uV^{\rm el}_{h0}}\frac{(\dv\,\uv{}_h,q_h)}{\|q_h\|_{0,\Omega}\|\uv_h\|_{1,h}}\geqslant C>0.
	\end{equation}
\end{lemma}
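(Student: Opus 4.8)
The plan is to \emph{inherit} the inf-sup condition from the stability of the auxiliary pair $\uV^{\rm sBDFM}_{h0}\times\mathbb{P}^1_{h0}$ already established in Theorem~\ref{thm:infsupsBDFM}, rather than to run a fresh macroelement argument for the smaller pair. The structural fact that makes this possible is Remark~\ref{rem:velcontained}: the reduced space is \emph{exactly} $\uV^{\rm el}_{h0}=\{\uv_h\in\uV^{\rm sBDFM}_{h0}:\dv\,\uv_h\in\mathbb{P}^0_{h0}\}$, i.e.\ it collects precisely those sBDFM velocities whose divergence is piecewise constant. Since $\mathbb{P}^0_{h0}\subset\mathbb{P}^1_{h0}$, every target pressure of the small pair is also a legitimate pressure for the big pair, so I only have to check that the velocity produced for it can be taken inside the smaller space.

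First I would upgrade Theorem~\ref{thm:infsupsBDFM} to the existence of a uniformly bounded right inverse of the divergence on the auxiliary pair. Because the exact sequence of Lemma~\ref{lem:essbdfm} shows $\dv\,\uV^{\rm sBDFM}_{h0}=\mathbb{P}^1_{h0}$, the $h$-uniform inf-sup implies that $\dv$ maps the orthogonal complement of its kernel isomorphically onto $\mathbb{P}^1_{h0}$, with an inverse bounded by a constant $C$ independent of $h$. Denote this bounded right inverse by $L_h:\mathbb{P}^1_{h0}\to\uV^{\rm sBDFM}_{h0}$, so that $\dv\,L_h r_h=r_h$ and $\|L_h r_h\|_{1,h}\leqslant C\|r_h\|_{0,\Omega}$ for all $r_h\in\mathbb{P}^1_{h0}$.

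Given $q_h\in\mathbb{P}^0_{h0}$, I would then simply set $\uv_h:=L_h q_h$. Its divergence $\dv\,\uv_h=q_h$ is piecewise constant, so by Remark~\ref{rem:velcontained} the velocity $\uv_h$ \emph{already} lies in $\uV^{\rm el}_{h0}$ --- this is the whole point of the reduction, and no correction term is needed. Consequently $(\dv\,\uv_h,q_h)=\|q_h\|_{0,\Omega}^2$ while $\|\uv_h\|_{1,h}\leqslant C\|q_h\|_{0,\Omega}$, whence $\sup_{\uw_h\in\uV^{\rm el}_{h0}}(\dv\,\uw_h,q_h)/\|\uw_h\|_{1,h}\geqslant \|q_h\|_{0,\Omega}^2/(C\|q_h\|_{0,\Omega})=\tfrac1C\|q_h\|_{0,\Omega}$; dividing by $\|q_h\|_{0,\Omega}$ and taking the infimum over $q_h\in\mathbb{P}^0_{h0}$ yields the claimed stability.

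The only genuine obstacle here is conceptual rather than computational: one must recognize that the right inverse furnished by the coarse pair automatically returns a velocity in the finer space \emph{whenever} the pressure is piecewise constant, which is exactly the identification in Remark~\ref{rem:velcontained}. Once that is in hand, the argument is a soft ``reduce and inherit'' step requiring neither a macroelement partition nor any explicit basis manipulation. The one point I would verify carefully is that the right inverse can be chosen with an $h$-uniform bound; this is the standard equivalence between a uniform inf-sup constant and a uniformly bounded right inverse of $\dv$, and it is precisely where the uniformity in Theorem~\ref{thm:infsupsBDFM} is consumed.
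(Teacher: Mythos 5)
Your proposal is correct and is essentially the paper's own argument: the paper likewise takes $q_h\in\mathbb{P}^0_{h0}\subset\mathbb{P}^1_{h0}$, invokes the sBDFM inf-sup condition to produce $\uv_h\in\uV^{\rm sBDFM}_{h0}$ with $\dv\,\uv_h=q_h$ and $\|\uv_h\|_{1,h}\leqslant C\|q_h\|_{0,\Omega}$, and then observes via Remark~\ref{rem:velcontained} that this $\uv_h$ already lies in $\uV^{\rm el}_{h0}$. Your write-up merely makes explicit the standard equivalence between the uniform inf-sup constant and a uniformly bounded right inverse of $\dv$, which the paper leaves implicit.
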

\begin{proof}
	Given $q_h\in \mathbb{P}^0_{h0}\subset\mathbb{P}^1_{h0}$, there exists $\uv{}_h\in \uV^{\rm sBDFM}_{h0}$, such that $\|\uv_h\|_{1,h}\leqslant C\|q_h\|_{0,\Omega}$ and $\dv\,\uv_h=q_h$, which implies $\uv_h\in \uV^{\rm el}_{h0}$. The proof is completed.
\end{proof}

\begin{lemma}\label{lem:approx}
	Given $\uw\in\uH^2(\Omega)$, it holds that
	\begin{equation}\label{eq:appvel}
		\inf_{\uv_h\in\uV^{\rm el}_h}\|\uw-\uv_h\|_{1,h}\leqslant Ch\|\uw\|_{2,\Omega}.
	\end{equation}
	
	Given $\uw\in\uH^2(\Omega)\cap \uH^1_0(\Omega)$ such that $\dv\,\uw=0$, it holds that
	\begin{equation}\label{eq:appvelker}
		\inf_{\uv_h\in \uV{}^{\rm sBDFM}_{h0},\,\dv\,\uv_h=0}\|\uw-\uv_h\|_{1,h}\leqslant Ch\|\uw\|_{2,\Omega}.
	\end{equation}
\end{lemma}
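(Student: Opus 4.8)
I would treat the two inequalities separately and dispose of \eqref{eq:appvelker} first, since it is nothing other than the conclusion of Lemma~\ref{lem:kernelappr} with $\uw$ playing the role of $\uphi$: the hypotheses ($\uw\in\uH^2(\Omega)\cap\uH^1_0(\Omega)$ with $\dv\,\uw=0$), the admissible set ($\upsi_h\in\uV^{\rm sBDFM}_{h0}$ with $\dv\,\upsi_h=0$) and the right-hand side coincide verbatim, so \eqref{eq:appvelker} follows by directly invoking that lemma. The real content is \eqref{eq:appvel}, and the plan there is an \emph{interpolate-then-correct} argument: start from the sBDFM nodal interpolation, whose divergence is only piecewise linear, and subtract a small field that restores a piecewise-constant divergence, which by the local characterization behind Remark~\ref{rem:velcontained} is precisely the membership requirement for $\uV^{\rm el}_h$.

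Concretely, I would set $\uv_h^{(0)}:=\Pi_h\uw\in\uV^{\rm sBDFM}_h$; the interpolation estimate for $\Pi_h$ (with $s=2$) gives $\|\uw-\uv_h^{(0)}\|_{1,h}\leqslant Ch\|\uw\|_{2,\Omega}$. Let $P_h^0$ denote the cellwise $L^2$-projection onto piecewise constants and put $g_h:=(I-P_h^0)\dv\,\uv_h^{(0)}$; by construction $\int_T g_h\,dx=0$ on each cell, so $g_h\in\mathbb{P}^1_{h0}$. By the exactness at $\mathbb{P}^1_{h0}$ in Lemma~\ref{lem:essbdfm}, $g_h$ lies in $\dv\,\uV^{\rm sBDFM}_{h0}$, and the inf-sup bound of Theorem~\ref{thm:infsupsBDFM} furnishes a corrector $\uz_h\in\uV^{\rm sBDFM}_{h0}$ with $\dv\,\uz_h=g_h$ and $\|\uz_h\|_{1,h}\leqslant C\|g_h\|_{0,\Omega}$. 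Then $\uv_h:=\uv_h^{(0)}-\uz_h$ satisfies $\dv\,\uv_h=P_h^0\dv\,\uv_h^{(0)}$, which is piecewise constant; since $\uv_h\in\uV^{\rm sBDFM}_h$, this forces $\uv_h|_T\in\uP^{1+}(T)$ on every $T$, i.e.\ $\uv_h\in\uV^{\rm el}_h$, and the triangle inequality $\|\uw-\uv_h\|_{1,h}\leqslant\|\uw-\uv_h^{(0)}\|_{1,h}+\|\uz_h\|_{1,h}$ delivers the bound once $\|g_h\|_{0,\Omega}$ is controlled.

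The crux, and the only step I expect to be a genuine obstacle, is showing that the corrector is of order $h$ rather than merely bounded; this reduces to the first-order smallness of $g_h$. I would split
\[
\|g_h\|_{0,\Omega}\leqslant\|\dv(\uv_h^{(0)}-\uw)\|_{0,\Omega}+\|(I-P_h^0)\dv\,\uw\|_{0,\Omega},
\]
using $L^2$-stability of $P_h^0$ on the first piece. The first term is $\leqslant C|\uw-\uv_h^{(0)}|_{1,h}\leqslant Ch\|\uw\|_{2,\Omega}$ from the $\Pi_h$ estimate, while the second is the standard first-order error of the piecewise-constant projection applied to $\dv\,\uw\in H^1(\Omega)$, hence also $\leqslant Ch\|\uw\|_{2,\Omega}$. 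Everything else in the argument is structural—surjectivity of $\dv$ from the finite element Stokes complex of Lemma~\ref{lem:essbdfm}, the bounded right inverse supplied by Theorem~\ref{thm:infsupsBDFM}, and the local identification of $\uV^{\rm el}_h$—so once this estimate is in hand the proof closes.
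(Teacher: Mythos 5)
Your proposal is correct, and for \eqref{eq:appvelker} it coincides with the paper: both simply invoke Lemma~\ref{lem:kernelappr} (together with Remark~\ref{rem:velcontained}, which identifies the two divergence-free kernels). For \eqref{eq:appvel}, however, you take a genuinely different and considerably heavier route than the paper. The paper's entire argument is the observation that the conforming vector-valued Lagrange $\uP_1$ space is contained in $\uV^{\rm el}_h$ --- a continuous piecewise linear field lies locally in $\uP_1(T)\subset\uP^{1+}(T)$, is tangentially and normally continuous, and has piecewise constant divergence --- so the standard $O(h)$ estimate for (quasi-)interpolation into $\uP_1$ gives \eqref{eq:appvel} immediately. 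Your interpolate-then-correct construction (sBDFM interpolant, then a divergence corrector supplied by the exactness of Lemma~\ref{lem:essbdfm} and the bounded right inverse from Theorem~\ref{thm:infsupsBDFM}, with the key smallness estimate $\|(I-P_h^0)\dv\,\Pi_h\uw\|_{0,\Omega}\leqslant Ch\|\uw\|_{2,\Omega}$ obtained by a triangle inequality) is sound in every step I can check, and it buys something the one-liner does not: an approximant in $\uV^{\rm el}_h$ whose divergence is a controlled cellwise projection, and a template that would still work if $\uV^{\rm el}_h$ contained no conforming subspace. But for the statement as posed it is overkill; you could have saved yourself the corrector entirely by noticing the $\uP_1$ inclusion.
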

\begin{proof}
	Since linear element space is contained in $\uV^{\rm el}_{h0}$, \eqref{eq:appvel} holds directly. From Lemma \ref{lem:kernelappr} and Remark \ref{rem:velcontained}, \eqref{eq:appvelker} follows. The proof is completed. 
\end{proof}

The system \eqref{eq:discre Stokes eq} is uniformly well-posed by Brezzi's theory as below. 
\begin{lemma}
	The problem \eqref{eq:discre Stokes eq} admits a unique solution pair $(\uu_h,p_h)$, and
	\begin{equation}
		\varepsilon\|\uu{}_h\|_{1,h}+\frac{1}{\varepsilon}\|p_h\|_{0,\Omega}\cequiv \frac{1}{\varepsilon}\|\uf\|_{-1,h},
	\end{equation}
	where $\|\uf\|_{-1,h}:=\sup_{\uv_h\in \uV^{\rm el}_{h0}}\frac{(\uf,\uv_h)}{\|\uv_h\|_{1,h}}$.
\end{lemma}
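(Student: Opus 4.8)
The plan is to read \eqref{eq:discre Stokes eq} as a saddle-point problem and invoke Brezzi's theory, the delicate point being to keep track of the powers of $\varepsilon$ so that the coupled quantity $\varepsilon\|\uu_h\|_{1,h}+\frac1\varepsilon\|p_h\|_{0,\Omega}$ comes out equivalent to $\frac1\varepsilon\|\uf\|_{-1,h}$. I set $a(\uu_h,\uv_h):=\varepsilon^2(\nabla_h\,\uu_h,\nabla_h\,\uv_h)$ and $b(\uv_h,q_h):=-(\dv\,\uv_h,q_h)$, and verify the three Brezzi hypotheses on $\uV^{\rm el}_{h0}\times\mathbb{P}^0_{h0}$: continuity of $a$ and $b$, coercivity of $a$, and the inf-sup condition for $b$.

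Continuity is immediate from Cauchy--Schwarz: $|a(\uu_h,\uv_h)|\leqslant\varepsilon^2\|\uu_h\|_{1,h}\|\uv_h\|_{1,h}$ and $|b(\uv_h,q_h)|\leqslant\|\dv\,\uv_h\|_{0,\Omega}\|q_h\|_{0,\Omega}\leqslant C\|\uv_h\|_{1,h}\|q_h\|_{0,\Omega}$. For coercivity, since functions in $\uV^{\rm el}_{h0}$ carry homogeneous boundary data and enough weak continuity (normal continuity inherited from $H_0(\dv,\Omega)$ together with continuity of the tangential integrals across interior edges), the standard discrete Poincar\'e--Friedrichs inequality gives $\|\uv_h\|_{0,\Omega}\leqslant C|\uv_h|_{1,h}$; hence $a(\uv_h,\uv_h)=\varepsilon^2|\uv_h|_{1,h}^2\geqslant c\,\varepsilon^2\|\uv_h\|_{1,h}^2$ on all of $\uV^{\rm el}_{h0}$, so in particular on the discrete kernel of $b$. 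The inf-sup condition for $b$ is precisely Lemma \ref{lem:stalep0}, whose constant is independent of both $\varepsilon$ and $h$. By Brezzi's theorem, \eqref{eq:discre Stokes eq} therefore admits a unique solution pair $(\uu_h,p_h)$.

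For the quantitative estimate, I insert these explicit constants into the usual Brezzi a priori bounds (with vanishing right-hand side in the second equation). Because the coercivity constant scales like $\varepsilon^2$ while the continuity norm of $a$ also carries the factor $\varepsilon^2$, their ratio is $O(1)$; together with the $\varepsilon$-independent inf-sup constant this yields $\|\uu_h\|_{1,h}\leqslant C\varepsilon^{-2}\|\uf\|_{-1,h}$ and $\|p_h\|_{0,\Omega}\leqslant C\|\uf\|_{-1,h}$. Multiplying the first by $\varepsilon$ and the second by $\varepsilon^{-1}$ gives the ``$\leqslant$'' half of the asserted equivalence. The reverse half is direct: for any $\uv_h\in\uV^{\rm el}_{h0}$, the first equation of \eqref{eq:discre Stokes eq} and Cauchy--Schwarz yield $(\uf,\uv_h)\leqslant\varepsilon^2\|\uu_h\|_{1,h}\|\uv_h\|_{1,h}+C\|p_h\|_{0,\Omega}\|\uv_h\|_{1,h}$, so dividing by $\|\uv_h\|_{1,h}$, taking the supremum over $\uv_h\in\uV^{\rm el}_{h0}$, and then dividing by $\varepsilon$ produces $\frac1\varepsilon\|\uf\|_{-1,h}\leqslant\varepsilon\|\uu_h\|_{1,h}+\frac C\varepsilon\|p_h\|_{0,\Omega}$, which closes the equivalence $\cequiv$.

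The one genuinely delicate point is this bookkeeping of the $\varepsilon$ powers: one must exploit that the inf-sup constant of Lemma \ref{lem:stalep0} is $\varepsilon$-independent and that the coercivity and continuity of $a$ share the common factor $\varepsilon^2$, so that the pressure bound loses no power of $\varepsilon$. Everything else---the continuity bounds, the discrete Poincar\'e inequality, and the reverse estimate---is routine.
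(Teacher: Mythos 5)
Your proposal is correct and follows essentially the same route as the paper, which simply states that one verifies Brezzi's conditions with respect to the $\varepsilon$-parametrized norms; your explicit tracking of the $\varepsilon$-powers (coercivity and continuity of $a$ both scaling as $\varepsilon^2$, the inf-sup constant of Lemma \ref{lem:stalep0} being $\varepsilon$-independent) is exactly the content of that one-line argument, written out. The resulting bounds $\|\uu_h\|_{1,h}\leqslant C\varepsilon^{-2}\|\uf\|_{-1,h}$ and $\|p_h\|_{0,\Omega}\leqslant C\|\uf\|_{-1,h}$, together with your reverse estimate, correctly yield the asserted equivalence.
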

\begin{proof}
	We only have to verify Brezzi's condition with respect to the parametrized norms.
\end{proof}

\begin{theorem}
	Let $(\uu,p)$ and $(\uu{}_{h},p_{h})$ be the solutions of~\eqref{eq:Stokes vf} and \eqref{eq:discre Stokes eq}, respectively. If $\uu\in \uH^2(\Omega)$ and $p\in H^1(\Omega)$, then
	\begin{equation}
		\|\uu-\uu_h\|_{1,h}\leqslant Ch\|\uu\|_{2,\Omega},\,\ \ \ \|p-p_h\|_{0,\Omega}\leqslant Ch(\varepsilon^2\|\uu\|_{2,\Omega}+\|p\|_{1,\Omega}).
	\end{equation}
	Here the constant $C$ does not depend on the parameter $\varepsilon$.
\end{theorem}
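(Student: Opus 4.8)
The plan is to prove the two bounds separately, taking advantage of the conservation property of the scheme. \textbf{Step 1 (the discrete velocity is exactly solenoidal).} By Lemma \ref{lem:essBDFM} we have $\dv\,\uu_h\in\mathbb{P}^0_{h0}$, and the second equation of \eqref{eq:discre Stokes eq} gives $(\dv\,\uu_h,q_h)=0$ for all $q_h\in\mathbb{P}^0_{h0}$; hence $\dv\,\uu_h=0$, i.e.\ $\uu_h\in\uZ_{h0}$. Restricting the first equation of \eqref{eq:discre Stokes eq} to test functions $\uv_h\in\uZ_{h0}$ annihilates the pressure term and yields the pure kernel problem $\varepsilon^2(\nabla_h\,\uu_h,\nabla_h\,\uv_h)=(\uf,\uv_h)$ for all $\uv_h\in\uZ_{h0}$.

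\textbf{Step 2 (pressure-robust velocity estimate).} I would substitute $\uf=-\varepsilon^2\Delta\,\uu+\nabla\,p$ into the right-hand side. Since each $\uv_h\in\uZ_{h0}$ is $H(\dv)$-conforming with $\dv\,\uv_h=0$ and vanishing normal trace on $\partial\Omega$, and since $p\in H^1(\Omega)$ is single valued across edges, integration by parts gives $(\nabla\,p,\uv_h)=-(p,\dv\,\uv_h)=0$; thus $(\uf,\uv_h)=-\varepsilon^2(\Delta\,\uu,\uv_h)$ and the factor $\varepsilon^2$ cancels out of the kernel equation. Applying the second Strang lemma on $\uZ_{h0}$ (coercivity of $(\nabla_h\cdot,\nabla_h\cdot)$ being furnished by a broken Poincar\'e--Friedrichs inequality) reduces the error to an approximation part and a consistency part,
\begin{equation*}
	|\uu-\uu_h|_{1,h}\leqslant C\Big(\inf_{\uw_h\in\uZ_{h0}}|\uu-\uw_h|_{1,h}+\sup_{\uv_h\in\uZ_{h0}}\frac{|E_h(\uv_h)|}{|\uv_h|_{1,h}}\Big),\quad E_h(\uv_h):=\sum_{e\in\mathcal{E}_h}\int_e\frac{\partial\uu}{\partial\mathbf{n}_e}\cdot\jump{\uv_h}.
\end{equation*}
The approximation term is $O(h)\|\uu\|_{2,\Omega}$ by \eqref{eq:appvelker} of Lemma \ref{lem:approx} together with Remark \ref{rem:velcontained}. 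For the consistency term I would use that $\jump{\uv_h}\cdot\mathbf{n}_e=0$ and $\int_e\jump{\uv_h}\cdot\mathbf{t}_e=0$ to subtract the edge mean of $(\partial\uu/\partial\mathbf{n}_e)\cdot\mathbf{t}_e$, and then a trace inequality with the Bramble--Hilbert lemma to obtain $|E_h(\uv_h)|\leqslant Ch\,|\uu|_{2,\Omega}\,|\uv_h|_{1,h}$. Combining these with the broken Poincar\'e inequality yields $\|\uu-\uu_h\|_{1,h}\leqslant Ch\|\uu\|_{2,\Omega}$, with a constant independent of both $\varepsilon$ and $p$.

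\textbf{Step 3 (pressure estimate).} Let $q_h$ be the $L^2(\Omega)$-orthogonal projection of $p$ onto $\mathbb{P}^0_h$; since $p\in L^2_0(\Omega)$ we have $q_h\in\mathbb{P}^0_{h0}$ and $\|p-q_h\|_{0,\Omega}\leqslant Ch|p|_{1,\Omega}$. By the inf-sup condition of Lemma \ref{lem:stalep0} it suffices to bound $(\dv\,\uv_h,p_h-q_h)$ for $\uv_h\in\uV^{\rm el}_{h0}$. Using the discrete momentum equation, $\uf=-\varepsilon^2\Delta\,\uu+\nabla\,p$, and the identity $(\nabla\,p,\uv_h)=-(p,\dv\,\uv_h)=-(q_h,\dv\,\uv_h)$ --- valid because $\dv\,\uv_h$ is piecewise constant and hence only sees the projection $q_h$ --- the pure-pressure contributions cancel and one is left with
\begin{equation*}
	(\dv\,\uv_h,p_h-q_h)=\varepsilon^2(\nabla_h(\uu_h-\uu),\nabla_h\uv_h)+\varepsilon^2 E_h(\uv_h).
\end{equation*}
Both terms are controlled by $C\varepsilon^2 h\|\uu\|_{2,\Omega}\,\|\uv_h\|_{1,h}$ via Step 2 and the consistency bound, so $\|p_h-q_h\|_{0,\Omega}\leqslant C\varepsilon^2 h\|\uu\|_{2,\Omega}$; the triangle inequality then gives $\|p-p_h\|_{0,\Omega}\leqslant Ch(\varepsilon^2\|\uu\|_{2,\Omega}+|p|_{1,\Omega})$.

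\textbf{Main obstacle.} The crux is the consistency estimate for $E_h$: it is precisely here that the nonconformity enters, and the $O(h)$ rate hinges on the two continuity properties built into $\uV^{\rm el}_{h0}$ --- continuity of the normal trace and continuity of the edge mean of the tangential trace --- which render the edge jumps orthogonal to constants and to normal directions. Establishing the two pressure-robust cancellations, namely $(\nabla\,p,\uv_h)=0$ on $\uZ_{h0}$ and $(p,\dv\,\uv_h)=(q_h,\dv\,\uv_h)$, is the conceptual heart that decouples the velocity error from $p$ and removes the spurious $\varepsilon^{-1}$ that a naive energy argument would produce.
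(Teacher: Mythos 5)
Your proposal is correct and follows exactly the route the paper intends but omits: the strict conservation property ($\dv\,\uu_h=0$, so the velocity error decouples from $p$ and from $\varepsilon$), Lemma \ref{lem:approx} for the approximation term, the standard Strang/consistency argument using the normal-trace and tangential-edge-mean continuity of $\uV^{\rm el}_{h0}$, and the inf-sup condition of Lemma \ref{lem:stalep0} for the pressure. The two cancellation identities you isolate, $(\nabla\,p,\uv_h)=0$ on $\uZ_{h0}$ and $(p,\dv\,\uv_h)=(q_h,\dv\,\uv_h)$, are precisely the content of the paper's remark that ``the solution of $\uu$ can be completely separated from $p$.''
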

\begin{proof}
	The argument is quite standard, and we omit the details here. We only have to note that, since the scheme is strictly conservative, the solution of $\uu$ can be completely separated from $p$, and Lemma \ref{lem:approx} works here.
\end{proof}

\begin{remark}\label{rem:vp1}
	A further reduction of $\uV^{\rm el}_h$ leads to the spaces
	\begin{equation}
		\uV^1_{h}:=\{\uv_h\in H(\dv,\Omega):\ \uv_h|_T\in\uP_1(T),\ \forall\,T\in\mathcal{T},\ \int_e\uv_h\cdot\mathbf{t}\ \mbox{is\ continuous\ across}\ e\in\mathcal{E}_h^i\}
	\end{equation} 
	and 
	\begin{equation}
		\uV^1_{h0}:=\{\uv_h\in \uV^1_h\cap H_0(\dv,\Omega),\ \int_e\uv_h\cdot\mathbf{t}=0\ \mbox{on\ boundary\ edges}\ e\in\mathcal{E}_h^b\}.
	\end{equation}
	The pair $\uV^1_{h0}-\mathbb{P}^0_{h0}$ may be viewed as the most natural, if not the only, $\uP_1-P_0$ pair for the Stokes problem. Generally, this pair is not stable; we refer to Appendix \ref{sec:app} for a numerical verification. This way, we view the $\uV^{\rm el}_{h0}-\mathbb{P}^0_{h0}$ pair as a {\bf lowest-degree} stable conservative pair for the Stokes problem on general triangulations.
\end{remark}

\section{Numerical phenomena for eigenvalue problems}
\label{sec:num}

In this section, we test the numerical performance of the scheme for the Stokes eigenvalue problem: find $(\uu,p)\in\uH^1_0(\Omega)\times L^2_0(\Omega)$, such that 
\begin{equation}\label{eq:Stokesev}
	\left\{
	\begin{split}
		&\varepsilon^{2}\big(\nabla\,\uu, \nabla\,\uv\big)  -( \dv\,\uv, p) && = \lambda( \uu,\uv ), & \forall\, \uv\in \uH^1_0(\Omega), \\
		&(\dv\,\uu, q )&& = 0, & \forall\, q\in L^2_0(\Omega).
	\end{split}
	\right.
\end{equation}

Note that the two pairs ($\uV_{h0}^{\rm sBDFM}-\mathbb{P}^1_{h0}$ and $\uV_{h0}^{\rm el}-\mathbb{P}^0_{h0}$) lead to same computed eigenvalues on same grids. Series of numerical experiments are carried out and the computed eigenvalues are recorded below. For every example, we show the domain and initial grid in the left, and a list of computed values of the six lowest eigenvalues in the right. For these examples, we choose $\varepsilon=1$.

\paragraph{\bf Example 1}~

\makeatletter\def\@captype{figure}\makeatother
\begin{minipage}{0.25\textwidth}
	\centering
	\begin{tikzpicture}[scale=0.7]
		\path coordinate (O) at (0,0)
		coordinate (B) at (2,-2)
		coordinate (C) at (2,2)
		coordinate (A) at (-2,-2)
		coordinate (D) at (-2,2)
		coordinate (E) at (-1,1)
		coordinate (F) at (1,-1);
		
		\path coordinate (A1) at ($ (A)!.25!(B) $)
		coordinate (A2) at ($ (A)!.5!(B) $)
		coordinate (A3) at ($ (A)!.75!(B) $);

		\path coordinate (B1) at ($ (B)!.25!(C) $)
		coordinate (B2) at ($ (B)!.5!(C) $)
		coordinate (B3) at ($ (B)!.75!(C) $);
		
		\path coordinate (C1) at ($ (C)!.25!(D) $)
		coordinate (C2) at ($ (C)!.5!(D) $)
		coordinate (C3) at ($ (C)!.75!(D) $);
		
		\path coordinate (D1) at ($ (D)!.25!(A) $)
		coordinate (D2) at ($ (D)!.5!(A) $)
		coordinate (D3) at ($ (D)!.75!(A) $);

		\draw[line width=.4pt]  (A) -- (B) -- (C) -- (D) -- cycle;
		
		\draw[line width=.4pt]  (D1) -- (B3) ;
		\draw[line width=.4pt]  (D2) -- (B2) ;
		\draw[line width=.4pt]  (D3) -- (B1) ;
		
		\draw[line width=.4pt]  (A1) -- (C3) ;
		\draw[line width=.4pt]  (A2) -- (C2) ;
		\draw[line width=.4pt]  (A3) -- (C1) ;	
		
		\draw[line width=.4pt]  (D2) -- (C2) ;
		\draw[line width=.4pt]  (D3) -- (C1) ;
		\draw[line width=.4pt]  (A) -- (C) ;
		\draw[line width=.4pt]  (A1) -- (B3) ;
		\draw[line width=.4pt]  (A2) -- (B2) ;
		
		\draw[line width=.4pt]  (D) -- (E) ;	
		\draw[line width=.4pt]  (B) -- (F) ;
	\end{tikzpicture}
	%\caption{initial mesh for a unit square}
\end{minipage}
\makeatletter\def\@captype{table}\makeatother
\begin{minipage}{.7\textwidth}
	\centering
	\setlength{\tabcolsep}{1.5mm}{
		\begin{tabular}{ccccccc}
			\hline
			% after \\: \hline or \cline{col1-col2} \cline{col3-col4} ...
			Mesh  & 0 & 1 & 2 & 3 & 4 & Trend \\
			\hline
			$\lambda_1$ & 66.4097  &  55.5965  &   53.1347  &  52.5407  & 52.3936  & $\searrow$  \\
			$\lambda_2$ & 123.5251 &  99.7536  &   94.0682  &  92.6136  & 92.2471  & $\searrow$  \\
			$\lambda_3$ & 137.3504 &  104.5997 &   95.1729  &  92.8802  & 92.3129  & $\searrow$  \\
			$\lambda_4$ & 165.0641 &  145.8915 &   132.8618 &  129.3819 & 128.5035 & $\searrow$  \\
			$\lambda_5$ & 201.2460 &  181.6767 &   161.1576 &  155.8845 & 154.5653 & $\searrow$  \\
			$\lambda_6$ & 203.7052 &  196.9708 &   174.6248 &  168.9307 & 167.5051 & $\searrow$  \\
			\hline
	\end{tabular}}
	%}
	%	\caption{unit square}
	\centering
\end{minipage}

\paragraph{\bf Example 2}~

\makeatletter\def\@captype{figure}\makeatother
\begin{minipage}{0.25\textwidth}
	\centering
	\begin{tikzpicture}[scale=0.7]
		\path coordinate (O) at (0,0)
		coordinate (B) at (270:2)
		coordinate (C) at (0:2)
		coordinate (E) at (90:2)
		coordinate (F) at (180:2)
		coordinate (A) at (-2,-2)
		coordinate (D) at (2,2);
		
		\path coordinate (A1) at ($ (A)!.125!(B) $)
		coordinate (A2) at ($ (A)!.25!(B) $)
		coordinate (A3) at ($ (A)!.375!(B) $)
		coordinate (A4) at ($ (A)!.5!(B) $)
		coordinate (A5) at ($ (A)!.625!(B) $)
		coordinate (A6) at ($ (A)!.75!(B) $)
		coordinate (A7) at ($ (A)!.875!(B) $);

		\path coordinate (B1) at ($ (B)!.125!(C) $)
		coordinate (B2) at ($ (B)!.25!(C) $)
		coordinate (B3) at ($ (B)!.375!(C) $)
		coordinate (B4) at ($ (B)!.5!(C) $)
		coordinate (B5) at ($ (B)!.625!(C) $)
		coordinate (B6) at ($ (B)!.75!(C) $)
		coordinate (B7) at ($ (B)!.875!(C) $);
		
		\path coordinate (C1) at ($ (C)!.125!(D) $)
		coordinate (C2) at ($ (C)!.25!(D) $)
		coordinate (C3) at ($ (C)!.375!(D) $)
		coordinate (C4) at ($ (C)!.5!(D) $)
		coordinate (C5) at ($ (C)!.625!(D) $)
		coordinate (C6) at ($ (C)!.75!(D) $)
		coordinate (C7) at ($ (C)!.875!(D) $);
		
		\path coordinate (D1) at ($ (D)!.125!(E) $)
		coordinate (D2) at ($ (D)!.25!(E) $)
		coordinate (D3) at ($ (D)!.375!(E) $)
		coordinate (D4) at ($ (D)!.5!(E) $)
		coordinate (D5) at ($ (D)!.625!(E) $)
		coordinate (D6) at ($ (D)!.75!(E) $)
		coordinate (D7) at ($ (D)!.875!(E) $);

		\path coordinate (E1) at ($ (E)!.125!(F) $)
		coordinate (E2) at ($ (E)!.25!(F) $)
		coordinate (E3) at ($ (E)!.375!(F) $)
		coordinate (E4) at ($ (E)!.5!(F) $)
		coordinate (E5) at ($ (E)!.625!(F) $)
		coordinate (E6) at ($ (E)!.75!(F) $)
		coordinate (E7) at ($ (E)!.875!(F) $);

		\path coordinate (F1) at ($ (F)!.125!(A) $)
		coordinate (F2) at ($ (F)!.25!(A) $)
		coordinate (F3) at ($ (F)!.375!(A) $)
		coordinate (F4) at ($ (F)!.5!(A) $)
		coordinate (F5) at ($ (F)!.625!(A) $)
		coordinate (F6) at ($ (F)!.75!(A) $)
		coordinate (F7) at ($ (F)!.875!(A) $);
		
		\draw[line width=.4pt]  (A) -- (B) -- (C) -- (D) -- (E) -- (F)-- cycle;
		\draw[line width=.4pt]  (A4) -- (E4) -- (C4) -- cycle;
		\draw[line width=.4pt]  (D4) -- (B4) -- (F4) -- cycle;
		
		\draw[line width=.4pt]  (C) -- (F) ;
		\draw[line width=.4pt]  (E) -- (B) ;
		\draw[line width=.4pt]  (A) -- (D) ;
	\end{tikzpicture}
	%\caption{initial mesh for a hexagon}
\end{minipage}
\makeatletter\def\@captype{table}\makeatother
\begin{minipage}{.7\textwidth}
	\centering
	\setlength{\tabcolsep}{1.5mm}{
		\begin{tabular}{ccccccc}
			\hline
			% after \\: \hline or \cline{col1-col2} \cline{col3-col4} ...
			Mesh  & 0 & 1 & 2 & 3 & 4  & Trend \\
			\hline
			$\lambda_1$  & 86.6443  & 83.3799   &  81.4757   &  80.9330   & 80.7931  & $\searrow$  \\
			$\lambda_2$  & 137.7299 & 113.2535  &  105.8261  &  103.8102  & 103.2968 & $\searrow$  \\
			$\lambda_3$  & 186.2746 & 177.2660  &  157.0575  &  151.3276  & 149.9012 & $\searrow$  \\
			$\lambda_4$  & 219.7048 & 179.7712  &  171.8289  &  170.2635  & 169.8594 & $\searrow$  \\
			$\lambda_5$  & 225.8015 & 216.8896  &  204.0614  &  199.9510  & 198.8708 & $\searrow$  \\
			$\lambda_6$  & 247.3904 & 269.6167  &  223.8862  &  211.9163  & 208.9613 & $\searrow$  \\
			\hline
	\end{tabular}}
	%	\caption{hexagon}
\end{minipage}

\paragraph{\bf Example 3}~

\makeatletter\def\@captype{figure}\makeatother
\begin{minipage}{0.25\textwidth}
	\centering
	\includegraphics[width=4cm]{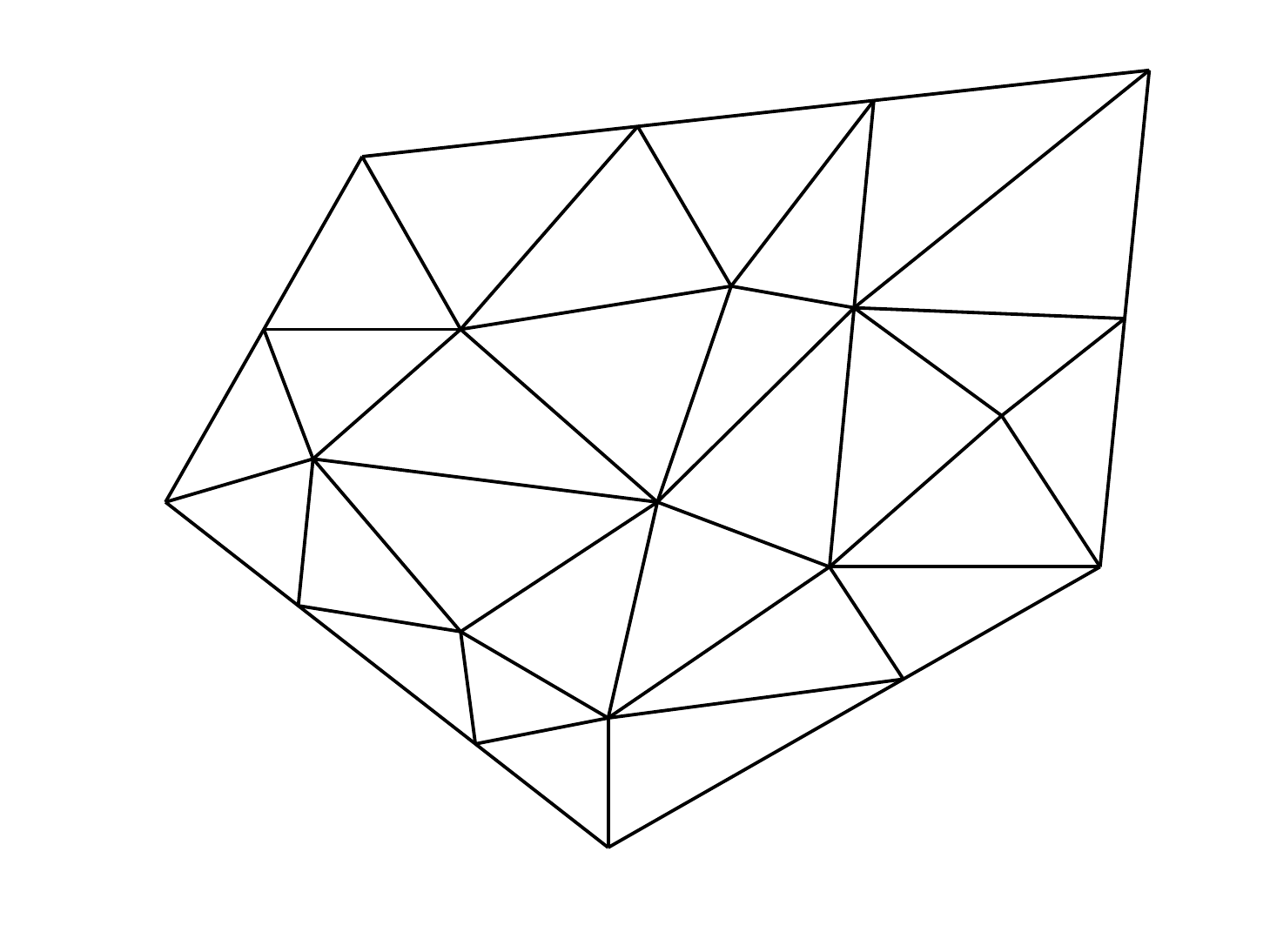}
	%\caption{initial mesh for a pentagon}
\end{minipage}
\makeatletter\def\@captype{table}\makeatother
\begin{minipage}{.7\textwidth}
	\centering
	\setlength{\tabcolsep}{1.5mm}{
		\begin{tabular}{ccccccc}
			\hline
			% after \\: \hline or \cline{col1-col2} \cline{col3-col4} ...
			Mesh  & 0 & 1 & 2 & 3 & 4 & Trend \\
			\hline
			$\lambda_1$ & 25.8121 &  23.3012 &  22.4095 &  22.1664 &  22.1039  & $\searrow$  \\
			$\lambda_2$ & 42.4798 &  37.8163 &  35.2351 &  34.5166 &  34.3322  & $\searrow$  \\
			$\lambda_3$ & 52.0032 &  46.2567 &  43.5630 &  42.8074 &  42.6114  & $\searrow$  \\
			$\lambda_4$ & 62.4579 &  61.7980 &  55.8809 &  54.1927 &  53.7558  & $\searrow$  \\
			$\lambda_5$ & 70.0038 &  66.4962 &  60.3462 &  58.6252 &  58.1810  & $\searrow$  \\
			$\lambda_6$ & 83.8312 &  82.6286 &  75.2565 &  72.9644 &  72.3525  & $\searrow$  \\
			\hline
	\end{tabular}}
	%	\caption{pentagon}
\end{minipage}

\paragraph{\bf Example 4}~

\makeatletter\def\@captype{figure}\makeatother
\begin{minipage}{0.26\textwidth}
	\centering
	\includegraphics[width=4.3cm]{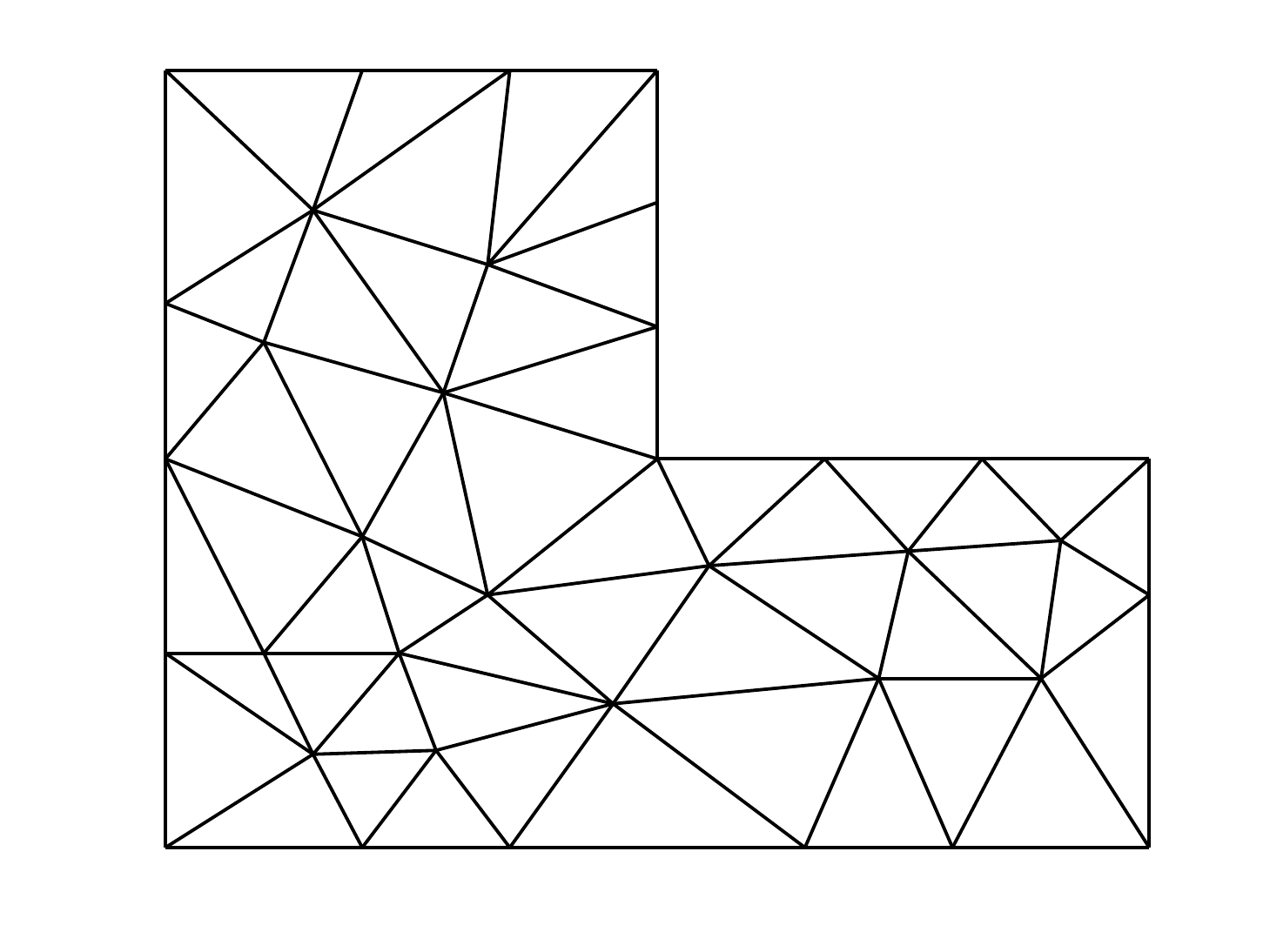}
	%\caption{initial mesh for a hexagon}
\end{minipage}
\makeatletter\def\@captype{table}\makeatother
\begin{minipage}{.7\textwidth}
	\centering
	\setlength{\tabcolsep}{1.5mm}{
		\begin{tabular}{ccccccc}
			\hline
			% after \\: \hline or \cline{col1-col2} \cline{col3-col4} ...
			Mesh  & 0 & 1 & 2 & 3 & 4 & Trend \\
			\hline
			$\lambda_1$ & 36.5520  &   33.5002  &   32.4349  &   32.1805  &   32.1302  & $\searrow$  \\
			$\lambda_2$ & 48.3991  &   39.8558  &   37.7611  &   37.2135  &   37.0697  & $\searrow$  \\
			$\lambda_3$ & 53.0517  &   45.0956  &   42.7302  &   42.1349  &   41.9870  & $\searrow$  \\
			$\lambda_4$ & 60.3236  &   53.6852  &   50.2220  &   49.2993  &   49.0633  & $\searrow$  \\
			$\lambda_5$ & 63.4514  &   60.6123  &   56.8284  &   55.7646  &   55.4969  & $\searrow$  \\
			$\lambda_6$ & 79.9487  &   76.4633  &   71.4214  &   69.9839  &   69.6178  & $\searrow$  \\
			\hline
	\end{tabular}}
	%	\caption{L-shape}
\end{minipage}

\paragraph{\bf Example 5}~

\makeatletter\def\@captype{figure}\makeatother
\begin{minipage}{0.26\textwidth}
	\centering
	\includegraphics[width=4.4cm]{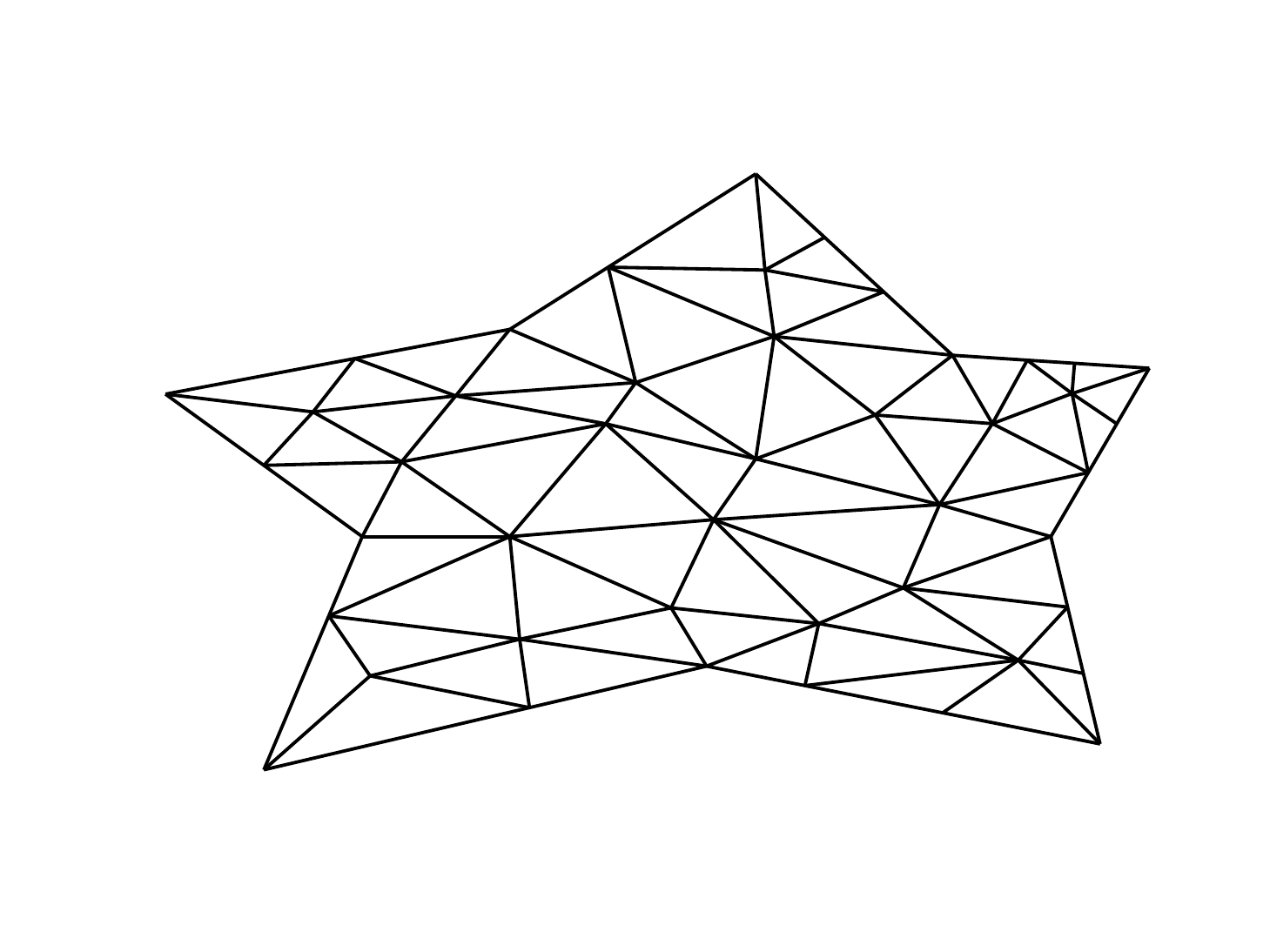}
	%\caption{initial mesh for a star shape}
\end{minipage}
\makeatletter\def\@captype{table}\makeatother
\begin{minipage}{.7\textwidth}
	\centering
	\setlength{\tabcolsep}{1.5mm}{
		\begin{tabular}{ccccccc}
			\hline
			% after \\: \hline or \cline{col1-col2} \cline{col3-col4} ...
			Mesh  & 0 & 1 & 2 & 3 & 4 & Trend \\
			\hline
			$\lambda_1$ & 27.0359 &  25.1845 &  24.5809 &  24.4196  & 24.3798 & $\searrow$  \\
			$\lambda_2$ & 48.0933 &  44.7486 &  42.9104 &  42.4146  & 42.2914 & $\searrow$  \\
			$\lambda_3$ & 51.0107 &  44.8693 &  43.1253 &  42.6689  & 42.5528 & $\searrow$  \\
			$\lambda_4$ & 73.8299 &  64.1797 &  60.4715 &  59.4812  & 59.2282 & $\searrow$  \\
			$\lambda_5$ & 82.0255 &  69.5011 &  65.2793 &  64.1854  & 63.9103 & $\searrow$  \\
			$\lambda_6$ & 92.5390 &  84.0946 &  78.2179 &  76.6211  & 76.2141 & $\searrow$  \\
			\hline
	\end{tabular}}
	%	\caption{star-shape}
\end{minipage}

It can be observed according to the experiments that
\begin{itemize}
	\item the computed eigenvalues converge to a limit in the speed of $\mathcal{O}(h^2)$;
	\item the computed eigenvalues decrease as the mesh is refined, which implies that the computed eigenvalues provide upper bounds of the exact eigenvalues. 
\end{itemize}

\section{Concluding remarks}
\label{sec:conc}

In this paper, a new conservative pair $\uV^{\rm el}_{h0}-\mathbb{P}^0_{h0}$ is established and shown stable for incompressible Stokes problem, and a numerical verification as in Appendix \ref{sec:app} illustrates that the $\uV^{\rm el}_{h0}-\mathbb{P}^0_{h0}$ pair is a lowest-degree one that is stable and conservative on general triangulations. The velocity component has an appearance of $H(\dv)$ element added with divergence-free bubble functions, and is comparable with ones given in, e.g., \cite{Madal.K;Tai.X;Winther.R2002,Xie;Xu;Xue2008,Guzman.J;Neilan.M2012}. However, the finite element space for velocity does not correspond to a Ciarlet's triple, and the construction and theoretical analysis can not be carried out in a usual way. The main technical ingredient is then to use an indirect approach by constructing and utilizing an auxiliary pair $\uV_{h0}^{\rm sBDFM}-\mathbb{P}^1_{h0}$.
~\\

The auxiliary pair $\uV_{h0}^{\rm sBDFM}-\mathbb{P}^1_{h0}$ is constructed by reducing $H(\dv)$ finite element spaces which was firstly adopted in \cite{Zeng.H;Zhang.C;Zhang.S2021}. It is interesting to notice that, the sBDFM element has the same nodal parameters as ones given in \cite{Madal.K;Tai.X;Winther.R2002,Xie;Xu;Xue2008} (the lowest-degree) and \cite{Guzman.J;Neilan.M2012} (the lowest-degree), but it uses the lowest-degree polynomials among these four, and only the sBDFM element space can accompany the piecewise linear polynomial space to form a stable pair, while the other three can only accompany the piecewise constant space. 
~\\

Besides, for conservative pairs in three-dimension, we refer to, e.g., \cite{Guzman;Neilan2018,Zhang3D2005,ZhangPS2011} where composite grids are required, as well as \cite{Guzman;Neilan2013} and \cite{Zhang3D2011} where high degree local polynomials are utilized. We refer to \cite{Chen;Dong;Qiao2013,Huang;Zhang2011,ZhangSY2009} for pairs on  rectangular grids and \cite{Neilan;Sap2016} for ones on cubic grids where full advantage of the geometric symmetry of the cells are taken. The approaches given in \cite{Zeng.H;Zhang.C;Zhang.S2021} and the present paper can be generalized to higher dimensions and non-simplicial grids. This will be discussed in future. 
~\\

Finally, it is worthy of noticing that, the finite element schemes given in the present paper, when used for the Stokes eigenvalue problem, can provide upper bounds for the exact eigenvalues. It has not been reported in the literature that nonconforming finite element schemes may provide upper bounds for the Stokes eigenvalue problem. In this paper, this unexpected phenomenon is illustrated by plenty of numerical experiments. Theoretical and further numerical investigation will be carried out in future.

%\section*{Acknowledgements} 
%This work was supported by National Natural Science Foundation of China (Grant No. 11871465) and the Strategic Priority Research Program of Chinese Academy of Sciences (Grant No. XDB 41000000).

%
%
%
\appendix

\section{A most natural linear--constant pair is not stable: a numerical verification}\label{sec:app}
In this section, we show by numerics the $\uV^1_{h0}-\mathbb{P}^0_{h0}$, defined in Remark \ref{rem:vp1}, is not stable on general triangulations, whereas
\begin{equation}\label{eq:degenerate}
	\displaystyle	\inf_{q_h\in \dv\,\uV^1_{h0}}\sup_{\uv_h\in \uV^1_{h0}}\frac{(\dv\,\uv_h,q_h)}{\|q_h\|_{0,\Omega}|\uv_h|_{1,h}}= \mathcal{O}(h)
\end{equation}
on a specific kind of triangulations. 
\subsection{A special triangulation and finite element space}
We consider the computational domain 
$\Omega= (0,1)\times(0,1) \setminus ( \{(x,y) : 0 \leqslant x \leqslant \frac12, x + \frac12 \leqslant y \leqslant 1\}
\cup \{ (x,y) : \frac12 \leqslant x \leqslant 1, 0 \leqslant y \leqslant x - \frac12 \})$. 
The initial triangulation is shown in Figure \ref{fig:gridP1P0}(left), and a sequence of triangulations are obtained by refining it uniformly(cf. Figure \ref{fig:gridP1P0}(right)).  
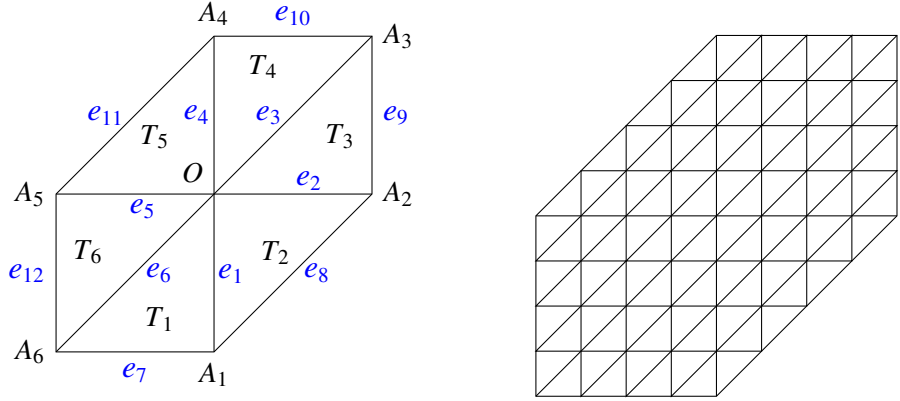
\begin{figure}[thbp]
	\centering
	\begin{tikzpicture}[scale=0.7]
		\path coordinate (O) at (0,0)
		coordinate (A1) at (-90:3)
		coordinate (A2) at (0:3)
		coordinate (A3) at (3,3)
		coordinate (A4) at (90:3)
		coordinate (A5) at (180:3)
		coordinate (A6) at (-3,-3);
		
		\node [above left,font=\small] at (O) {$O$};
		\node [below,font=\small] at (A1) {$A_1$};
		\node [right,font=\small] at (A2) {$A_2$};
		\node [right,font=\small] at (A3) {$A_3$};
		\node [above,font=\small] at (A4) {$A_4$};
		\node [left,font=\small] at (A5) {$A_5$};
		\node [left,font=\small] at (A6) {$A_6$};
		
		\draw[line width=.4pt]  (A1) -- (A4) ;
		\draw[line width=.4pt]  (A2) -- (A5) ;
		\draw[line width=.4pt]  (A3) -- (A6) ;
		\draw[line width=.4pt]  (A1) -- (A2) -- (A3) -- (A4) -- (A5) -- (A6) -- cycle ;
		
		\path coordinate (E1) at ($(A1)!.5!5:(O)$)
		coordinate (E2) at ($(A2)!.4!6:(O)$)
		coordinate (E3) at ($(A3)!.5!(O)$)
		coordinate (E4) at ($(A4)!.5!5:(O)$)
		coordinate (E5) at ($(A5)!.55!4:(O)$)
		coordinate (E6) at ($(A6)!.5!(O)$)
		coordinate (E7) at ($(A6)!.5!(A1)$)
		coordinate (E8) at ($(A1)!.5!(A2)$)
		coordinate (E9) at ($(A2)!.5!(A3)$)
		coordinate (E10) at ($(A3)!.5!(A4)$)
		coordinate (E11) at ($(A4)!.5!(A5)$)
		coordinate (E12) at ($(A5)!.5!(A6)$);
		
		\node [right,blue] at (E1) {$ e_1 $};
		\node [above,blue] at (E2) {$ e_2 $};
		\node [left,blue] at (E3) {$ e_3 $};
		\node [left,blue] at (E4) {$ e_4 $};
		\node [below,blue] at (E5) {$ e_5 $};
		\node [right,blue] at (E6) {$ e_6 $};
		\node [below,blue] at (E7) {$ e_7 $};
		\node [right,blue] at (E8) {$ e_8 $};
		\node [right,blue] at (E9) {$ e_9 $};
		\node [above,blue] at (E10) {$ e_{10} $};
		\node [left,blue] at (E11) {$ e_{11} $};
		\node [left,blue] at (E12) {$ e_{12} $};

		\path coordinate (T1) at ($(O)!2.2cm!-28:(A1)$)
		coordinate (T2) at ($(O)!1.35cm!-30:(A2)$)
		coordinate (T3) at ($(O)!2.2cm!-15:(A3)$)
		coordinate (T4) at ($(O)!2.2cm!-25:(A4)$)
		coordinate (T5) at ($(O)!1.3cm!-30:(A5)$)
		coordinate (T6) at ($(O)!2.2cm!-15:(A6)$);
		
		\node[below,font=\small] at (T1) {$ T_1 $};
		\node[below,font=\small] at (T2) {$ T_2 $};
		\node[right,font=\small] at (T3) {$ T_3 $};
		\node[above,font=\small] at (T4) {$ T_4 $};
		\node[above,font=\small] at (T5) {$ T_5 $};
		\node[left,font=\small] at (T6) {$ T_6 $};
	\end{tikzpicture}
	\hspace{0.5in}
	\begin{tikzpicture}[scale=0.6]
		\path coordinate (O) at (0,0)
		coordinate (B) at (270:4)
		coordinate (C) at (0:4)
		coordinate (E) at (90:4)
		coordinate (F) at (180:4)
		coordinate (A) at (-4,-4)
		coordinate (D) at (4,4);
		
		\path coordinate (A1) at ($ (A)!.125!(B) $)
		coordinate (A2) at ($ (A)!.25!(B) $)
		coordinate (A3) at ($ (A)!.375!(B) $)
		coordinate (A4) at ($ (A)!.5!(B) $)
		coordinate (A5) at ($ (A)!.625!(B) $)
		coordinate (A6) at ($ (A)!.75!(B) $)
		coordinate (A7) at ($ (A)!.875!(B) $);

		\path coordinate (B1) at ($ (B)!.125!(C) $)
		coordinate (B2) at ($ (B)!.25!(C) $)
		coordinate (B3) at ($ (B)!.375!(C) $)
		coordinate (B4) at ($ (B)!.5!(C) $)
		coordinate (B5) at ($ (B)!.625!(C) $)
		coordinate (B6) at ($ (B)!.75!(C) $)
		coordinate (B7) at ($ (B)!.875!(C) $);
		
		\path coordinate (C1) at ($ (C)!.125!(D) $)
		coordinate (C2) at ($ (C)!.25!(D) $)
		coordinate (C3) at ($ (C)!.375!(D) $)
		coordinate (C4) at ($ (C)!.5!(D) $)
		coordinate (C5) at ($ (C)!.625!(D) $)
		coordinate (C6) at ($ (C)!.75!(D) $)
		coordinate (C7) at ($ (C)!.875!(D) $);
		
		\path coordinate (D1) at ($ (D)!.125!(E) $)
		coordinate (D2) at ($ (D)!.25!(E) $)
		coordinate (D3) at ($ (D)!.375!(E) $)
		coordinate (D4) at ($ (D)!.5!(E) $)
		coordinate (D5) at ($ (D)!.625!(E) $)
		coordinate (D6) at ($ (D)!.75!(E) $)
		coordinate (D7) at ($ (D)!.875!(E) $);

		\path coordinate (E1) at ($ (E)!.125!(F) $)
		coordinate (E2) at ($ (E)!.25!(F) $)
		coordinate (E3) at ($ (E)!.375!(F) $)
		coordinate (E4) at ($ (E)!.5!(F) $)
		coordinate (E5) at ($ (E)!.625!(F) $)
		coordinate (E6) at ($ (E)!.75!(F) $)
		coordinate (E7) at ($ (E)!.875!(F) $);

		\path coordinate (F1) at ($ (F)!.125!(A) $)
		coordinate (F2) at ($ (F)!.25!(A) $)
		coordinate (F3) at ($ (F)!.375!(A) $)
		coordinate (F4) at ($ (F)!.5!(A) $)
		coordinate (F5) at ($ (F)!.625!(A) $)
		coordinate (F6) at ($ (F)!.75!(A) $)
		coordinate (F7) at ($ (F)!.875!(A) $);
		
		\draw[line width=.4pt]  (A) -- (B) -- (C) -- (D) -- (E) -- (F)-- cycle;
		\draw[line width=.4pt]  (A2) -- (E6) -- (C2) -- (A6) -- (E2) -- (C6) -- cycle;
		\draw[line width=.4pt]  (A4) -- (E4) -- (C4) -- cycle;
		
		\draw[line width=.4pt]  (D2) -- (B6) -- (F2) -- (D6) -- (B2) -- (F6) -- cycle;
		\draw[line width=.4pt]  (D4) -- (B4) -- (F4) -- cycle;
		
		\draw[line width=.4pt]  (C) -- (F) ;
		\draw[line width=.4pt]  (E) -- (B) ;
		\draw[line width=.4pt]  (A) -- (D) ;
	\end{tikzpicture}
	\caption{Left: the initial grid or a 6-cell patch. Right: the grid after twice refinement}\label{fig:gridP1P0}
\end{figure}

Given a patch $P_O$ as shown in Figure \ref{fig:gridP1P0}(left), denote by $ \uV_{h0}^1(P_O)= {\rm span} \{ \uphi_1^O, \uphi_2^O, \uphi_3^O \}$ and denote for $i=1:6, \ \uV_{h0}^1(T_i)= {\rm span} \{ \uphi_{T_i}^1, \uphi_{T_i}^2, \uphi_{T_i}^3 \}$. 
Specifically, 
$\uphi^O_s|_{T_i}=\uphi_{T_i}^s, s=1:2,i=1:6$ and
\begin{equation}
	%\label{}
	\uphi^O_3 =
	\left\{
	\begin{aligned}
		& \uphi_{T_1}^1 - 2 \uphi_{T_1}^2 + \uphi_{T_1}^2 , \ in \ T_1;
		& \uphi_{T_2}^1 -  \uphi_{T_2}^2 - \uphi_{T_2}^2 , \ in \ T_2;\\
		& 2 \uphi_{T_3}^1 - \uphi_{T_3}^2 + \uphi_{T_3}^2 , \ in \ T_3;
		& \uphi_{T_4}^1 - 2 \uphi_{T_4}^2 + \uphi_{T_4}^2 , \ in \ T_4;\\
		& \uphi_{T_5}^1 - \uphi_{T_5}^2 - \uphi_{T_5}^2 , \ in \ T_5;
		& 2 \uphi_{T_6}^1 - \uphi_{T_6}^2 + \uphi_{T_6}^2 , \ in \ T_6;\\
	\end{aligned}
	\right.
\end{equation}
where for $i=1:6$, 
$
\uphi_{T_i}^1=
\left(
\begin{array}{c}
	\lambda_0 \\
	0 \\
\end{array}
\right)
$, 
$
\uphi_{T_i}^2=
\left(
\begin{array}{c}
	0 \\
	\lambda_0 \\
\end{array}
\right)
$,
and 
$
\uphi_{T_1}^3=
\left(
\begin{array}{c}
	\lambda_6-\lambda_1\\
	0
\end{array}
\right)
$, 
$
\uphi_{T_2}^3=
\left(
\begin{array}{c}
	\lambda_1-\lambda_2\\
	\lambda_1-\lambda_2
\end{array}
\right)
$, 
$
\uphi_{T_3}^3=
\left(
\begin{array}{c}
	0\\
	\lambda_2-\lambda_3
\end{array}
\right)
$, 
$
\uphi_{T_4}^3=
\left(
\begin{array}{c}
	\lambda_3-\lambda_4\\
	0
\end{array}
\right)
$, 
$
\uphi_{T_5}^3=
\left(
\begin{array}{c}
	\lambda_4-\lambda_5\\
	\lambda_4-\lambda_5
\end{array}
\right)
$, 
$
\uphi_{T_6}^3=
\left(
\begin{array}{c}
	0\\
	\lambda_5-\lambda_6
\end{array}
\right)
$.

Similar to Lemma \ref{lemma:kernel basis}, we can show the lemma below. 

\begin{lemma}
	$dim(\undertilde{V}_{h0}^{1})=3 \# \mathcal{X}_h^i$ 
	and 
	$\uV^1_{h0}={\rm span}\{ \uphi_1^A, \uphi_2^A, \uphi_3^A, A \in \mathcal{X}_h^i \}$.
\end{lemma}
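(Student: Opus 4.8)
The plan is to mirror the proofs of Lemma~\ref{lemma:kernel basis} and of the basis lemma for $\uV^{\rm el}_{h0}$: first confirm that the proposed functions lie in $\uV^1_{h0}$ and span the local patch spaces, then prove their global linear independence by a boundary-to-interior sweep, and finally recover the spanning property (and with it the dimension formula) through a divergence argument. The two asserted facts then follow together, since spanning plus independence of a set of $3\#\mathcal{X}^i_h$ functions simultaneously fixes $\dim\uV^1_{h0}$ and exhibits a basis.

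First I would check membership and set up the local picture. For each interior vertex $O$, the functions $\uphi^O_1=\lambda_O(1,0)^{\top}$ and $\uphi^O_2=\lambda_O(0,1)^{\top}$, with $\lambda_O$ the nodal hat function at $O$, are the usual vector Lagrange functions: globally continuous, supported on $P_O$, and vanishing on every boundary edge (as $O\in\mathcal{X}^i_h$), so they satisfy the constraints of $\uV^1_{h0}$. The coefficients entering $\uphi^O_3$ are chosen precisely so that its normal trace is continuous across the interior edges of $P_O$ and its tangential mean matches there, while all components vanish on $\partial P_O$ (where $\lambda_O=0$). Thus $\{\uphi^O_1,\uphi^O_2,\uphi^O_3\}\subset\uV^1_{h0}(P_O)$, and one records the local fact that this patch space is exactly three-dimensional, i.e. $\uV^1_{h0}(P_O)=\mathrm{span}\{\uphi^O_1,\uphi^O_2,\uphi^O_3\}$.

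Next comes the linear independence, which follows the sweep of Lemma~\ref{lemma:kernel basis}. Assuming $\sum_{A\in\mathcal{X}^i_h}\sum_{j=1}^{3} c^A_j\,\uphi^A_j=0$, I would first establish a local statement analogous to Lemma~\ref{independence on boundary-cell} and Remark~\ref{rem:cell-indep}: on any cell $T_0$ carrying a boundary vertex, the restrictions to $T_0$ of the basis functions attached to the interior vertices of $T_0$ are linearly independent. Such a $T_0$ meets at most two interior vertices, so at most six restrictions occur, matching $\dim\uP_1(T_0)=6$, and one verifies nonsingularity of the resulting $6\times 6$ coefficient matrix written in a local basis of $\uP_1(T_0)$. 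Starting from the cells touching $\mathcal{X}^b_h$, this forces the coefficients at the adjacent interior vertices to vanish; iterating over the vertex layers $\mathcal{X}^{b,+k}_h$, and using Assumption~\ref{assumption grid} to guarantee the sweep reaches every cell, exhausts all of $\mathcal{X}^i_h$, so each $c^A_j=0$.

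Finally I would prove $\uV^1_{h0}\subseteq\mathrm{span}\{\uphi^A_j\}$ by the divergence argument used for $\uV^{\rm el}_{h0}$: since the span is contained in $\uV^1_{h0}$, it suffices to show the two spaces have the same image under $\dv$ and that every divergence-free element of $\uV^1_{h0}$ already lies in the span; then splitting an arbitrary $\uv_h\in\uV^1_{h0}$ as a span element with the same divergence plus a divergence-free remainder places $\uv_h$ in the span. Combined with the independence above, this gives $\dim\uV^1_{h0}=3\#\mathcal{X}^i_h$ and identifies $\{\uphi^A_j\}$ as a basis. I expect this last step to be the main obstacle: unlike in Lemma~\ref{lemma:kernel basis}, the pair $\uV^1_{h0}-\mathbb{P}^0_{h0}$ is not stable, so $\dv$ is not onto $\mathbb{P}^0_{h0}$ and no finite-element exact sequence delivers $\dim\uV^1_{h0}$ for free; the image of $\dv$ and the divergence-free subspace of $\uV^1_{h0}$ must instead be characterized patchwise and shown to be reproduced by the $\uphi^A_j$, much as \eqref{eq:repre} reproduces $\upsi^O$ from the edge- and cell-functions.
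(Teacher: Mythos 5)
You should first be aware that the paper does not actually write a proof of this lemma: its entire justification is the sentence ``Similar to Lemma \ref{lemma:kernel basis}, we can show the lemma below,'' so the only thing to compare against is the template of Lemma \ref{lemma:kernel basis}. Your independence half transcribes that template correctly, and in fact more easily than in Lemma \ref{lemma:kernel basis}: every candidate function here is supported in a single vertex patch, so on a cell carrying a boundary vertex only the (at most six) functions attached to its interior vertices survive, and the layer-by-layer sweep over $\mathcal{X}_h^{b,+k}$ closes once the finitely many local $3\times 3$ or $6\times 6$ coefficient matrices of this structured grid are checked to be nonsingular. That part is sound and is surely what the authors intend.

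The genuine gap is the spanning (equivalently, upper-bound) step, which you have correctly located but not closed. In Lemma \ref{lemma:kernel basis} spanning is free once independence is known because $\dim(\uZ_{h0})$ is computed externally from the exact sequence: $\dv$ maps $\uV^{\rm sBDFM}_{h0}$ onto $\mathbb{P}^1_{h0}$ by Theorem \ref{thm:infsupsBDFM}. No such external formula exists for $\uV^1_{h0}$: the naive count (six unknowns per cell minus three constraints per edge) gives only $\dim(\uV^1_{h0})\geqslant 3\#\mathcal{X}_h^i-3$, so independence of your $3\#\mathcal{X}_h^i$ functions yields a lower bound and nothing more; the content of the lemma is precisely that the constraints carry exactly three dependencies. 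Your proposed remedy --- match divergences, then absorb a divergence-free remainder --- is logically valid but both ingredients are left as assertions: you would need to identify $\dv\,\uV^1_{h0}$ (which the appendix itself treats as possibly proper in $\mathbb{P}^0_{h0}$, since the infimum in \eqref{eq:degenerate} is taken over $\dv\,\uV^1_{h0}$ only) and show it coincides with $\dv$ of your span, and you would need to show that every piecewise-linear member of $\uZ_{h0}$ lies in your span, even though the known basis $\{\upsi^A,\upsi_T\}$ of $\uZ_{h0}$ is built from functions that are genuinely quadratic on each cell, so the admissible coefficient combinations must first be characterized. Neither step is routine, and without them the second assertion of the lemma, hence also the dimension formula, is not established. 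A more direct route, closer in spirit to the sweep you already use, is a determination sweep: show that for each boundary-touching cell $T$ the homogeneous boundary conditions force $\uv_h|_T$ into ${\rm span}\{\uphi^A_j|_T:\ A\in\mathcal{X}_h^i\cap\overline T,\ j=1:3\}$, subtract the corresponding global combination, and iterate inward over the layers; but this requires a local surjectivity statement on each such cell, which your write-up also does not supply.
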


\subsection{Numerical verification of the inf-sup constant}
By Courant's min-max theorem, it is easy to show the lemma below. 
\begin{lemma}
	With respect to any set of basis functions of $\uV^1_{h0}$ and $\mathbb{P}^0_h$, denote by $A$ the stiffness matrix of $(\nabla_h\,\cdot,\nabla_h\,\cdot)$ on $\uV^1_{h0}$, by $M$ the mass matrix on $\uV^1_{h0}$, and by $B$ the stiffness matrix of $(\dv\,\cdot,\cdot)$ on $\uV^1_{h0}\times \mathbb{P}^0_h$. Then
	$$
	\inf_{q_h\in \dv\,\uV^1_{h0}}\sup_{\uv_h\in \uV^1_{h0}}\frac{(\dv\,\uv_h,q_h)}{\|q_h\|_{0,\Omega}|\uv_h|_{1,h}}= \lambda^+_{\min},
	$$ 
	where $\lambda^+_{\min}$ is the smallest positive eigenvalue of the matrix eigenvalue problem $ BA^{-1}B^{T} {\rm v} = {\rm \lambda} M  {\rm v}$.
\end{lemma}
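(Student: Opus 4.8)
The plan is to convert the constrained $\inf$--$\sup$ into a finite-dimensional generalized Rayleigh quotient and then invoke the Courant--Fischer (min--max) theorem, as the statement already suggests. First I would fix bases of $\uV^1_{h0}$ and of $\mathbb{P}^0_h$ and identify $\uv_h$, $q_h$ with their coefficient vectors $\mathbf{u}$, $\mathbf{q}$. With the matrices named in the statement this gives $(\dv\,\uv_h,q_h)=\mathbf{q}^\top B\,\mathbf{u}$, $\;|\uv_h|_{1,h}^2=\mathbf{u}^\top A\,\mathbf{u}$, and $\|q_h\|_{0,\Omega}^2=\mathbf{q}^\top M\,\mathbf{q}$, where $A$ is SPD and $M$ is the SPD mass matrix on $\mathbb{P}^0_h$ (the object for which the pencil $BA^{-1}B^\top\mathbf{v}=\lambda M\mathbf{v}$ is dimensionally consistent).

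Next I would evaluate the inner supremum for a fixed $q_h$. Maximizing $\mathbf{q}^\top B\,\mathbf{u}/(\mathbf{u}^\top A\,\mathbf{u})^{1/2}$ over $\mathbf{u}$ is a Cauchy--Schwarz problem in the $A$-inner product: the maximizer is $\mathbf{u}=A^{-1}B^\top\mathbf{q}$ and the value is $(\mathbf{q}^\top BA^{-1}B^\top\mathbf{q})^{1/2}$. Writing $S:=BA^{-1}B^\top$, the left-hand side therefore equals $\bigl(\inf_{q_h\in\dv\,\uV^1_{h0}}\mathbf{q}^\top S\,\mathbf{q}/\mathbf{q}^\top M\,\mathbf{q}\bigr)^{1/2}$, a generalized Rayleigh quotient of the symmetric positive semidefinite pencil $(S,M)$, with $\mathbf{q}$ restricted to the coefficient vectors that represent $\dv\,\uV^1_{h0}$.

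The decisive step is to match this constraint set with the positive-eigenvalue subspace of the pencil. Since $A^{-1}$ is SPD, $\mathrm{range}(S)=\mathrm{range}(B)$ and $\ker S=\ker B^\top$, so the zero eigenvalue of $S\mathbf{v}=\lambda M\mathbf{v}$ has eigenspace $\ker B^\top$ while the positive eigenvalues are carried by its $M$-orthogonal complement $\{\mathbf{q}:\mathbf{q}^\top M\mathbf{z}=0,\ \forall\,\mathbf{z}\in\ker B^\top\}=M^{-1}\mathrm{range}(B)$. On the other hand, because $\dv\,\uV^1_{h0}\subset\mathbb{P}^0_h$, the relation $q_h=\dv\,\uv_h$ reads $M\mathbf{q}=B\,\mathbf{u}$ in coefficients, so the coefficient vectors of $\dv\,\uV^1_{h0}$ are exactly $M^{-1}\mathrm{range}(B)$. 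Hence the infimum runs precisely over the span of the positive-eigenvalue generalized eigenvectors, and Courant--Fischer identifies it with the smallest positive eigenvalue $\lambda^+_{\min}$ of $BA^{-1}B^\top\mathbf{v}=\lambda M\mathbf{v}$. This is the generalized-eigenvalue characterization asserted (with the inf--sup constant recovered as $(\lambda^+_{\min})^{1/2}$ after the outer square root of the Rayleigh quotient is taken).

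I expect the main obstacle to be exactly this last identification: verifying that the admissible pressures $q_h\in\dv\,\uV^1_{h0}$ correspond, through the mass matrix, to precisely the $M$-orthogonal complement of $\ker S$. This is what discards the spurious zero eigenvalue and forces the infimum onto $\lambda^+_{\min}$ rather than $0$; once it is in place, the remaining manipulations are routine linear algebra.
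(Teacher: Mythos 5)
Your proposal is correct and is precisely the argument the paper leaves implicit: the paper's entire ``proof'' is the single sentence ``By Courant's min--max theorem, it is easy to show the lemma below,'' and your three steps (Cauchy--Schwarz in the $A$-inner product to evaluate the inner supremum, reduction to the generalized Rayleigh quotient of the pencil $(BA^{-1}B^{\top},M)$, and the identification of the admissible set $\dv\,\uV^1_{h0}$ with $M^{-1}\mathrm{range}(B)$, i.e.\ the $M$-orthogonal complement of $\ker(BA^{-1}B^{\top})$) are exactly the routine elaboration of that sentence. The one point you should not bury in a parenthetical: your derivation yields $\inf\sup = (\lambda^+_{\min})^{1/2}$, whereas the lemma as printed asserts equality with $\lambda^+_{\min}$ itself; your computation is the correct one, so either the lemma's statement is off by a square root or you must say explicitly that what you have proven is the square-root identity rather than ``the characterization asserted.''
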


The maximum eigenvalue of the proposed eigenvalue problem is denoted by $\lambda_{\max}$. Table \ref{table:lambda_min} displays the computed values of $\lambda^+_{\min}$ and $\lambda_{\max}$ on a series of refined grids. And Figure \ref{fig:decay_loglog} illustrates that $\lambda^+_{\min}$ degenerates in the rate of $\mathcal{O}(h)$. This verifies \eqref{eq:degenerate} numerically. 

\makeatletter\def\@captype{table}\makeatother
\begin{minipage}{.4\textwidth}
	\centering
	\setlength{\tabcolsep}{2.5mm}{
		\begin{tabular}{cccc}
			\hline
			% after \\: \hline or \cline{col1-col2} \cline{col3-col4} ...
			h & $\lambda_{\min}^+$ & Rate & $\lambda_{\max}$ \\
			\hline
			1/2  & 0.2232 & -      & 1.3822  \\
			1/4  & 0.1235 & 0.8538 & 1.4081  \\
			1/8  & 0.0636 & 0.9574 & 1.4131  \\
			1/16 & 0.0321 & 0.9865 & 1.4140  \\
			1/32 & 0.0161 & 0.9955 & 1.4142  \\
			1/64 & 0.0081 & 0.9911 & 1.4142  \\
			\hline
	\end{tabular}}
	\caption{Computed values of $\lambda_{\min}^+$ and $\lambda_{\max}$}\label{table:lambda_min}
\end{minipage}
\makeatletter\def\@captype{figure}\makeatother
\begin{minipage}{0.5\textwidth}
	\centering
	\includegraphics[width=6cm]{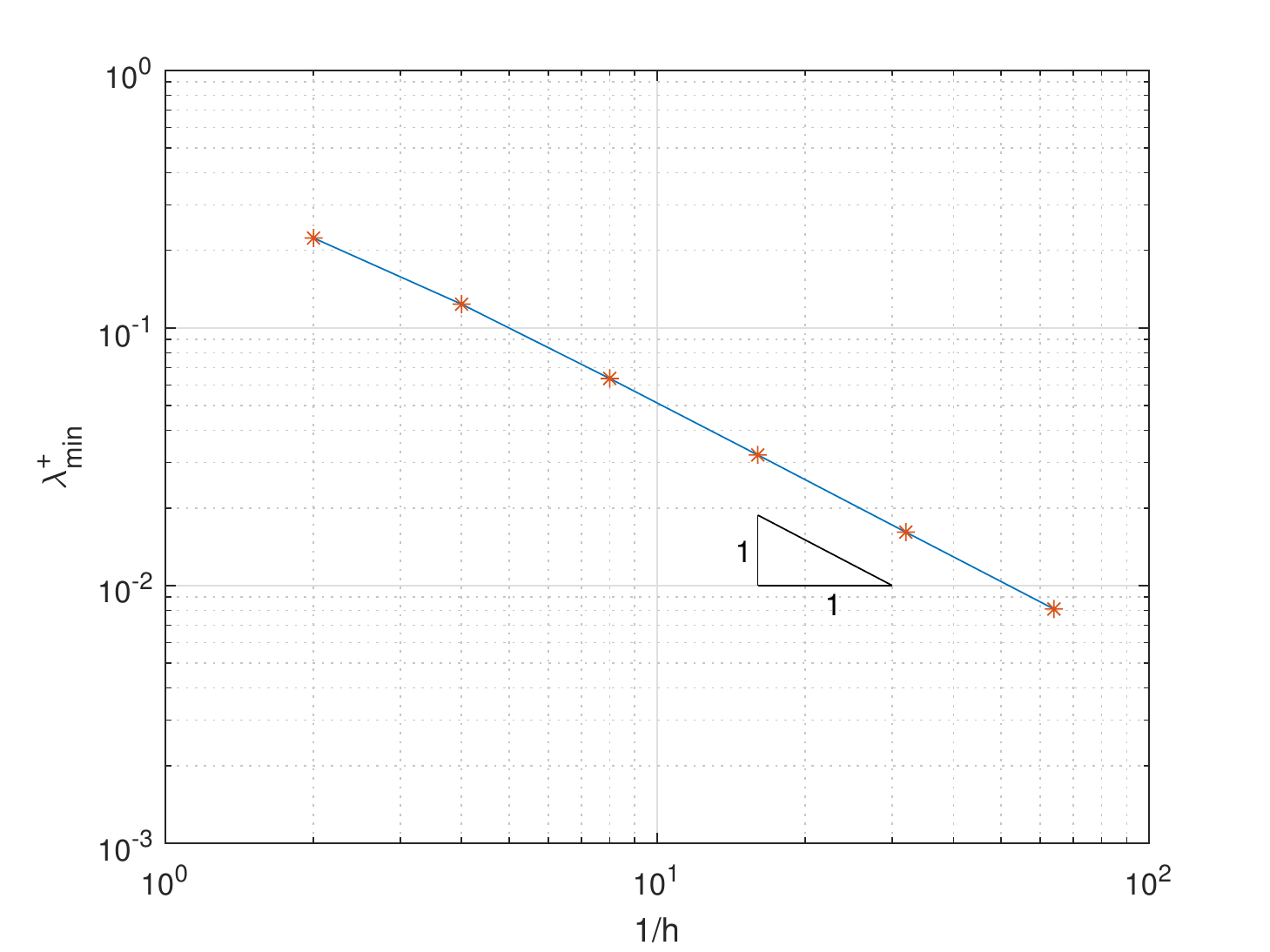}
	\caption{$\lambda_{min}^+$ decays along with mesh refinement}\label{fig:decay_loglog}
\end{minipage}
\bibliography{p1+p0}
\bibliographystyle{plain}
\end{document}